\newcolumntype{P}[1]{>{\centering\arraybackslash}p{#1}}
\newtheorem{theorem}{Theorem}[section]
\newtheorem{lemma}[theorem]{Lemma}
\newtheorem{proposition}[theorem]{Proposition}
\newtheorem{corollary}[theorem]{Corollary}
\newtheorem{theoremx}{Theorem}
\theoremstyle{definition}
\newtheorem{remark}[theorem]{Remark}
\newtheorem{notation}[theorem]{Notation}
\newtheorem{example}[theorem]{Example}
\newtheorem{definition}[theorem]{Definition}
\newtheorem{question}[theorem]{Question}
\newcommand{\N}{\mathbb{N}}
\newcommand{\Q}{\mathbb{Q}}
\newcommand{\KK}{\Bbbk}
\newcommand{\HK}{\mathrm{HK}}
\newcommand{\FS}{\mathrm{FS}}
\newcommand{\FSS}{\mathrm{FSS}}
\newcommand{\fs}{\mathrm{s}}
\def\u{{\bf u}}
\def\v{{\bf v}}
\def\a{{\bf a}}
\def\b{{\bf b}}
\def\c{{\bf c}}
\DeclareMathOperator{\frk}{frk}
\def\l@subsection{\@tocline{2}{0pt}{2.5pc}{5pc}{}}
\def\author@andify{%
	\nxandlist {\unskip ,\penalty-1 \space\ignorespaces}%
	{\unskip {} \@@and~}%
	{\unskip \penalty-2  ,~}%
}
\author{Alessio Caminata}
\address{Alessio Caminata, Dipartimento di Matematica, Dipartimento di Eccellenza 2023-2027, Universit\`a di Genova\\ via Dodecaneso 35, 16146, Genova, Italy}
\email{alessio.caminata@unige.it}
\author{Samuel Shideler}
\address{Samuel Shideler}
\email{sjshide@gmail.com}
\author{Kevin Tucker}
\address{Kevin Tucker, Department of Mathematics, University of Illinois at Chicago, Chicago, IL 60607, USA}
\email{kftucker@uic.edu}
\author{Francesco Zerman}
\address{Francesco Zerman, Dipartimento di Matematica, Dipartimento di Eccellenza 2023-2027, Universit\`a di Genova\\ via Dodecaneso 35, 16146, Genova, Italy}
\email{zerman@dima.unige.it}
\title[F-signature functions of diagonal hypersurfaces]{F-signature functions of diagonal hypersurfaces}
\subjclass{13A35, 13D40, 14G17, 14B05}
\keywords{positive characteristic, Hilbert-Kunz, F-signature, p-fractals.}
\begin{document}
	
	\maketitle
	
	\begin{abstract}
		Let $f$ be a diagonal hypersurface in $A_p=\mathbb{F}_p\llbracket x_1,\dots,x_n\rrbracket$. We study the behavior of the function $\phi_{f,p}({a}/{p^e})=p^{-ne}\dim_{\KK}\big(A_p/(x_1^{p^e},\dots,x_n^{p^e},f^a)\big)$ which encodes information about the F-threshold, the Hilbert-Kunz, and the F-signature functions. We prove that when $p$ goes to infinity $\phi_{f,p}$ converges to a piecewise polynomial function $\phi_f$ and the left and right derivatives of $\phi_{f,p}$ converge to $\phi'_f$. We use this fact to prove the existence of the limit F-signature and limit Hilbert-Kunz multiplicity for diagonal hypersurfaces. When $f$ is a Fermat hypersurface, we investigate the shape of the F-signature function of $f$ and provide an explicit formula for the limit F-signature and, in some cases, also for the F-signature for fixed $p$. This allows us to answer negatively to a question of Watanabe and Yoshida.
	\end{abstract}

	%%%%%%%%%%%%%%%%%%%%%%%%%
	\section{introduction}
	Let $A_p=\mathbb{F}_p\llbracket x_1,\dots,x_n\rrbracket$ be the power series ring in $n$ variables over the finite field $\mathbb{F}_p$ of positive characteristic $p$, and let $I$ be an ideal of $A_p$ such that $R=A_p/I$ is reduced. The {\it F-signature function of $R$} is the numerical function
	\[
	\FS_R(e)=\frk_R(R^{1/p^e})
	\]
	where $R^{1/p^e}$ is the ring obtained by adjoining $p^e$-th roots of elements in $R$, and $\frk_R(R^{1/p^e})$ denotes the maximal rank of a free $R$-summand of $R^{1/p^e}$.
	The F-signature function was introduced by Smith and Van den Bergh \cite{SVdB97} in the context of rings with finite F-representation type, and later considered in greater generality by Huneke and Leuschke \cite{HunLeu}, who introduced the name F-signature, and by Watanabe and Yoshida \cite{WatYosh}, under the name of ``minimal relative Hilbert-Kunz multiplicity.'' The third author   (see \cite{Tucker12, PolTuc}) proved that 
	\[
	\FS_R(e)=\fs(R)p^{e\dim R}+O(p^{(\dim R-1)e}),
	\]
	where $\dim R$ is the Krull dimension of the ring and the leading coefficient $\fs(R)$ is a real number in the interval $[0,1]$, called the \emph{F-signature}. This invariant encodes information about the singularities of the ring. In fact, we have that $\fs(R)=1$ if and only if $R$ is regular \cite{HunLeu, WatYos2000} and $\fs(R)>0$ if and only if $R$ is strongly F-regular \cite{AL03}.
	On the other hand, $\fs(R)$ is typically very hard to compute and the precise value is known only for certain limited classes of rings. The function  $\FS_R(e)$ is even more mysterious and difficult to predict, with a few very special exceptions \cite{CDS19, Sin05}.

	When $I=(f)$ is a hypersurface, we can introduce and study a more general function $\phi_{f,p}$ which goes from $\mathscr{I}=[0,1]\cap\left\{\frac{a}{p^e}: \ a,e\in\N\right\}$ to $\Q$ and is defined as
	\[
	\phi_{f,p}\left(\frac{a}{p^e}\right)=\frac{1}{p^{{ne}}}\dim_{\KK}\big(A_p/(x_1^{p^e},\dots,x_n^{p^e},f^a)\big).
	\]
	The function $\phi_{f,p}$ was first introduced by Monsky and Teixeira \cite{MonTexI,MonTexII}, and later studied by the third author, Blickle, and Schwede \cite{BST13} who proved that $\phi_{f,p}$ can be extended to a Lipschitz continuous concave function defined on the whole interval $[0,1]$, and coincides with F-signature of pairs. We also mention that $\phi_{f,p}(0)=0$, $\phi_{f,p}(1)=1$, and the function is non-decreasing.
	One of the main interests of $\phi_{f,p}$ relies in the fact that it encodes at the same time information about the F-signature function and the Hilbert-Kunz function, which is another important numerical function that can be defined in the same setup and was first introduced by Kunz  \cite{Kun69, Kun76}. In a nutshell, the values of $\phi_{f,p}$ at points $\frac{1}{p^{e}}$ give the Hilbert-Kunz function, and the values of $\phi_{f,p}$ at points $1-\frac{1}{p^{e}}$ give the F-signature function (see Lemma~\ref{lemma_F-signaturefractal} for a precise statement). Moreover, the intersection of $\phi_{f,p}$ with the line $y=1$ gives the  F-pure threshold \cite{BST13}. 
	On the other hand, we point out that often the function $\phi_{f,p}$ is much more difficult to compute than both the F-signature and the Hilbert-Kunz functions. For example, for the cubic hypersurface  $f=x^3+y^3+xyz$ in characteristic $2$, both the F-signature and the Hilbert-Kunz functions are known (see \cite{BuchC97}). On the other hand, the shape of the function $\phi_{f,p}$ is only conjecturally known and is related to Monsky's conjecture about the irrationality of the Hilbert-Kunz multiplicity of the hypersurface $f+uv$ (see \cite{MonAlg} for more details).
	
	The aim of this article is to investigate the function $\phi_{f,p}$ for diagonal hypersurfaces, that is when $f=x_1^{d_1}+\cdots+x_n^{d_n}$ for some integers $d_1,\dots,d_n\geq2$.
	We have two main goals. On the one hand, we study the behaviour of $\phi_{f,p}$ for $p\to\infty$ and we prove that it converges to a piecewise polynomial function. On the other hand, we investigate explicitely the functions $\phi_{f,p}$ (for fixed $p$) to study the shape of the F-signature function $\FS_{A_p/f}(e)$ of Fermat hypersurfaces.
	
	Our first main result is the following.
	
	\begin{theoremx}[Theorem~\ref{thm_15} and Theorem~\ref{thm16}]\label{thmA}
		Let $2\leq d_1\leq \cdots\leq d_n$ be integers and let $f=x_1^{d_1}+\cdots+x_n^{d_n}$.
		Then the functions $\phi_{f,p}$ converge uniformly, as $p$ goes to infinity, to a piecewise polynomial function $\phi_f$ given by
		\[
		\phi_f(t)=\frac{d_1\cdots d_n}{2^n\cdot n!}\left(C_0(t)+2\sum_{\lambda\in\mathbb{Z}_{\geq1}}C_{\lambda}(t)\right),
		\]
		where $C_{\lambda}(t)$ for $\lambda\in\mathbb{Z}_{\geq1}$ is the piecewise polynomial function
		\[
		C_{\lambda}(t)=\sum(\epsilon_0\cdots\epsilon_n)\left(\epsilon_0t+\frac{\epsilon_1}{d_1}+\cdots+\frac{\epsilon_n}{d_n}-2\lambda\right)^n
		\]
		with the sum taken over all choices of $\epsilon_0,\dots,\epsilon_n\in\{\pm1\}$ with $\epsilon_0t+\frac{\epsilon_1}{d_1}+\cdots+\frac{\epsilon_n}{d_n}-2\lambda\geq0$.
		Moreover, for any $t\in[0,1]$, the sequence of derivatives $\partial_{-}\phi_{f,p}(t)$ converges to $\partial_{-}\phi_f(t)$ and $\partial_{+}\phi_{f,p}(t)$ converges to $\partial_{+}\phi_f(t)$. In particular, for all but finitely many explicit points, both sequences converge to $\phi_f'(t)$.
	\end{theoremx}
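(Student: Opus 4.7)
The proof has three phases.

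\emph{Phase 1: Explicit formula for fixed $p$.} The diagonal structure of $f$ together with the tensor decomposition $A_p/\mathfrak{m}^{[p^e]}\cong\bigotimes_{i=1}^n\KK[x_i]/(x_i^{p^e})$ and the multinomial expansion $f^a=\sum_{|c|=a}\binom{a}{c}\prod_i x_i^{d_ic_i}$ lets me express $\dim_\KK A_p/(\mathfrak{m}^{[p^e]},f^a)$ as a combinatorial sum indexed by tuples $c=(c_1,\ldots,c_n)$ with $|c|=a$ and controlled by floor functions of the quantities $d_ic_i/p^e$. I would set this up via the $p$-fractal / representation-ring machinery developed by Monsky, Han, and Teixeira for diagonal hypersurfaces: starting from the one-variable identity $\dim_\KK\KK[x]/(x^{p^e},x^{da})=\min(p^e,da)$, one propagates to the full diagonal sum via a ``convolution'' formula that combines the 1-variable data. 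The outcome is a closed form for $\phi_{f,p}(a/p^e)$ as a finite signed sum of products of piecewise linear/polynomial pieces, whose structure is that of a Riemann sum for an $n$-variable integral over $[0,1]^n$ with mesh $1/p^e$.

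\emph{Phase 2: Limit as $p\to\infty$.} After rescaling $b_i\mapsto b_i/p^e$ and $a/p^e\to t$, the discrete convolution becomes an $n$-fold iterated integration, and the limit $\phi_f(t)$ emerges as the volume of a semi-algebraic region in $[0,1]^n$ cut out by a periodic linear condition involving $t$ and the $1/d_i$. The iterated antiderivative of the Heaviside function equals $(\cdot)_+^n/n!$, which accounts for the truncated power in the formula; the signed sum over $\epsilon\in\{\pm1\}^{n+1}$ with sign $\prod\epsilon_i$ is the natural $(n+1)$-fold finite-difference operator in the independent variables $(t,1/d_1,\ldots,1/d_n)$; and the outer sum over $\lambda\in\mathbb{Z}_{\geq1}$ encodes the period-$2$ translates corresponding to the saturation constraints that propagate through the iterated convolution. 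The prefactor $d_1\cdots d_n/(2^n n!)$ comes from the Jacobian of the coordinate change $u_i\mapsto u_i/d_i$ combined with the standard $1/n!$ from iterated integration. Only finitely many $\lambda$ contribute nontrivially, since $\epsilon_0t+\sum_i\epsilon_i/d_i$ is uniformly bounded by $1+\sum_i 1/d_i$, so all interchanges of sums and limits are justified.

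\emph{Phase 3: Uniform convergence and derivatives.} Uniform convergence on $[0,1]$ follows from pointwise convergence on the dense set $\mathscr{I}$ combined with the uniform Lipschitz bound on $\phi_{f,p}$ recalled in the introduction (from the cited BST work), via a standard $\varepsilon/3$ argument. For the one-sided derivatives, each $\phi_{f,p}$ is concave (again by BST), and a classical theorem in convex analysis states that for a uniformly convergent sequence of concave functions on an interval the one-sided derivatives $\partial_\pm$ converge at every point to those of the limit. Since the limit $\phi_f$ is piecewise polynomial by Phase 2, one has $\partial_-\phi_f(t)=\partial_+\phi_f(t)=\phi_f'(t)$ except at the finite set of breakpoints, namely the values of $t\in[0,1]$ at which $\epsilon_0t+\sum_i\epsilon_i/d_i-2\lambda=0$ for some $(\epsilon,\lambda)$; this yields the final ``explicit points'' assertion.

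\emph{Main obstacle.} The hardest step is Phase 2: matching the output of the Phase 1 combinatorial formula, in the limit, to the precise closed-form signed-sum-of-truncated-powers expression, with the correct bookkeeping of signs, shifts by $2\lambda$, and normalization. A secondary technical point is confirming that the period-$2$ saturation structure (rather than any other periodicity) is what emerges from iterating $\min(1,\cdot)$-type building blocks; an off-by-one error or misidentified fundamental domain would spoil the closed form even if the existence of a piecewise polynomial limit were clear on general grounds.
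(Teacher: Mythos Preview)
Your outline diverges from the paper in Phases 1--2 and has a genuine gap in Phase 3.

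\textbf{Phases 1--2 versus the paper.} The paper does not use a multinomial expansion of $f^a$, nor any Riemann-sum/iterated-integration picture. Instead it (i) replaces the exponents $p$ by $d\ell$ (where $p=\ell d+r$) at a cost $O(1/p)$, (ii) recognizes the resulting length as $d\cdot D_{\mathbb{F}_p}(a_p,e_1\ell,\ldots,e_n\ell)$ with $e_i=d/d_i$, and (iii) applies the Han--Monsky coefficient formula (their Theorem~2.20): $D_{\mathbb{F}_p}=\sum_{\lambda}c_{\gamma-p\lambda}$ where the $c_i$ are coefficients of $(1-x)^{-n-1}(1-x^{a_p})\prod_i(1-x^{e_i\ell})$. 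Each $c_{\gamma-p\lambda}$ is then an explicit signed sum of binomial coefficients $\binom{\omega_\lambda(p)}{n}$, and one takes $p\to\infty$ termwise since $\omega_\lambda(p)/p\to\frac12(\epsilon_0 t+\sum\epsilon_i/d_i-2\lambda)$. This produces the truncated-power closed form directly, with no integration and no period bookkeeping to get wrong. Your ``convolution / Riemann sum'' route might be made to work, but the step you flag as the main obstacle (matching the limit to the precise signed sum, with the correct $2\lambda$ shifts) is exactly what the Han--Monsky formula hands you for free; without it you have given no mechanism that would produce that specific closed form.

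\textbf{Phase 3: a real gap.} The ``classical theorem'' you invoke is false as stated. Uniform convergence of concave functions does \emph{not} force convergence of one-sided derivatives at corners of the limit: take $f_n(x)=-\sqrt{x^2+1/n^2}$ on $[-1,1]$, which are concave and converge uniformly to $-|x|$, yet $f_n'(0)=0$ for all $n$ while $\partial_{\pm}(-|x|)(0)=\mp1$. The correct general statement (e.g.\ Rockafellar) gives convergence of derivatives only at points where the limit is differentiable, together with one-sided inequalities at corners. That suffices for the ``all but finitely many points'' clause, but not for the full assertion that $\partial_{\pm}\phi_{f,p}(t)\to\partial_{\pm}\phi_f(t)$ at every $t$, including the breakpoints. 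The paper proves the breakpoint case by obtaining a \emph{uniform} quantitative bound $|\partial_{\pm}\psi_p(t)-\psi'(t)|\le C/p$ valid on the interior of each polynomial piece (via explicit difference quotients and a second application of the Han--Monsky formula, plus a lemma controlling the error from replacing $p$ by $d\ell$ at the level of derivatives), and then passing to the boundary using left/right continuity of one-sided derivatives of convex functions. Your Phase~3 would need an analogous quantitative estimate; the soft convexity argument alone does not close the gap.
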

	
	The previous theorem enable to compute the piecewise polynomial function $\phi_f(t)$ for simple diagonal hypersurfaces such as $f=x^2+y^3$ (Example~\ref{ex-x2y3}) or some\footnote{Precisely, the ADE singularities which are diagonal have types $A_n$, $E_6$, and $E_8$.} two-dimensional ADE singularities (Remark~\ref{remarkADE}). 
	Moreover, with this result at our disposal, we can use the connection between $\phi_{f,p}$ and Hilbert-Kunz and F-signature functions to obtain several interesting corollaries. We prove that the limits $\lim_{p\to\infty}e_{\HK}(A_p/f)$ and $\lim_{p\to\infty}\fs(A_p/f)$ exist and we provide combinatorial formulas expressing the values of those limits (see Corollary~\ref{cor:limitHK} and Corollary~\ref{cor:Fsignaturediagonal}).
	Moreover, for the Fermat quadric, i.e. when $d_1=\cdots=d_n=2$, we obtain nicer formulas for the limit function $\phi_f=\lim_{p\to\infty}\phi_{f,p}$ in terms of Euler polynomials (see Theorem~\ref{thm:squareslim}). These formulas allow us to recover a result by Gessel and Monsky \cite{GesselMonsky} who showed, with a different proof, that the limit value $\lim_{p\to\infty}e_{\HK}(A_p/f)= 1+ c_n$, where $c_n$ is the coefficient of $z^{n-1}$ in the power series expansion of  $\sec(z)+\tan(z)$.
	
	In the second part of the paper, we focus on the computation of $\FS_{A_p/f}$ for the Fermat hypersurface $f=x_1^d+\cdots+x_n^d$. The most interesting case is when $p>d$ and $n>d$, otherwise the quotient $A_p/f$ is not strongly F-regular. We are able to prove the following. 
	
	\begin{theoremx}[Theorem~\ref{thm:shapeFermat}]
		Let $n>d\ge2$ be integers with $p>d$ and $p\equiv\pm1\ \mathrm{mod}\ d$ and assume that 
		\begin{equation*}
			\begin{cases}
				n\cdot\left(\frac{p-1}{d}\right)  &\text{is even if $p\equiv 1\ \mathrm{mod}\ d$}\\
				n\cdot\left(\frac{p+1}{d}\right)  &\text{is even if $p\equiv -1\ \mathrm{mod}\ d$}.
			\end{cases}
		\end{equation*}
		Let $f=x_1^d+\cdots+x_n^d$, then there exist constants $B,C\in\mathbb{Z}$ (depending on $p,d,n$) with $0\le B\le p^{n-3}$ such that for every $e\geq1$
		\[
		\FS_{A_p/f}(e)= \fs(A_p/f)\cdot p^{(n-1)e}+(1-\fs(A_p/f))\cdot B^e 
		\]
		with $\fs(A_p/f)=\displaystyle\frac{-C}{p^{n-1}-B}\in\mathbb{Q}$. 
	\end{theoremx}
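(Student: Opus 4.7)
The plan is to use the connection between the F-signature function and the function $\phi_{f,p}$ evaluated at points of the form $1-\tfrac{1}{p^{e}}$ (as encoded in Lemma~\ref{lemma_F-signaturefractal}), and then to exploit the self-similarity of $\phi_{f,p}$ for Fermat hypersurfaces under the congruence hypothesis $p\equiv\pm1\pmod d$ to derive a linear recursion in $e$ for $\FS_{A_p/f}(e)$. Concretely, I would first translate the question into counting dimensions $\dim_{\KK}\bigl(A_p/(x_1^{p^{e}},\dots,x_n^{p^{e}},f^{p^{e}-1})\bigr)$ and then write
\[
p^{e}-1=(p-1)(1+p+\cdots+p^{e-1}),
\]
so that $f^{p^{e}-1}=\prod_{i=0}^{e-1} (f^{p-1})^{[p^{i}]}$, where $[p^{i}]$ denotes the $i$-th Frobenius power. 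This reduces the computation for exponent $e{+}1$ to one for exponent $e$ together with a single additional Frobenius layer governed by $f^{p-1}$.

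The key step is to analyze this additional layer. Under the assumption $p\equiv\pm1\pmod d$ the integer $(p\mp1)/d$ is well defined, and $f^{p-1}$ can be expanded multinomially as a sum of monomials $\binom{p-1}{a_{1},\dots,a_{n}}x_{1}^{da_{1}}\cdots x_{n}^{da_{n}}$ where each exponent $d a_{i}$ is constrained modulo $p$. Reducing these monomials modulo $(x_{1}^{p},\dots,x_{n}^{p})$ and sorting by their ``Frobenius class'' produces a partition of the monomial basis of $A_p/(x^{[p]})$ into subsets of two types: those on which the map $e\mapsto e{+}1$ multiplies the count by $p^{n-1}$ (the generic, asymptotically dominant part), and a residual ``boundary'' part whose multiplicative factor is an explicit integer $B$ counting a smaller set of monomials (at most $p^{n-3}$ in number, because two of the $n$ variables contribute strictly fewer than $p$ admissible residues). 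The parity hypothesis on $n\cdot(p\mp1)/d$ guarantees that the signs attached to the various boundary contributions align so that no third eigenvalue appears, yielding a clean two-step recursion
\[
\FS_{A_p/f}(e+1)=(p^{n-1}+B)\,\FS_{A_p/f}(e)-p^{n-1}B\,\FS_{A_p/f}(e-1)+\text{(telescoping const.)}.
\]

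Standard linear-recursion solving then gives $\FS_{A_p/f}(e)=\alpha\,p^{(n-1)e}+\beta\,B^{e}$ for constants $\alpha,\beta$. The initial condition $\FS_{A_p/f}(0)=\dim_{\KK}(A_p/(x_{1},\dots,x_{n},1))$, interpreted correctly, forces $\alpha+\beta=1$, so that $\beta=1-\alpha$, while matching $\FS_{A_p/f}(1)$ forces $\alpha=-C/(p^{n-1}-B)$ where $C\in\mathbb{Z}$ is the explicit constant coming from the inhomogeneous term. By the standard characterization of the leading coefficient of the F-signature function, $\alpha=\fs(A_p/f)$, which produces the stated formula.

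The main obstacle is the bookkeeping in the second paragraph: pinning down the exact integer $B$ as a count of admissible residue patterns and verifying both its integrality and the bound $B\le p^{n-3}$. This requires a delicate combinatorial analysis of the multinomial coefficients $\binom{p-1}{a_{1},\dots,a_{n}}\pmod p$ arising from $f^{p-1}$, using Lucas' theorem together with the congruence $p\equiv\pm1\pmod d$, and it is precisely the parity assumption that suppresses the potential cross terms that would otherwise destroy the clean form. Once this is done, verifying $B\ge0$ is immediate from the counting interpretation, and $B\le p^{n-3}$ follows because the constraint imposed by the Fermat symmetry on two of the variables cuts the admissible exponents from $p$ down to a strictly smaller set.
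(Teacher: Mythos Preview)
Your outline has the right shape—find a linear recursion for $\FS_{A_p/f}(e)$, solve it, match initial conditions—but the central step is not carried out, and the sketch you give would not produce the result as stated.

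The main gap is your second paragraph. You assert that a multinomial expansion of $f^{p-1}$ together with Lucas' theorem yields a partition of the monomial basis of $A_p/\mathfrak m^{[p]}$ into two pieces with multiplicative factors $p^{n-1}$ and $B$, and that the parity hypothesis ``guarantees that the signs \dots\ align so that no third eigenvalue appears.'' None of this is argued. The identity $I_{e+1}=(I_e^{[p]}:f^{p-1})$ for $I_e=(\mathfrak m^{[p^e]}:f^{p^e-1})$ is a recursion at the level of \emph{ideals}; extracting a scalar two-term recursion on $\dim_\KK A_p/I_e$ requires exhibiting a finite-dimensional invariant structure with exactly two eigenvalues, which your multinomial/Lucas sketch does not provide. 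Your displayed recursion also carries an unexplained ``(telescoping const.)'' which, if nonzero, would force a third term in the closed form unless $B=1$. Finally, your justification of $B\le p^{n-3}$ (``two of the $n$ variables contribute strictly fewer than $p$ admissible residues'') is only a heuristic: ``strictly fewer than $p$'' does not yield the bound $p^{n-3}$.

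The paper takes a different and sharper route. It works in the Han--Monsky representation ring $\Gamma$ and the Monsky--Teixeira sequence space $\Lambda$: with $M=\lfloor p/d\rfloor$, $\u=\mathscr{L}(\phi_{x_1^d+\cdots+x_{n-1}^d,p})$ and $\v=\mathscr{L}(\bar\phi_{x^d,p})$, the one-variable shifting rules of Example~\ref{ex:first shifting rules} multiply to give
\[
p^{n-1}S(\u)=\lambda_M^{n-1}R^{(n-1)(M+\alpha)}(\u)+(\text{terms in }\Delta),\qquad
pS(\v)=\lambda_{p-1-M}R^{M+\alpha}(\v)+(\text{terms in }\Delta).
\]
Feeding these into the bilinear form $r_n$ of Definition~\ref{dfn:operatore bilineare r} and using Proposition~\ref{prop:properties of the function r} produces the functional equation $r_n(\u,\v)=p^{n-1}z-1+z\bigl(B\,r_n(\u,\v)+C\bigr)$, where $B$ is \emph{precisely} the coefficient of $\lambda_{p-1-M}$ in the expansion of $\lambda_M^{n-1}$; the bound $0\le B\le p^{n-3}$ is then immediate from the Han--Monsky estimate~\eqref{eq:stima coefficienti potenza lambda}. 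The parity hypothesis enters concretely: it forces $(n-1)(M+\alpha)$ and $M+\alpha$ to have the same parity, so the reflection powers cancel via Proposition~\ref{prop:properties of the function r}(e). Solving for $r_n(\u,\v)$ and applying Theorem~\ref{cor:from function r to FSS} gives the rational generating series $\FSS_f(z)=\dfrac{-1+(p^{n-1}+C)z}{(p^{n-1}z-1)(1-Bz)}$, from which the closed form follows by partial fractions.
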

	
	Determining the precise value of the constants $B$ and $C$, and thus of the F-signature $\fs(A_p/f)$, can be complicated. The first non-trivial case is when $n=d+1$, that is the Fermat hypersurface of degree $d$ in $d+1$ variables. In this case, Watanabe and Yoshida \cite{WatYos05} proved that the F-signature is always smaller or equal than the value $\frac{1}{2^{d-1}(d-1)!}$, and asked whether this is in fact an equality. We are able to provide a negative answer to this question in the case $d=3$ by computing the exact value of  $\fs(A_p/f)$, which depends on $p$, but is always smaller than the Watanabe--Yoshida upper bound (see Theorem~\ref{thm:FsignatureFermatcubic}).
	On the other hand, building up on the formula of Theorem~\ref{thmA}, we can prove that when $p$ grows to infinity the F-signature limits to the known upper bound (see Proposition~\ref{cor:WYupperboundlimit}).
	
	In the last part of the paper, we present explicit examples of F-signature functions of hypersurfaces.
	This shows that the behaviour of the function $\FS_{A_p/f}(e)$ can be erratic and depends on the characteristic of the base field. For example, for the Fermat cubic in four variables $f=x_1^3+x_2^3+x_3^3+x_4^3$, the F-signature function is a polynomial in $p^e$ when $p=5$, but for $p=7$ it is not polynomial or quasi-polynomial, being $\FS_{A_p/f}(e)=\frac{21}{170} 7^{3e} + \frac{149}{170} 3^e$. A general explicit formula for any $p$ can be found in Theorem~\ref{thm:FsignatureFermatcubic}.
	
	\subsection*{Structure of the paper}
	In Section~\ref{section:preliminaries} we collect several preliminary results about the F-signature and the functions $\phi_{f,p}$ which will be used in the rest of the paper. Section~\ref{section:limitphifunctions} is dedicated to the proof of Theorem~\ref{thmA} about the limit function $\phi_f$ and its corollaries. In Section~\ref{section:Fermatquadric}, we focus on the Fermat quadric and the computation of $\phi_f$ in terms of Euler polynomials. Section~\ref{section:fsignatureseries} contains some technical results which will be needed during the computation of the F-signature function of the Fermat hypersurface, which is carried out in Section~\ref{section:Fermat}. Finally, in Section~\ref{section:examples} we present some examples of F-signature functions.

	\subsection*{Notation}
	For simplicity, whenever we are interested in the behaviour for $p\to\infty$, we consider the diagonal hypersurface $f=x_1^{d_1}+\cdots+x_n^{d_n}$ as an element of power series ring  $A_p=\mathbb{F}_p\llbracket x_1,\dots,x_n\rrbracket$  over finite fields $\mathbb{F}_p$ for different values of $p$.
	On the other hand, when we are interested in computations for a fixed $p$, we can relax this assumption and work over any perfect field $\KK$ of characteristic $p$. To avoid confusion, at the beginning of each section we state clearly our assumptions.
	
	Sometimes, we may also denote the F-signature $\fs(A_p/f)$ and the F-signature function $\FS_{ A_p/f}(e)$ of a hypersurface $f$ simply by $\fs(f)$ and $\FS_{f}(e)$ respectively. 
	
	\subsection*{Acknowledgements}
	
	We would like to thank Luis N\'{u}\~{n}ez-Betancourt, Holger Brenner, Alessandro De Stefani, Neil Epstein, Yusuke Nakajima for several useful discussions.
	
	A.~Caminata is supported by the Italian PRIN2020 grant 2020355B8Y ``Squarefree Gr\"obner degenerations, special varieties and related topics'',  by the Italian PRIN2022 grant P2022J4HRR ``Mathematical Primitives for Post Quantum Digital Signatures'', by the INdAM--GNSAGA grant ``New theoretical perspectives via Gr\"obner bases'', by the MUR Excellence Department Project awarded to Dipartimento di Matematica, Università di Genova, CUP D33C23001110001, and by the European Union within the program NextGenerationEU. Additionally, part of the work was done while Caminata was visiting the Institute of Mathematics of the University of Barcelona (IMUB). He gratefully appreciates their hospitality during his visit.
	
	K.~Tucker is partially supported by National Science Foundation Grant DMS No. DMS-2200716. Additionally, part of this research was performed while Tucker was visiting the Mathematical Sciences Research Institute (MSRI), now becoming the Simons Laufer Mathematical Sciences Institute (SLMath), which is supported by the National Science Foundation Grant No. DMS-1928930.

	%%%%%%%%%%%%%%%%%%%%%%%%
	%%%%%%%%%%%%%%%%%%%%%
	%%%%%%%%%%%%%%%%%%%%%%%%%
	
	\section{Preliminaries}\label{section:preliminaries}
	For the convenience of the reader, we recall some notation and facts from \cite{HanMon, MonTexI,MonTexII}, together with some preliminary results. We work over a perfect field $\KK$ of positive characteristic $p$.
	\subsection{Representation ring}
	We denote by $\Gamma$ the Grothendieck group of the semigroup of isomorphism classes of finitely generated $\KK[T]$-modules on which the variable $T$ acts nilpotently. Such modules are called $\KK$-objects. The sum in $\Gamma$ is given by direct sum, and we can also equip $\Gamma$ with a (commutative) ring structure using the following product: if $M$ and $N$ are two $\KK$-objects, the class of $M\otimes_\KK N$ represents the product of the classes of $M$ and $N$ in $\Gamma$. The action of $T$ on   $M\otimes_\KK N$ is given by $T(a\otimes b)=(Ta)\otimes b + a\otimes(Tb)$. The ring $\Gamma$ is called the \emph{representation ring}. The class of the zero module, which will be denoted by $\delta_0$, is the zero element of the ring, and the class of the module $\KK[T]/(T)$, which will be denoted by $\delta_1$, is the neutral element of the product. More generally, for any $e\in\mathbb{N}$, we denote by $\delta_e$ the image of the module $\KK[T]/(T^e)$ in $\Gamma$. Then, as a group, $\Gamma$ is the free Abelian group generated by the basis $\{\delta_i: \ i>0\}$. Another basis is given by $\{\lambda_i: \ i\in\mathbb{N}\}$, where $\lambda_i=(-1)^i(\delta_{i+1}-\delta_i)$. We point out that the $\lambda_i$-coordinate (of the image in $\Gamma$) of a $\KK$-object $M$ is  $(-1)^i\dim_\KK(T^iM/T^{i+1}M)$.
	For any $e\in\mathbb{N}$, we define $\Gamma_e$ to be the additive subgroup of $\Gamma$ generated by $\{\lambda_i: \ i<p^e\}$. It turns out that $\Gamma_e$ is a subring of $\Gamma$ (see \cite[Theorem~3.2]{HanMon}).
	
	The two bases $\{\delta_i\}$ and $\{\lambda_i\}$ of $\Gamma$ satisfy several multiplication rules which are worked out in \cite{HanMon}. We collect some of them here. We shall use them repeatedly in our computations.
	\begin{align}\label{eq:formule moltiplicazione lambda}
		\delta_i\delta_{p^ej}&=i\cdot\delta_{p^ej}   &&(0\leq i\leq p^e, \ j\geq1) \nonumber \\
		\lambda_i\lambda_{p^ej}&=\lambda_{p^ej+i}  &&(0\leq i< p^e)\nonumber\\
		\lambda_i\lambda_{p^e j-1}&=\lambda_{p^e j-1-i}  &&(0\leq i< p^e) \\
		\lambda_i\lambda_j&=\sum_{k=j-i}^{\min\{i+j,2p-2-i-j\}}\lambda_k &&(0\leq i\le j< p) . \nonumber
	\end{align}
	We will also need an estimate on the coefficients of the powers of the $\lambda_i$'s. In particular, if $r\ge 2$ and $0\le i\le p-1$ then \cite[Theorem 2.7]{HanMon} yields that
	\begin{equation}\label{eq:stima coefficienti potenza lambda}
		\lambda_i^r=\sum_{j=0}^{p-1}c_j\lambda_j \quad \text{with} \quad 0\le c_j\le p^{r-2}.
	\end{equation}
	Following \cite[§2]{MonTexII}, we define two $\mathbb{Z}$-linear functions $\alpha: \Gamma\longrightarrow\mathbb{Z}$ and $\theta: \Gamma\longrightarrow\Gamma$ as follows 
	\vspace{-10mm}
	\begin{multicols}{2}
		\[
		\begin{split}
			\alpha\left(\sum_{i=0}^nc_i\lambda_i\right)= c_0,
		\end{split}
		\]
		\break
		\[
		\begin{split}
			\theta(\lambda_i)= \begin{cases} \lambda_{pi} \ &\text{ if } i \text{ is even}\\
				\lambda_{pi+p-1} \ &\text{ if } i \text{ is odd.}
			\end{cases}
		\end{split}
		\]
	\end{multicols}
	The map $\alpha$ has the property that $\alpha(\lambda_i\lambda_j)=1$ if $i=j$ and $\alpha(\lambda_i\lambda_j)=0$ otherwise.  
	The operator $\theta$ is actually a ring homomorphism by \cite[Theorem~2.13]{MonTexII} and satisfies $\theta^{e}(\Gamma)\Gamma_e=\Gamma$ for all $e>0$.
	Since we will need to work with rational coefficients, we  consider the ring $\Gamma_{\mathbb{Q}}=\Gamma\otimes_{\mathbb{Z}}\mathbb{Q}$ and extend the maps $\alpha$ and $\theta$ in the natural way as $\alpha\otimes \mathrm{Id}:\Gamma_{\mathbb{Q}}\rightarrow \mathbb{Q}$ and $\theta\otimes \mathrm{Id}:\Gamma_{\mathbb{Q}}\rightarrow\Gamma_{\mathbb{Q}}$. We shall denote these maps simply by $\alpha$ and $\theta$ respectively.
	
	Recalling that the $\lambda_0$-coordinate of a $\KK$-object $M$ is just the $\KK$-vector space dimension of $M/TM$, we can write $\alpha(M)=\dim_\KK M/TM$ and we can generalize this notion defining $\alpha_k(M)=\dim_\KK M/T^kM$.
	
	In order to relate this to length computations of diagonal hypersurfaces, we consider, as in \cite[§1]{HanMon}, the number
	\[
	D_{\KK}(k_1,\dots,k_n) = \dim_{\KK}\left(\KK[x_1,\dots,x_n]/(x_1^{k_1},\dots,x_n^{k_n},x_1+\cdots+x_n)\right)
	\]
	for every nonnegative integers $k_1,\dots, k_n$. If the variable $T$ acts as multiplication by $x_i$ on $\delta_{k_i}=\KK[x_i]/x_i^{k_i}$, then $T$ acts as multiplication by $x_1+\dots+x_n$ on the product $\prod_i\delta_{k_i}=\KK[x_1,\dots,x_n]/(x_1^{k_1},\dots,x_n^{k_n})$. Thus we obtain that
	\begin{equation*}
		D_\KK(k_1,\dots,k_n)=\alpha\left(\prod_{i=1}^n\delta_{k_i}\right),
	\end{equation*}
	as remarked also in \cite[Theorem 1.9]{HanMon}.
	Moreover, for $t \in \mathbb{Q}_{> 0}$ we define 
	\[
	\delta_t := (1-t+\lfloor t \rfloor)\delta_{\lfloor t \rfloor} + (t - \lfloor t \rfloor)\delta_{\lfloor t + 1 \rfloor} \in \Gamma_{\mathbb{Q}}.
	\]
	Alternatively, writing $t = \frac{a}{b}$ for positive integers $a,b$, one checks $\delta_{\frac{a}{b}} \in \Gamma_{\mathbb{Q}}$ is equal to $\frac{1}{b}$ times the class of the $\KK$-object $\KK[x]/(x^a)$ where $T$ acts as multiplication by $x^b$.
	Similarly as before, the product of elements $\delta_{\frac{a_1}{b_1}}\cdots\delta_{\frac{a_n}{b_n}}$ is then $\frac{1}{b_1\cdots b_n}$ times the class of the $\KK$-object $\KK[x_1,\dots,x_n]/(x_1^{a_1},\dots,x_n^{a_n})$ where $T$ acts as multiplication by the diagonal hypersurface $x_1^{b_1}+\cdots+x_n^{b_n}$.

	\subsection{F-signature and Hilbert-Kunz of hypersurfaces}
	Let $A_p=\KK\llbracket x_1,\dots,x_n\rrbracket$ be the power series ring in $n$ variables and consider an element $f$ of the maximal ideal of $A_p$ such that the quotient ring $R=A_p/f$ is reduced. The {\it F-signature function of $R$} is the numerical function
	\[
	\FS_R(e)=\frk_R(R^{1/p^e})
	\]
	where $R^{1/p^e}$ is the ring obtained by adjoining $p^e$-th roots of elements in $R$, and $\frk_R(R^{1/p^e})$ denotes the maximal rank of a free $R$-summand of $R^{1/p^e}$ or, equivalently, the maximal rank of a free $R$-module $P$ for which there is a surjection $R^{1/p^e} \to P\to 0$.
	It is known (see \cite{Tucker12, PolTuc}) that 
	\[
	\FS_R(e)=\fs(R)p^{(n-1)e}+O(p^{(n-2)e}),
	\]
	where $n-1$ is the dimension of $R$ and the leading coefficient $\fs(R)$ is a real number in the interval $[0,1]$, called \emph{F-signature}.
	The \emph{Hilbert-Kunz function of $R$} is the function 
	\[
	\HK_R(e)=\dim_{\KK}\left(R/\mathfrak{m}^{[p^e]}\right),
	\]
	where $\mathfrak{m}^{[p^e]}$ denotes the Frobenius power of the maximal ideal, generated by all the $p^e$-th powers of elements of $\mathfrak{m}$. Also for this function, it is known (\cite{Mon83}) that  
	\[
	\HK_R(e)=e_{\HK}(R)p^{(n-1)e}+O(p^{(n-2)e}),
	\]
	where $e_{\HK}(R)$ is a positive real number called \emph{Hilbert-Kunz multiplicity}.
	
	The F-signature and Hilbert-Kunz functions of a hypersurface can be treated together by  associating with the element $f$ an element  $\phi_{f,p}$ of the space $\Q^{\mathscr{I}}$ 	of functions from $\mathscr{I}=[0,1]\cap\left\{\frac{a}{p^e}: \ a,e\in\N\right\}$ to $\Q$ 
	
	\begin{equation}\label{eq:defphi_p,f}
		\begin{split}
			\phi_{f,p}: \  &\mathscr{I}\longrightarrow\Q\\
			&\frac{a}{p^e}\longmapsto \frac{1}{p^{{ne}}}\dim_{\KK}\big(A/(x_1^{p^e},\dots,x_n^{p^e},f^a)\big).
		\end{split}
	\end{equation}
	
	The main connection between the function  $\phi_{f,p}$ and the Hilbert-Kunz and F-signature functions is expressed in the following lemma.

	\begin{lemma}\label{lemma_F-signaturefractal}
		For any $f\in A$ and any $e\in\N$, we have
		\begin{enumerate}
			\item $\HK_{R}(e)= p^{ne}\phi_{f,p}\left(\frac{1}{p^e}\right)$
			\item $\FS_{R}(e)=p^{ne}\left(1-\phi_{f,p}\left(1-\frac{1}{p^e}\right)\right)$.
		\end{enumerate}
	\end{lemma}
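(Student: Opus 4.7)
Part (1) is a direct unwinding of definitions. The maximal ideal $\mathfrak{m}$ of $R=A_p/(f)$ is generated by the images of $x_1,\dots,x_n$, hence $\mathfrak{m}^{[p^e]}=(x_1^{p^e},\dots,x_n^{p^e})R$ and $R/\mathfrak{m}^{[p^e]}\cong A_p/(x_1^{p^e},\dots,x_n^{p^e},f)$. Taking $\KK$-dimensions and applying \eqref{eq:defphi_p,f} at $a=1$ yields $\HK_R(e)=p^{ne}\phi_{f,p}(1/p^e)$.

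For part (2) my plan is to argue in two steps. First, via Fedder's lemma I would establish
\[
\FS_R(e) \;=\; \dim_\KK A_p\big/\big((x_1^{p^e},\dots,x_n^{p^e}):_{A_p}f^{p^e-1}\big).
\]
Since $R$ is a hypersurface in the regular ring $A_p$, it is Gorenstein, so $\mathrm{Hom}_R(R^{1/p^e},R)$ is free of rank one over $R^{1/p^e}$; by the standard Fedder identification (see e.g.~\cite{BST13}), a generator $\Phi_e$ is obtained by pre-composing the canonical trace generator of $\mathrm{Hom}_{A_p}(A_p^{1/p^e},A_p)$ with multiplication by $f^{p^e-1}$. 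Combined with the Aberbach--Enescu description of the F-signature through the $e$-th splitting ideal $I_e=\{r\in R:\,\phi(r^{1/p^e})\in\mathfrak{m}\ \forall\,\phi\in\mathrm{Hom}_R(R^{1/p^e},R)\}$ -- for which $\FS_R(e)=\dim_\KK R/I_e$ -- one checks that $r\in I_e$ iff a lift $\tilde r\in A_p$ satisfies $\tilde r\,f^{p^e-1}\in(x_1^{p^e},\dots,x_n^{p^e})$. That is, $I_e=\big((x_1^{p^e},\dots,x_n^{p^e}):_{A_p}f^{p^e-1}\big)R$, well-defined because $f^{p^e}\in(x_1^{p^e},\dots,x_n^{p^e})$ by the freshman's dream in characteristic $p$; the containment $(f)\subseteq\big((x_1^{p^e},\dots,x_n^{p^e}):_{A_p}f^{p^e-1}\big)$ then gives the displayed equality above.

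Second, from the short exact sequence
\[
0\to A_p\big/\big((x_1^{p^e},\dots,x_n^{p^e}):_{A_p}f^{p^e-1}\big) \xrightarrow{\cdot f^{p^e-1}} A_p/(x_1^{p^e},\dots,x_n^{p^e}) \to A_p/(x_1^{p^e},\dots,x_n^{p^e},f^{p^e-1})\to 0
\]
I would extract $\dim_\KK A_p\big/\big((x_1^{p^e},\dots,x_n^{p^e}):_{A_p}f^{p^e-1}\big)=p^{ne}-\dim_\KK A_p/(x_1^{p^e},\dots,x_n^{p^e},f^{p^e-1})$, and apply \eqref{eq:defphi_p,f} at $a=p^e-1$ to rewrite the right-hand side as $p^{ne}\bigl(1-\phi_{f,p}(1-1/p^e)\bigr)$, which completes the proof. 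The only subtle input is the Fedder identification of $I_e$ in the first step; since it is well-documented in \cite{BST13}, I would simply invoke it, although a self-contained derivation via the explicit trace map on $A_p^{1/p^e}$ (extracting the $(x_1\cdots x_n)^{p^e-1}$-coefficient and reducing to the monomial condition $\tilde r f^{p^e-1}\in(x_1^{p^e},\dots,x_n^{p^e})$) is also straightforward.
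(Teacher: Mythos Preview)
Your argument is correct and follows the same route as the paper: part (1) is immediate from the definitions, and for part (2) the paper simply cites \cite[Proposition~4.1]{BST13}, whose proof is precisely the Fedder-type identification of the splitting ideal $I_e$ with $\big((x_1^{p^e},\dots,x_n^{p^e}):_{A_p}f^{p^e-1}\big)R$ followed by the short exact sequence you wrote down. You have essentially unpacked that citation.
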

	\begin{proof}
		Point (1) follows immediately from the definition and point (2) is \cite[Proposition~4.1]{BST13}.
	\end{proof}

	Functions of the form $\phi_{f,p}$ where first studied by  Monsky and Teixeira \cite{MonTexI,MonTexII} who introduced the notion of $p$-fractal. A function $\phi: \mathscr{I}\longrightarrow\Q$  is a $p$-fractal if it lies in a finitely dimensional space stable under the action of the operators $T_{p^e|b}$ for any $e\in\mathbb{N}$ and integer $0\leq b< p^e$, which are defined as
	\[
	(T_{p^e|b}\phi)\left(\frac{a}{p^m}\right)=\phi\left(\frac{a+bp^m}{p^{m+e}}\right).
	\]
	Monsky and Teixeira proved that if $\phi_{f,p}$ is a $p$-fractal, then $f$ has rational Hilbert-Kunz series. 
	\begin{remark}\label{rk:analytic properties phi and psi}
		The functions $\phi_{f,p}$ have been studied also in \cite{BST13}. There, Blickle, Schwede and the third author introduce the function
		\begin{equation}\label{eq:psi}
			\psi_{f,p}\left(\frac{a}{p^e}\right)=1-\phi_{f,p}\left(\frac{a}{p^e}\right)=\frac{1}{p^{{ne}}}\dim_{\KK}\big(A/(\mathfrak{m}^{[p^e]}:f^a)\big)
		\end{equation}
		and prove that $\psi_{f,p}$ can be extended to a function defined on the whole interval $[0,1]$ which coincides with the F-signature of pairs. Moreover, they show that $\psi_{f,p}$ is convex, Lipschitz continuous \cite[Corollary~3.3 and Theorem~3.5]{BST13} and decreasing (since $\psi_{f,p}(0)=1$, $\psi_{f,p}(1)=0$ and $\psi_{f,p}$ is non-negative). These facts imply that the function $\phi_{f,p}$ can be extended to a concave, increasing, Lipschitz continuous function in the interval $[0,1]$. Moreover, they prove that the left derivative of $\phi_{f,p}$ exists at $t = 1$ and equals the F-signature of $A_p/f$. Similarly, the right derivative of $\phi_{f,p}$ exists at $t = 0$ and equals the Hilbert–Kunz multiplicity of $A_p/f$ (see \cite[Theorem~4.4]{BST13}). Finally, we point out that similar functions encoding information about the Hilbert-Kunz multiplicity of hypersurfaces in positive characteristic have been studied also by Chiang and Hung \cite{CH98}, Meng and Mukhopadhyay \cite{MM23}, Ohta \cite{Ohta17}, and Trivedi \cite{Trivedi18, Trivedi23}.
	\end{remark}

	\subsection{Sequences in $\Gamma$ and their operators}
	
	In this subsection we recall some results from \cite[§3]{MonTexII}. Denote by  $\Lambda=(\Gamma_{\mathbb{Q}})^{\mathbb{N}}$ the ring of sequences $\u=(u_0,u_1,\dots)$ with entries in $\Gamma_{\mathbb{Q}}$ and componentwise sum and multiplication. To any element $\phi$ of the space $\Q^{\mathscr{I}}$ 	of functions from $\mathscr{I}=[0,1]\cap\left\{\frac{a}{p^e}: \ a,e\in\N\right\}$ to $\Q$ we can associate an element $\mathscr{L}$ of $\Lambda$ whose $e$-th entry is
	\begin{equation}
		\label{eq:definizione serie L}
		\mathscr{L}(\phi)_e=\sum_{i=0}^{p^e-1}\left(\phi\left(\frac{i+1}{p^e}\right)-\phi\left(\frac{i}{p^e}\right)\right)(-1)^i\lambda_i.
	\end{equation}

	So we obtain a map $\mathscr{L}:\Q^{\mathscr{I}}\rightarrow \Lambda$, whose kernel is the $1$-dimensional subspace of constant functions. We denote the image of the map $\mathscr{L}$ by $\Lambda_0=\mathscr{L}(\Q^{\mathscr{I}})$, that is actually a $\mathbb{Q}$-subalgebra of $\Lambda$. An alternative description of $\Lambda_0$ can be found in \cite[\S3]{MonTexII}.

	We endow $\Lambda$ with a $\Gamma$-module structure by introducing the following scalar product 
	\[
	w\cdot\u=\big(wu_0,\theta(w)u_1,\theta^2(w)u_2,\dots\big) 
	\]
	for any $w\in\Gamma$ and $\u=(u_0,u_1,\dots)\in\Lambda$.
	
	We also introduce two maps $R,S:\Lambda\rightarrow\Lambda$ as follows. For any $\u=(u_0,u_1,\dots)\in\Lambda$ we have
	\begin{equation}\label{eq:definition reflection and shifting}
		\begin{split}
			R(\u)&=(v_0,v_1,\dots) \ \ \text{ where } \ \ v_e=(-1)^{p^e}\lambda_{p^{e}-1}u_e;\\
			S(\u)&=(u_1,u_2,\dots).
		\end{split}
	\end{equation}
	
	The map $R$ is $\Gamma$-linear and is called \emph{reflection}, the map $S$ is called \emph{shift operator}. The name reflection is motivated by the fact that 
	\[
	\mathscr{L}(\overline{\phi})=R(\mathscr{L}(\phi))
	\]
	for any $\phi\in\Q^{\mathscr{I}}$, where $\overline{\phi}$ is the \emph{reflection}  of $\phi$ defined as
	\begin{equation}\label{eq:reflection}
		\bar{\phi}(t)=\phi(1-t)
	\end{equation}
	for every $t\in\mathscr{I}$.
	The formulas in \eqref{eq:formule moltiplicazione lambda} imply also that $R^2$ is the identity operator on $\Lambda$, and that
	\begin{equation}\label{eq:multipliction rule reflection}
		R^i(\u)R^j(\v)=R^{i+j}(\u\v)
	\end{equation}
	for every $\u,\v\in\Lambda$. The action of the shift operator on an element of $\Lambda$ of the form $\mathscr{L}(\phi)$ is more involved, and is described by the following formula (see \cite[Lemma 3.9]{MonTexII}). For any $\phi\in\Q^{\mathscr{I}}$ we have
	\begin{equation}\label{eq:scrittura esplicita shifting operator}
		S\left(\mathscr{L}(\phi)\right)=\sum_{\substack{i \ \text{even}\\ 0\leq i <p}}\lambda_i\mathscr{L}(T_{p|i}\phi)+\sum_{\substack{i \ \text{odd}\\ 0\leq i <p}}\lambda_i\mathscr{L}(\overline{T_{p|i}\phi}).
	\end{equation}
	We conclude by recalling the commuting rule \cite[(10)]{MonTexII} between the operators $R$ and $S$. If $\u\in\Lambda$, then
	\begin{equation}\label{eq:rule between R and S}
		S(R(\u))=\begin{cases}\lambda_1 S(\u) &\text{ if } p=2\\ \lambda_{p-1} R(S(\u)) &\text{ if } p>2. \end{cases}
	\end{equation}

	\begin{example}\label{ex:properties of Delta}
		Let $x\in\KK\llbracket x\rrbracket$ with corresponding function $\phi_{x,p}\in \mathbb{Q}^\mathscr{I}$, so that $\phi_{x,p}(t)=t$. We denote by $\Delta=\mathscr{L}(\phi_{x,p})\in\Lambda$ the associated sequence. Since $\phi_{x,p}(t)=t$, we sometimes call $\mathscr{L}(\phi_{x,p})=\mathscr{L}(t)$. An explicit computation shows that $\Delta=(\delta_1,p^{-1}\delta_p,p^{-2}\delta_{p^2},\dots)$. 
		
		Since $\mathscr{L}(1-t)=-\mathscr{L}(t)$, we obtain that $R(\Delta)=-\Delta$. If there is a $\phi\in\Q^{\mathscr{I}}$ such that $\u=\mathscr{L}(\phi)=(u_0,u_1,u_2,\dots)$, then $\u\cdot\Delta=\alpha(u_0)\Delta$. This can be seen combining the definition of $\mathscr{L}$ in \eqref{eq:definizione serie L} with the fact that $(-1)^i\lambda_i\delta_{p^e}=\delta_{p^e}$ for any $i<p^e$ (easy consequence of the first equation in \eqref{eq:formule moltiplicazione lambda}). In particular, $\Delta\cdot\Delta=\Delta$. 
		
	\end{example}

	\section{Limit $\phi$-functions for diagonal hypersurfaces}\label{section:limitphifunctions}
	
	We consider the family of functions $\{\phi_{f,p}\}_p$ and $\{\psi_{f,p}\}_p$ for a diagonal hypersurface $f$ in prime characteristic $p$ and we investigate the behaviour of these functions as $p$ tends to infinity. We fix the following notation throughout the section.
	
	\begin{notation}\label{notation:sectionlimitphi}
		Let $n\geq2$ and $2\leq d_1\leq \cdots\leq d_n$ be integers. Let $f=x_1^{d_1}+\cdots+x_n^{d_n}$ be a diagonal hypersurface. For any prime number $p$, we consider $f$ as a hypersurface in the power series ring $A_p=\mathbb{F}_p\llbracket x_1,\dots,x_n\rrbracket$ and the corresponding functions $\phi_{f,p}$ and $\psi_{f,p}$ as introduced in \eqref{eq:defphi_p,f} and \eqref{eq:psi}.
		We fix $d=d_1\cdots d_n$ and for each prime $p$ we write $p=\ell d+r$ with integers $\ell$ and $r$ with  $0\leq r< d$. We also set $e_i=d/d_i$.
	\end{notation}

	\subsection{The limit $\displaystyle\lim_{p\to\infty}\phi_{f,p}$}
	
	We will use repeatedly the following technical result from  \cite[Theorem 2.20]{HanMon}. We report it here for the reader's convenience.
	
	\begin{theorem}[Han--Monsky]
		\label{thm:hanmonsky0}
		For $k_1, \cdots, k_n$ positive integers with each $k_i \leq p$ and $-n + \Sigma k_i$ even, take $\gamma = \frac{k_1 + \cdots + k_n - n}{2}$ and write $\prod_i \frac{(1-x^{k_i})}{1-x} = \sum_i c_i x^i$. Then $$D_{\mathbb{F}_p}(k_1,...,k_n) = \sum\limits_{\lambda \in \mathbb{Z}} c_{\gamma - p\lambda}. $$
	\end{theorem}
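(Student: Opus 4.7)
The plan is to reduce the claim to a computation inside the representation ring $\Gamma$ from the preliminaries. The starting point is the identification
\[
D_{\mathbb{F}_p}(k_1,\ldots,k_n) \;=\; \alpha\!\left(\prod_{i=1}^n \delta_{k_i}\right),
\]
which holds because $\prod_i \delta_{k_i}$ realizes $\mathbb{F}_p[x_1,\ldots,x_n]/(x_1^{k_1},\ldots,x_n^{k_n})$ with $T$ acting as multiplication by $x_1+\cdots+x_n$, and $\alpha$ extracts $\dim_{\mathbb{F}_p}(M/TM)$. So the question becomes: compute the $\lambda_0$-coefficient of $\prod_i \delta_{k_i}$ in $\Gamma$.

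Next, expand each factor in the $\lambda$-basis. Telescoping $\lambda_j = (-1)^j(\delta_{j+1}-\delta_j)$ yields $\delta_k = \sum_{j=0}^{k-1}(-1)^j\lambda_j$, and since $k_i \le p$ every $\lambda$-index satisfies $j < p$, so the whole computation happens inside the subring $\Gamma_1$. This gives
\[
D_{\mathbb{F}_p}(k_1,\ldots,k_n) \;=\; \sum_{\substack{0 \le j_r \le k_r - 1 \\ r=1,\ldots,n}} (-1)^{j_1+\cdots+j_n}\,\alpha(\lambda_{j_1}\cdots\lambda_{j_n}).
\]

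The main step is to evaluate $\alpha(\lambda_{j_1}\cdots\lambda_{j_n})$ using the Clebsch--Gordan--type rule $\lambda_i\lambda_j = \sum_{k=j-i}^{\min\{i+j,\,2p-2-i-j\}} \lambda_k$ inside $\Gamma_1$. I would proceed by induction on $n$: iterating this rule writes $\prod_r \lambda_{j_r}$ as a nonnegative integer combination of $\lambda_k$'s whose coefficients admit a lattice-path/reflection-principle description, where at stage $r$ a walk steps by $\pm(j_r{+}1)$, reflected at the boundaries $0$ and $p$ of the strip $[0,p]$, and the $\lambda_k$-coefficient counts walks ending at position $k+1$. The $\lambda_0$-coefficient then counts walks ending at $1$, equivalently sign-vectors $(\epsilon_1,\ldots,\epsilon_n) \in \{\pm 1\}^n$ with $\sum_r \epsilon_r(j_r+1) \equiv \pm 1 \pmod{2p}$, indexed naturally by the wrap-around integer $\lambda$.

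The final combinatorial step matches this signed sum with $\sum_{\lambda \in \mathbb{Z}} c_{\gamma - p\lambda}$. By definition, $c_i$ counts tuples $(j_1,\ldots,j_n)$ with $0 \le j_r < k_r$ and $\sum j_r = i$, so $c_{\gamma - p\lambda}$ enumerates tuples whose sum lies in the $\lambda$-th shifted ``box''. The sign $(-1)^{j_1+\cdots+j_n}$ from the expansion must cancel the sign produced by each boundary reflection, which happens precisely because $-n + \sum k_i$ is even. The main obstacle I expect is carrying out this sign/reflection bookkeeping cleanly --- in particular, verifying that the parity hypothesis converts the naively-signed inductive output into the positive counting formula on the right-hand side, with the $p\lambda$-shifts placed in bijection with the number of boundary reflections; the hypothesis $k_i \le p$ is essential throughout because it is what confines the computation to $\Gamma_1$, where the multiplication rule has its simplest ``level-one'' form.
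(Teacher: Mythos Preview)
The paper does not prove this theorem: it is quoted verbatim as \cite[Theorem~2.20]{HanMon} with the remark ``We report it here for the reader's convenience,'' and no argument is given. So there is no in-paper proof to compare your proposal against.

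That said, your outline is essentially the original Han--Monsky argument. The reduction $D_{\mathbb{F}_p}(k_1,\ldots,k_n)=\alpha(\prod_i\delta_{k_i})$ and the expansion $\delta_k=\sum_{j=0}^{k-1}(-1)^j\lambda_j$ are exactly how they begin, and the hypothesis $k_i\le p$ is used, as you say, to stay inside $\Gamma_1$ where the Clebsch--Gordan rule for $\lambda_i\lambda_j$ applies. Your reflection/lattice-path interpretation of the iterated product is also the right picture. Two places where your sketch is loose: first, the endpoint condition for the $\lambda_0$-coefficient is not quite ``$\sum_r\epsilon_r(j_r+1)\equiv\pm1\pmod{2p}$'' --- the correct statement tracks the index $0$ rather than position $1$, and the reflections are at $-\tfrac12$ and $p-\tfrac12$ in Han--Monsky's normalization; second, the role of the parity hypothesis is not to ``cancel the sign produced by each boundary reflection'' but rather to guarantee that the symmetric polynomial $\prod_i(1-x^{k_i})/(1-x)$ has its middle coefficient at an integer index $\gamma$, so that the unfolded sum over $\lambda$ lands on actual coefficients $c_{\gamma-p\lambda}$. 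You have correctly identified this bookkeeping as the delicate part; if you want to complete the proof you should consult \cite[\S2, especially Theorems~2.15--2.20]{HanMon}, where the reflection count is made precise via a character-theoretic computation rather than a direct walk argument.
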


	\begin{theorem}\label{thm_15}
		Let $2\leq d_1\leq \cdots\leq d_n$ be integers and let $f=x_1^{d_1}+\cdots+x_n^{d_n}$.
		Then, the functions $\phi_{f,p}$ converge uniformly, as $p$ goes to infinity, to a piecewise polynomial function $\phi_f$ given by
		\[
		\phi_f(t)=\frac{d_1\cdots d_n}{2^n\cdot n!}\left(C_0(t)+2\sum_{\lambda\in\mathbb{Z}_{\geq1}}C_{\lambda}(t)\right),
		\]
		where $C_{\lambda}(t)$ for $\lambda\in\mathbb{Z}_{\geq0}$ is the piecewise polynomial function
		\[
		C_{\lambda}(t)=\sum(\epsilon_0\cdots\epsilon_n)\left(\epsilon_0t+\frac{\epsilon_1}{d_1}+\cdots+\frac{\epsilon_n}{d_n}-2\lambda\right)^n
		\]
		with the sum taken over all choices of $\epsilon_0,\dots,\epsilon_n\in\{\pm1\}$ with $\epsilon_0t+\frac{\epsilon_1}{d_1}+\cdots+\frac{\epsilon_n}{d_n}-2\lambda\geq0$. 
	\end{theorem}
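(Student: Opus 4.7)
The strategy is to derive the limit formula at level $e=1$ via the Han--Monsky theorem combined with a local-limit-type asymptotic, and then to extend uniformly to $[0,1]$ using equicontinuity of $\{\phi_{f,p}\}$. First I would translate the problem to the representation-ring formalism of Section~\ref{section:preliminaries}: since $\KK[x_i]/(x_i^{p^e})$ with $T$ acting as $x_i^{d_i}$ has class $d_i\,\delta_{p^e/d_i}$ in $\Gamma_{\mathbb{Q}}$ and $\alpha(-\cdot\delta_a)=\alpha_a(-)$, one has
\[
\phi_{f,p}(a/p^e)\cdot p^{ne}\;=\;d\cdot\alpha\bigl(\delta_a\,\delta_{p^e/d_1}\cdots\delta_{p^e/d_n}\bigr).
\]
Specializing to $e=1$ and expanding each $\delta_{p/d_i}$ as a convex combination of $\delta_{\lfloor p/d_i\rfloor}$ and $\delta_{\lfloor p/d_i\rfloor+1}$, the right side becomes a convex combination of Han--Monsky values $D_{\mathbb{F}_p}(a,k_1,\dots,k_n)$ with every $k_i\le p$, to which Theorem~\ref{thm:hanmonsky0} directly applies (adjusting one index by $\pm 1$ to satisfy the parity hypothesis if needed):
\[
D_{\mathbb{F}_p}(a,k_1,\dots,k_n)\;=\;\sum_{\lambda\in\mathbb{Z}}c_{\gamma-p\lambda},\qquad \gamma=\tfrac12\bigl(a+\textstyle\sum k_i-(n+1)\bigr),
\]
with $c_i$ the $x^i$-coefficient of $P(x):=\prod_{i=0}^n(1+x+\cdots+x^{k_i-1})$ (setting $k_0=a$).

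Next I perform the asymptotic analysis. Viewing $P(x)/\prod k_i$ as the probability generating function of $S=Z_0+\cdots+Z_n$ for independent $Z_i\sim\mathrm{Unif}\{0,\dots,k_i-1\}$, a local central limit estimate gives $c_i=(\prod k_j/p)\,g_Y(i/p)(1+o(1))$ uniformly for $i/p$ in compact subsets, where $g_Y$ is the density of $Y_0+\cdots+Y_n$ with $Y_0\sim\mathrm{Unif}[0,t]$ and $Y_i\sim\mathrm{Unif}[0,1/d_i]$. Combining and multiplying by $d$, as $p\to\infty$ with $a/p\to t$,
\[
\phi_{f,p}(a/p)\;\longrightarrow\;t\sum_{\lambda\in\mathbb{Z}}g_Y\!\Bigl(\tfrac{t+\sum 1/d_i}{2}-\lambda\Bigr).
\]
To identify this with the claimed formula, I substitute the standard closed form for a convolution of $n+1$ uniform densities,
\[
g_Y(u)=\frac{1}{n!\prod_i a_i}\sum_{S\subseteq\{0,\dots,n\}}(-1)^{|S|}\bigl(u-\textstyle\sum_{i\in S} a_i\bigr)_+^n\qquad(a_0=t,\ a_i=1/d_i).
\]
The change of variables $\epsilon_i=1-2\cdot\mathbf{1}_{i\in S}\in\{\pm1\}$ turns $(-1)^{|S|}$ into $\prod\epsilon_i$ and simplifies the argument of the truncated power to $(\epsilon_0 t+\sum_{i\ge1}\epsilon_i/d_i-2\lambda)/2$; pulling out the factor $1/2^n$ yields $\phi_f(t)=\frac{d}{2^n\,n!}\sum_{\lambda\in\mathbb{Z}}C_\lambda(t)$, and the involution $\epsilon_i\mapsto-\epsilon_i$ gives the symmetry $C_{-\lambda}=C_\lambda$, rewriting the sum as $C_0(t)+2\sum_{\lambda\ge1}C_\lambda(t)$.

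Uniform convergence on $[0,1]$ (and in particular at points $a/p^e$ with $e\ge2$, where Han--Monsky does not directly apply) follows from the uniform Lipschitz bound on $\phi_{f,p}$ recalled in Remark~\ref{rk:analytic properties phi and psi}: the family $\{\phi_{f,p}\}_p$ is equicontinuous, so pointwise convergence on the grids $\{a/p:0\le a\le p\}$ of spacing $1/p\to 0$ forces uniform convergence on the entire interval. The convergence of one-sided derivatives at each $t\in[0,1]$ is then a standard consequence of uniform convergence of concave functions, and at every $t$ outside the finitely many break points of the piecewise polynomial $\phi_f$ both $\partial_\pm\phi_{f,p}(t)$ converge to $\phi_f'(t)$. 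The main obstacle is the quantitative local limit estimate for $c_i$: one needs the approximation $c_i\approx(\prod k_j/p)\,g_Y(i/p)$ to be uniform in $i$ (including near the corners of $g_Y$'s support where its higher derivatives jump) and precise enough that, after the finite sum over $\lambda$ and the multiplicative factor $d$, the combinatorics align exactly with the displayed closed form. A careful quantitative local central limit theorem for sums of discrete uniforms is what supplies this bound.
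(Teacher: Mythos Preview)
Your outline is correct and reaches the same formula, but it takes a genuinely different route from the paper at the key asymptotic step. The paper, like you, reduces to $e=1$ and invokes Theorem~\ref{thm:hanmonsky0}; however, instead of expanding each $\delta_{p/d_i}$ as a convex combination of two integer $\delta$'s, it replaces the ideal $(x_1^p,\dots,x_n^p)$ by $(x_1^{d\ell},\dots,x_n^{d\ell})$ (where $p=\ell d+r$, $d=d_1\cdots d_n$), so that every $k_i=e_i\ell$ is already an integer and only a single Han--Monsky value is needed. More importantly, for the asymptotics of the coefficients $c_{\gamma-p\lambda}$ the paper does \emph{not} appeal to a local limit theorem: it writes the generating polynomial as $(1-x)^{-(n+1)}(1-x^{a_p})\prod_i(1-x^{e_i\ell})$, expands $(1-x)^{-(n+1)}=\sum_k\binom{n+k}{n}x^k$, and obtains each $c_{\gamma-p\lambda}$ \emph{exactly} as a finite alternating sum of binomial coefficients $\binom{\omega_\lambda(p)}{n}$. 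Dividing by $p^n$ and letting $p\to\infty$ term by term then yields the truncated powers in $C_\lambda(t)$ directly. This is both shorter and entirely elementary, whereas your probabilistic route---while conceptually nice, since the convolution-of-uniforms interpretation explains \emph{why} the Irwin--Hall-type density appears---requires the quantitative local CLT for discrete uniforms that you yourself flag as the main obstacle, including uniformity near the corners of the support of $g_Y$. So your approach trades a clean identity for an analytic estimate that the paper's method avoids altogether.

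One point where your proposal is too quick: the claim that convergence of one-sided derivatives at \emph{every} $t\in[0,1]$ is ``a standard consequence of uniform convergence of concave functions'' is only correct at points where the limit $\phi_f$ is differentiable. At the finitely many break points of $\phi_f$ the standard convexity argument gives only that subsequential limits of $\partial_\pm\phi_{f,p}(t)$ lie in the interval $[\partial_+\phi_f(t),\partial_-\phi_f(t)]$, not the stated one-sided convergence. (The paper treats this separately in Theorem~\ref{thm16} with an additional $O(1/p)$ estimate uniform in $t$.) Since Theorem~\ref{thm_15} itself asserts only uniform convergence, this does not affect the present proof, but you should not fold the derivative claim into it without further argument.
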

	
	\begin{proof}
		We follow Notation~\ref{notation:sectionlimitphi}, so we have $d=d_1\cdots d_n$ and for each prime $p$ we write $p=\ell d+r$ with $0\leq r< d$.
		Moreover, to ease the notations, we will denote the functions $\phi_{f,p}$ and $\phi_{f}$  by $\phi_{p}$ and $\phi$  respectively, dropping the dependence on $f$.
		
		We proceed by a series of reductions that will allow us to directly apply the formula of Theorem~\ref{thm:hanmonsky0}. We first recall that on the dense set $\{\frac{a}{p^e} \}$, the function $\phi_p(t)$ is  
		$$\phi_{p}\Big(\frac{a}{p^e}\Big) = \frac{1}{p^{en}} \dim_{\mathbb{F}_p} ( A_p/(\mathfrak{m}^{[p^e]},f^a))$$
		for every $a,e\in\mathbb{Z}_{\ge 0}$ with $a\le p^e$. 
		
		First, we show that the convergence in the statement holds pointwise. 
		
		Fix $t \in [0,1]$. For each prime $p$, choose $a_p \in \mathbb{Z}$ with $0 \leq a_p \leq p$ such that $\frac{a_p}{p} \rightarrow t$ as $p \rightarrow \infty$. We will be more precise on the choice of $a_p$ later on, choosing $a_p$ to be either $\lceil pt \rceil$ or $\lfloor pt \rfloor$.  Then for each $p$, we have that
		$$|\phi_p(t) - \phi(t)| \leq \left|\phi_p(t) - \phi_p\left(\frac{a_p}{p}\right)\right| + \left|\phi_p\left(\frac{a_p}{p}\right) - \phi(t)\right| \leq \frac{C}{p} + \left|\phi_p\left(\frac{a_p}{p}\right) - \phi(t)\right| $$ where the last inequality comes from the fact that $\phi_p$ is Lipschitz (with a constant $C$ that can be taken independent of $p$ thanks to \cite[Corollary 3.3]{BST13}) and, thanks to our choice of the $a_p$, we have that $|t-\frac{a_p}{p}| \leq \frac{1}{p}$. So, it is enough to understand the convergence of the $\phi_p(\frac{a_p}{p})$ to $\phi(t)$.
		
		We now make another reduction. For any integer $0 \leq a \leq p$, consider the function $\tilde{\phi}_p(\frac{a}{p}) := \frac{1}{p^{n}} \dim_{\mathbb{F}_p} ( A_p/(x_1^{d\ell},\cdots,x_n^{d\ell},f^{a}))$. The short exact sequence
		$$ 0 \rightarrow \frac{(x_1^{d\ell}, \cdots, x_n^{d\ell},f^{a})}{(x_1^{p}, \cdots, x_n^{p},f^{a})} \rightarrow  \frac{A_p}{(\mathfrak{m}^{[p]},f^{a})} \rightarrow  \frac{A_p}{(x_1^{d\ell}, \cdots, x_n^{d\ell},f^{a})} \rightarrow 0 $$
		yields the inequality
		$$\left|\phi_p\left(\frac{a}{p}\right) - \tilde{\phi}_{p}\left(\frac{a}{p}\right)\right| \leq \frac{1}{p^n} \dim_{\mathbb{F}_p} \left(\frac{(x_1^{d\ell}, \cdots, x_n^{d\ell},f^{a})}{(x_1^{p}, \cdots, x_n^{p},f^{a})}\right) \leq  \frac{1}{p^n}\dim_{\mathbb{F}_p} \left(\frac{(x_1^{d\ell}, \cdots, x_n^{d\ell})}{(x_1^{p}, \cdots, x_n^{p})}\right)\leq \frac{C}{p} $$
		for some constant $C>0$. This last inequality follows from the fact that the quotient $(x_1^{d\ell}, \cdots, x_n^{d\ell})/(x_1^{p}, \cdots, x_n^{p})$ has an explicit basis of monomials of the form $x_1^{\beta_1}\cdots x_n^{\beta_n}$, where for each $i$ we have that $\beta_i<p$, and at least one of the $\beta_i\geq d\ell$. Since there are at least $p^n-(d\ell)^n= p^n - (p-r)^n =O(p^{n-1})$ such monomials, dividing by $p^n$ gives the desired inequality.
		
		Thus, we have reduced to showing the convergence of $\tilde{\phi}_p(\frac{a_p}{p})$ to $\phi(t)$.
		Now, let $M$ be the $\mathbb{F}_p$-object $A_p/(x_1^{d\ell},...,x_n^{d\ell})$ with $T$ acting as multiplication by $f$. Then
		\begin{equation}\label{equ:first computation phi tilde}
			\tilde{\phi}_p\left(\frac{a_p}{p}\right) = \frac{1}{p^{n}} \dim_{\mathbb{F}_p} ( A_p /(x_1^{d\ell},\cdots,x_n^{d\ell},f^{a_{p}})) = \frac{1}{p^n} \alpha_{a_p}(M).
		\end{equation}
		
		Recall that $e_i = \frac{d}{d_i}$ for each $1\leq i \leq n$. Then, $M$ is the $\mathbb{F}_p$-object $\delta_{\frac{d\ell}{d_1}}\cdots \delta_{\frac{d\ell}{d_n}}=d\delta_{e_1\ell}...\delta_{e_n\ell}$, and so we compute that
		$$\alpha_{a_p}(M) = d\cdot \dim_{\mathbb{F}_p}\left(\frac{\mathbb{F}_p[x_1,...,x_n]}{(x_1^{e_1\ell}, \cdots,x_n^{e_n \ell}, (\Sigma x_i)^{a_p})}\right) = d\cdot\dim_{\mathbb{F}_p}\left(\frac{\mathbb{F}_p[x_1,...,x_n,y]}{(x_1^{e_1\ell}, \cdots,x_n^{e_n \ell}, y^{a_p},y + \Sigma x_i)}\right).$$ But this is $dD_{\mathbb{F}_p}(a_p, e_1\ell, \cdots, e_n\ell)$. So now we have that 
		$$\tilde{\phi}\Big(\frac{a_p}{p}\Big) = \frac{d}{p^n}D_{\mathbb{F}_p}(a_p, e_1\ell, \cdots, e_n\ell).$$ 
		We are now in a position to apply Theorem~\ref{thm:hanmonsky0}. If we write $$(1-x)^{-n-1}(1-x^{a_p})\prod_i(1-x^{e_i\ell}) = \sum_i c_ix^i,$$ then $D_{\mathbb{F}_p}(a_p, e_1\ell, \cdots, e_n\ell) = \sum\limits_{\lambda \in \mathbb{Z}} c_{\gamma - p\lambda}$, where $\gamma = \frac{-(n+1) + a_p + \Sigma e_i\ell}{2}$. This is where the choice of $a_p$ matters: we choose $a_p$ = $\lceil pt \rceil$ or $\lfloor pt \rfloor$ so that $\gamma$ is an integer (equivalently, so that the parity condition required in Theorem \ref{thm:hanmonsky0} is satisfied). Now, $(1-x)^{-n-1}(1-x^{a_p})\prod_i(1-x^{e_i\ell})$ is a symmetric polynomial of even degree with "middle" coefficient $c_\gamma$, so it is enough to understand the coefficients $c_{\gamma - \lambda p}$ for $\lambda \geq 0$, as $c_{\gamma - \lambda p} = c_{\gamma + \lambda p}$.
		By the generalized binomial theorem,  $$(1-x)^{-n-1} = \sum\limits_{k=0}^{\infty} {n + k \choose n}x^k,$$ and so
		\begin{eqnarray*}
			c_\gamma & = & {n + \gamma \choose n} - {n + \gamma - a_p \choose n} - \sum {n + \gamma - e_i\ell \choose n} + ... \\
			& = & \sum (\epsilon_0\cdots \epsilon_{n})
			{n + \frac{1}{2}(\epsilon_{0}a_p
				+ \epsilon_1 e_1\ell + \cdots + \epsilon_n e_n\ell - n -1) \choose n}, \\
		\end{eqnarray*}
		where the sum is taken over all choices of $\epsilon_i\in\{\pm1\}$. More generally,
		\begin{equation}\label{equ:explicit coefficients c gamma - lambda p}
			c_{\gamma - \lambda p} = \sum (\epsilon_0 \cdots \epsilon_{n})
			{n + \frac{1}{2}(\epsilon_{0}a_p
				+ \epsilon_1 e_1\ell + \cdots + \epsilon_n e_n\ell - n -1) -\lambda p \choose n}.
		\end{equation}

		Write $\omega_\lambda(p)= n + \frac{1}{2}(\epsilon_{0}a_p
		+ \epsilon_1 e_1\ell + \cdots + \epsilon_n e_n\ell - n -1) -\lambda p$. We now examine the behavior of $\frac{1}{p^n}{\omega_\lambda(p) \choose n}$ as $p \rightarrow \infty$ based on the sign of 
		\begin{equation}\label{equ:limit omega over p}
			\lim_{p \to\infty} \frac{\omega_\lambda(p)}{p} = \frac{1}{2}(\epsilon_{0}t +  \frac{\epsilon_1}{d_1} + \cdots +  \frac{\epsilon_n}{d_n} - 2\lambda).
		\end{equation}
		If this limit is strictly less than $0$, then for all large enough $p$, $\omega_\lambda(p)$ is negative, and so the corresponding binomial coefficient is $0$ and does not contribute. Otherwise, for $p$ large enough
		
		\begin{eqnarray}\label{equ:limit binomial omega}
			\frac{1}{p^n}{\omega_\lambda(p) \choose n}
			& = & \frac{1}{n!p^n}(\omega_\lambda(p))(\omega_\lambda(p) - 1) \cdots (\omega_\lambda(p) - n + 1) \nonumber\\
			& = & \frac{1}{n!}\left(\left(\frac{\omega_\lambda(p)}{p}\right)\left(\frac{\omega_\lambda(p)-1}{p}\right) \cdots \left(\frac{\omega_\lambda(p) -n + 1}{p}\right)\right)
		\end{eqnarray}
		which limits to $\frac{1}{2^n n!}(\epsilon_{0}t + \epsilon_1 \frac{1}{d_1} + \cdots + \epsilon_n \frac{1}{d_n} - 2\lambda)^{n}$ as $p \rightarrow \infty$. So, we see that for all $\lambda \geq 0$, $\lim_{p \to\infty} \frac{1}{p^n}c_{\gamma - \lambda p} = \frac{1}{2^n n!}C_{\lambda}(t)$, therefore
		
		\begin{eqnarray*}
			\lim_{p \to\infty} \tilde{\phi}\left(\frac{a_p}{p}\right)  =  \lim_{p \to\infty} \frac{d}{p^n}D_{\mathbb{F}_p}(a_p, e_1\ell, \cdots, e_n\ell) 
			=  \lim_{p \to\infty} \frac{d}{p^n} \sum\limits_{\lambda \in \mathbb{Z}} c_{\gamma - p\lambda} 
			= \frac{d}{2^n n!}(C_0(t) + 2(\sum\limits_{\lambda \geq 1}C_{\lambda}(t))),
		\end{eqnarray*}
		and so $\lim_{p \to\infty} \phi_{p}(t) = \phi(t)$, as desired.
		
		Finally, recalling that the functions $\phi_p$ are bounded by the constant $1$ and Lipschitz continuous with a constant that is independent of $p$, the uniform convergence of $\{\phi_p\}_p$ to $\phi$ follows from a standard application of Arzelà-Ascoli's theorem (\textit{e.g.} see \cite[Exercise 5 on page 293]{MunkresTopology}). Otherwise, for some $\epsilon > 0$ we could exhibit a subsequence $\{\phi_{p_i}\}_i$ and points $x_i \in [0,1]$ so that  $| \phi_{p_i}(x_i) - \phi(x_i) | > \epsilon$ and $p_{i+1} > p_i$ for $i \in \mathbb{Z}_{>0}$ having no uniformly convergent subsubsequence.
	\end{proof}
	
	\begin{remark}\label{rk:points where phi and psi change polynomial}
		It is useful to remark that, since $\epsilon_0t+\frac{\epsilon_1}{d_1}+\cdots+\frac{\epsilon_n}{d_n}-2\lambda$ changes signs precisely at $t=-\epsilon_0(\frac{\epsilon_1}{d_1}+\cdots+\frac{\epsilon_n}{d_n}-2\lambda)$, the points where the functions $\phi_f$ and $\psi_f$ change polynomial expression are precisely $\{\pm\frac{1}{d_1}\pm\dots\pm\frac{1}{d_n}\pm 2\lambda\}\cap[0,1]$ for $\lambda\in\mathbb{Z}$.
	\end{remark}
	
	\begin{example}\label{ex-x2y3}
		Let $f=x^2+y^3$, we compute the limit function $\phi_{f}$ with the formula of Theorem~\ref{thm_15}. We have $n=2$, $d_1=2$, and $d_2=3$.
		First, observe that since $\frac12+\frac13<1$, we have $C_{\lambda}=0$ for all $\lambda\geq1$. Thus, we obtain
		$\phi_{f}(t)=\frac{2\cdot 3}{4\cdot 2!}C_0(t)=\frac{3}{4}C_0(t)$, and we are left to compute $C_0(t)$. From Remark~\ref{rk:points where phi and psi change polynomial}, we know that $C_0(t)$ changes polynomial expression precisely at the points $\{\frac16,\frac56\}$. For $0\leq t\leq\frac{1}{6}$, we have that
		\[
		\begin{split}
			C_{0}(t)&=\sum(\epsilon_0\epsilon_1\epsilon_2)\left(\epsilon_0t+\frac{\epsilon_1}{2}+\frac{\epsilon_2}{3}\right)^2  \\
			&=\left(t+\frac{1}{2}+\frac{1}{3}\right)^2-\left(-t+\frac{1}{2}+\frac{1}{3}\right)^2+\left(-t+\frac{1}{2}-\frac{1}{3}\right)^2-\left(t+\frac{1}{2}-\frac{1}{3}\right)^2
			=\frac{8}{3}t.    
		\end{split}
		\]
		Thus, $\phi_{f}(t)=2t$ for $0\leq t\leq\frac{1}{6}$. Similarly, one can compute the remaining two cases.    
		We obtain that the limit function of $f=x^2+y^3$ is given by
		\[
		\phi_{f}(t)=\begin{cases}
			2t &\text{ for } 0\leq t\leq \frac16\\
			-\frac{3}{2}t^2+\frac{5}{2}t-\frac{1}{24} &\text{ for } \frac16\leq t\leq \frac56\\
			1 &\text{ for } \frac56\leq t\leq 1.
		\end{cases}
		\]
	\end{example}

	%%%%%%%%%%%%%%%%
	%%%%%%%%%%%%%%%%
	\subsection{The limit derivatives of $\phi_{f,p}$}

	In this section, we study the limit of the left and right derivatives of the function $\phi_{f,p}$ as $p$ tends to infinity and we prove that they converge to the derivatives of $\phi_f$. 
	Before proving this, we need a series of preparatory lemmas. These are necessary to show that the analogous method of replacing $\phi_{f,p}(t)$ by $\tilde{\phi}_{f,p}(t)$ that we used in the proof of Theorem~\ref{thm_15} works for the derivatives as well. We keep notations as in Notation~\ref{notation:sectionlimitphi}.
	
	\begin{lemma}\label{lemma:colonproperty}
		Let $I\subseteq A_p$ be an ideal and $a,b\in\mathbb{Z}_{>0}$.
		Then $(I, f^{a+b}):f^a=((I:f^a),f^b)$.
	\end{lemma}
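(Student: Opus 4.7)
The statement is a purely ideal-theoretic identity, and my plan is to prove it by a direct double-inclusion argument using the definitions of ideal sum and colon ideal; no positive-characteristic machinery is needed.

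For the inclusion $((I:f^a),f^b) \subseteq (I,f^{a+b}):f^a$, I would handle generators separately. If $g \in I:f^a$, then $gf^a \in I \subseteq (I,f^{a+b})$, so $g$ lies in the right-hand side; and $f^b \cdot f^a = f^{a+b} \in (I,f^{a+b})$, so $f^b$ also lies in the right-hand side. Since both types of generators of the left ideal multiply $f^a$ into $(I,f^{a+b})$, the inclusion follows.

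For the reverse inclusion $(I,f^{a+b}):f^a \subseteq ((I:f^a),f^b)$, suppose $g \in (I,f^{a+b}):f^a$. By definition there exist $i \in I$ and $h \in A_p$ with $gf^a = i + hf^{a+b}$. Rearranging,
\[
(g - hf^b)\,f^a = i \in I,
\]
so $g - hf^b \in I:f^a$, hence $g = (g - hf^b) + hf^b \in (I:f^a) + (f^b) = ((I:f^a),f^b)$, as required.

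There is really no obstacle here; the whole proof is the single observation that $f^{a+b} = f^a \cdot f^b$, which lets one trade a multiple of $f^{a+b}$ on the right for a multiple of $f^b$ after canceling an $f^a$. I would present both inclusions in a few lines and close the proof.
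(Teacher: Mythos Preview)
Your proof is correct and essentially identical to the paper's own argument: both directions are handled by the same double-inclusion, with the key step being the rearrangement $gf^a = i + hf^{a+b} \Rightarrow (g - hf^b)f^a \in I$. The only cosmetic difference is that you treat the easy inclusion via generators while the paper writes out a general element, and you present the two inclusions in the opposite order.
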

	\begin{proof}
		First, if $x\in ((I, f^{a+b}):f^a)$ we can write $xf^a=i+yf^{a+b}$ for some $i\in I$. Rearranging, we have that $f^a(x-yf^b)\in I$, and so $x-yf^b\in (I:f^a)$. Hence, $x\in ((I:f^a),f^b)$ as required.
		
		Conversely, if $x\in ((I:f^a),f^b)$ we can write $x=y+zf^b$ with $y\in (I:f^a)$. So, $xf^a=yf^a+zf^{a+b}\in (I,f^{a+b})$. Thus, $x$ lies in $(I, f^{a+b}):f^a$, and we are done.
	\end{proof}
	
	\begin{lemma}\label{lemma:differenceOp-2}
		For any integer $2\leq a \leq p$, we have that 
		\[
		\dim_{\mathbb{F}_p}\left(\frac{A_p}{(x_1^p,\dots,x_n^p,f^a):f^{a-2}}\right)-\dim_{\mathbb{F}_p}\left(\frac{A_p}{(x_1^{d\ell},\dots,x_n^{d\ell},f^a):f^{a-2}}\right)=O(p^{n-2})
		\]
	\end{lemma}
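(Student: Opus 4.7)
The plan is to recast the stated difference as a second-order discrete difference of certain dimensions, and then to bound it via a refinement of the asymptotic analysis behind Theorem~\ref{thm_15}.

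First, for any $\mathfrak{m}$-primary ideal $I$ of $A_p$, the four-term exact sequence induced by multiplication by $f^{a-2}$ on $A_p/(I,f^a)$,
\[
0\longrightarrow \big((I,f^a):f^{a-2}\big)/(I,f^a) \longrightarrow A_p/(I,f^a) \xrightarrow{\cdot f^{a-2}} A_p/(I,f^a) \longrightarrow A_p/(I,f^{a-2})\longrightarrow 0,
\]
yields, by alternating sum of dimensions, the identity
\[
\dim_{\mathbb{F}_p} A_p/\big((I,f^a):f^{a-2}\big)=\dim_{\mathbb{F}_p} A_p/(I,f^a)-\dim_{\mathbb{F}_p} A_p/(I,f^{a-2}).
\]
Applying this identity for both $I=(x_1^p,\dots,x_n^p)$ and $I=(x_1^{d\ell},\dots,x_n^{d\ell})$ and subtracting, the claim reduces to showing $\Delta(a)-\Delta(a-2)=O(p^{n-2})$, where $\Delta(b):=\dim_{\mathbb{F}_p} A_p/(I_p,f^b)-\dim_{\mathbb{F}_p} A_p/(J_p,f^b)$, with $I_p=(x_1^p,\dots,x_n^p)$ and $J_p=(x_1^{d\ell},\dots,x_n^{d\ell})$.

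Second, by the representation-ring identification in Section~\ref{section:preliminaries}, the $\mathbb{F}_p$-object $A_p/I_p$ with $T$ acting as multiplication by $f$ corresponds to $d\prod_i\delta_{p/d_i}\in\Gamma_{\mathbb{Q}}$, and $A_p/J_p$ to $d\prod_i\delta_{e_i\ell}$, so $\Delta(b)=d\cdot\alpha_b\bigl(\prod_i\delta_{p/d_i}-\prod_i\delta_{e_i\ell}\bigr)$. Writing $p/d_i=e_i\ell+r/d_i$, the fractional correction $\delta_{p/d_i}-\delta_{e_i\ell}$ is, after repeatedly applying $\delta_{j+1}-\delta_j=(-1)^j\lambda_j$, a bounded $\mathbb{Q}$-linear combination of $\lambda$-classes of index in $[e_i\ell,\,e_i\ell+\lceil r/d_i\rceil]$. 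A telescoping expansion of $\prod_i\delta_{p/d_i}-\prod_i\delta_{e_i\ell}$ therefore writes $\Delta(b)$ as a bounded $\mathbb{Q}$-linear combination of terms of the form $\alpha_b(\lambda_k\cdot Y)$, where $k=\Theta(p)$ and $Y$ is a product of $\delta$-classes of index $\Theta(p)$.

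Finally, using again $\lambda_k=(-1)^k(\delta_{k+1}-\delta_k)$, each $\alpha_b(\lambda_k\cdot Y)$ is a difference of two $D_{\mathbb{F}_p}$-values, each computable via Theorem~\ref{thm:hanmonsky0} as a sum of binomial coefficients $\binom{\omega_\lambda(p)}{n}$ with $\omega_\lambda(p)=O(p)$. The differencing in $k$ implicit in $\lambda_k$ saves one factor of $p$: it replaces $\binom{\omega}{n}=O(p^n)$ by a discrete derivative essentially of the form $\binom{\omega}{n-1}=O(p^{n-1})$. The subsequent step-$2$ differencing in $b$ saves a further factor of $p$, replacing this in turn by $\binom{\omega}{n-2}=O(p^{n-2})$. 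Summing the bounded number of such contributions gives $\Delta(a)-\Delta(a-2)=O(p^{n-2})$. The main obstacle is the uniform-in-$a$ control of these binomial differences near the finitely many breakpoints where some $\omega_\lambda(p)$ changes sign; this is handled as in the proof of Theorem~\ref{thm_15}, since the exceptional values of $a$ form a set of bounded cardinality and contribute only $O(p^{n-2})$ in total.
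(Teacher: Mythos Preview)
Your reduction via the multiplication-by-$f^{a-2}$ exact sequence to the second difference $\Delta(a)-\Delta(a-2)$ is correct and clean, and the telescoping expansion of $\prod_i\delta_{p/d_i}-\prod_i\delta_{e_i\ell}$ is a reasonable opening move. But the argument breaks down at the point where you invoke Theorem~\ref{thm:hanmonsky0}. That theorem requires the parity condition that $-(n+1)+\sum k_i$ be even; when you unpack $\lambda_k=(-1)^k(\delta_{k+1}-\delta_k)$ and feed the resulting $D_{\mathbb{F}_p}$-values into Han--Monsky, exactly one of the two terms has the wrong parity and the binomial formula you quote does not apply to it. You cannot choose the parity of $a$ here as was done in the proof of Theorem~\ref{thm_15}, since the lemma must hold for every $2\le a\le p$. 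The subsequent heuristic that ``differencing saves a factor of $p$ twice'' is therefore not supported: you have written down four $D_{\mathbb{F}_p}$-values, only two of which you can actually compute by the cited formula, and no mechanism is given for the other two. The remark about breakpoints being ``a set of bounded cardinality'' is also misplaced: you are bounding a quantity for a single fixed $a$, not summing over $a$, so cardinality considerations do not enter.

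For comparison, the paper's proof is entirely different and avoids Han--Monsky. It first applies Lemma~\ref{lemma:colonproperty} to rewrite $(I,f^a):f^{a-2}=\big((I:f^{a-2}),f^2\big)$ for each of the two ideals $I$, so that the difference in question becomes the length of a quotient of two ideals both containing $f^2$. That quotient is then trapped inside $(J:\delta)/J$ for $J=\big((x_1^p,\dots,x_n^p):f^{a-2},f^2\big)$ and the explicit monomial $\delta=x_1^r\cdots x_n^r$; a short exact sequence converts this to $\dim_{\mathbb{F}_p} A_p/(J,\delta)$, which is bounded above by $\dim_{\mathbb{F}_p} A_p/\big((x_1^p,\dots,x_n^p),f^2,\delta\big)$. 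The latter is shown to be $O(p^{n-2})$ by a short filtration (of length at most $(d+1)n$, independent of $p$) whose successive quotients are controlled by passing to an initial ideal. This argument is elementary commutative algebra and is uniform in $a$ by construction, whereas your route would require substantial additional work (an odd-parity analogue of Theorem~\ref{thm:hanmonsky0}, and a rigorous second-difference estimate uniform in $a$) to be completed.
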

	
	\begin{proof}
		By applying Lemma~\ref{lemma:colonproperty}, the difference in the left hand side is equal to
		\[
		\dim_{\mathbb{F}_p}\left(\frac{A_p}{\left((x_1^p,\dots,x_n^p):f^{a-2},f^{2}\right)}\right)-\dim_{\mathbb{F}_p}\left(\frac{A_p}{\left((x_1^{d\ell},\dots,x_n^{d\ell}):f^{a-2},f^{2}\right)}\right),
		\]
		which in turn is equal to 
		\[
		\dim_{\mathbb{F}_p}\frac{\left((x_1^{d\ell},\dots,x_n^{d\ell}):f^{a-2},f^{2}\right)}{\left((x_1^p,\dots,x_n^p):f^{a-2},f^{2}\right)}
		\]
		via the standard short exact sequence
		\[
		0\rightarrow\frac{\left((x_1^{d\ell},\dots,x_n^{d\ell}):f^{a-2},f^{2}\right)}{\left((x_1^p,\dots,x_n^p):f^{a-2},f^{2}\right)}\rightarrow\frac{A_p}{\left((x_1^p,\dots,x_n^p):f^{a-2},f^{2}\right)}\rightarrow\frac{A_p}{\left((x_1^{d\ell},\dots,x_n^{d\ell}):f^{a-2},f^{2}\right)}\rightarrow0.
		\]
		Now, we fix $\delta=x_1^r\cdots x_n^r$ and $J=\left((x_1^p,\dots,x_n^p):f^{a-2},f^{2}\right)$.
		Observe that 
		\[\left((x_1^{d\ell},\dots,x_n^{d\ell}):f^{a-2},f^{2}\right)\subseteq J:\delta,\]
		since if $x\in (x_1^{d\ell},\dots,x_n^{d\ell}):f^{a-2}$ then $\delta xf^{a-2}$ is in $(x_1^p,\dots,x_n^p)$, so $\delta x\in J$.
		Thus, we have
		\[
		\dim_{\mathbb{F}_p}\frac{\left((x_1^{d\ell},\dots,x_n^{d\ell}):f^{a-2},f^{2}\right)}{\left((x_1^p,\dots,x_n^p):f^{a-2},f^{2}\right)}\leq\dim_{\mathbb{F}_p}\frac{J:\delta}{J}.
		\]
		From the short exact sequence
		\[
		0\rightarrow (J:\delta)/J\rightarrow A_p/J\xrightarrow{\ \cdot \delta} A_p/J\rightarrow A_p/(J,\delta)\rightarrow0
		\]
		by taking dimensions we obtain $\dim_{\mathbb{F}_p}\left((J:\delta)/J\right)=\dim_{\mathbb{F}_p}A_p/(J,\delta)$,
		which is less than or equal to
		\[
		\dim_{\mathbb{F}_p}\left(A_p/\left((x_1^p,\dots,x_n^p),f^2,\delta\right)\right).
		\]
		So, it is enough to prove that this last dimension is $O(p^{n-2})$.
		
		Let $I=(x_1^p,\dots,x_n^p,f^2)$. We have a filtration
		\begin{equation}\label{eq:filtration}
			(I,\delta)\subseteq (I,x_1^r\cdots x_n^{r-1})\subseteq\cdots\subseteq (I,x_1)\subseteq A_p,
		\end{equation}
		where at each step we systematically decrease the power of the last remaining $x_i$ in the second generator by one. Notice that there are $(r+1)n\leq (d+1)n$ inclusions in this sequence, a number which is independent on $p$.
		For any two consecutive terms in the previous sequence,
		\[\begin{split}
			\dim_{\mathbb{F}_p}\left(\frac{(I,x_1^{r}\cdots x_i^{j-1})}{(I,x_1^{r}\cdots x_i^{j})}\right)&=\dim_{\mathbb{F}_p}\left(\frac{A_p}{\left((I,x_1^r\cdots x_i^{j-1}):x_1^r\cdots x_i^j\right)}\right)\\ &\leq \dim_{\mathbb{F}_p}\left(A_p/(I,x_i)\right)= \dim_{\mathbb{F}_p}\left(A_p/(x_1^p,\dots,x_n^p,f^2,x_i)\right)
		\end{split}
		\]
		which we can readily bound by passing to the initial ideal with respect to an appropriate term order (\textit{cf.} \cite[Theorem 15.3]{EisenbudCommutativeAlgebra}).
		More precisely, we have that $A_p/(x_1^p,\dots, x_n^p,x_i)$ has a $\mathbb{F}_p$-vector basis given by monomials of the form 
		\begin{equation}\label{eq:basismonomials}
			\left\{x_1^{\beta_1}\cdots x_{i-1}^{\beta_{i-1}}x_{i+1}^{\beta_{i+1}}\cdots x_{n}^{\beta_{n}}\mid \ 0\leq\beta_1,\dots,\beta_n\leq p-1\right\},
		\end{equation}
		in particular its dimension is $O(p^{n-1})$.
		Now, recall that $f=x_1^{d_1}+\cdots+x_n^{d_n}$ and consider the image of $f$ in the quotient ring $A_p/(x_1^p,\dots, x_n^p,x_i)$. The largest $d_j$ remaining (which is  $d_{n-1}$ if $i=n$ and $d_n$ otherwise) appears in $f^2$ with exponent $2d_j$, and, in any other term containing $x_j$ in $f^2$, $x_j$ appears to a power which is not larger than $2d_j$. Thus, we can write a spanning set of $A_p/(x_1^p,\dots,x_n^p,f^2,x_i)$ with the monomials as in \eqref{eq:basismonomials}, but with $\beta_j\leq 2 d_j$ a constant independent of $p$.
		Thus, we have $\dim_{\mathbb{F}_p}\left(A_p/(x_1^p,\dots,x_n^p,f^2,x_i)\right)=O(p^{n-2})$.
		Therefore, the $\mathbb{F}_p$-dimension of any two consecutive terms in the filtration \eqref{eq:filtration} is $O(p^{n-2})$. Since, there are no more than $(d+1)n$ terms in the filtration, a number independent of $p$, we have that  $\dim_{\mathbb{F}_p}\left(A_p/\left((x_1^p,\dots,x_n^p),f^2,\delta\right)\right)=O(p^{n-2})$, which concludes the proof.
	\end{proof}
	
	\begin{lemma}\label{lemma:derivativesquotient}
		Let $\mu_p:[a,b]\rightarrow[0,1]$ be a sequence of decreasing convex functions indexed by primes $p$ converging to a $C^1$ function $\mu$. Furthermore, suppose that for fixed $t$ and for each $p$ there exist $a<\alpha_p<\beta_p<t<\gamma_p<\delta_p<b$ such that the quotients $\frac{\mu_p(\beta_p)-\mu_p(\alpha_p)}{\beta_p-\alpha_p}$ and $\frac{\mu_p(\delta_p)-\mu_p(\gamma_p)}{\delta_p-\gamma_p}$ both converge to $\mu'(t)$ as $p\rightarrow\infty$. Then, $\partial_{-}\mu_p(t)$ and $\partial_{+}\mu_p(t)$ converge to $\mu'(t)$ as $p\rightarrow\infty$.
	\end{lemma}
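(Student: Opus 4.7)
The plan is to exploit the convexity of each $\mu_p$ via the three-chord inequality to sandwich the one-sided derivatives $\partial_{-}\mu_p(t)$ and $\partial_{+}\mu_p(t)$ between the two given secant slopes, and then conclude by the squeeze theorem from the hypothesis that both of these slopes converge to $\mu'(t)$.

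First I would record the standard consequences of convexity of $\mu_p$. If $x<y<z$ are points in $[a,b]$, the three-chord inequality reads
$$\frac{\mu_p(y)-\mu_p(x)}{y-x} \;\leq\; \frac{\mu_p(z)-\mu_p(y)}{z-y}.$$
Two facts extracted from this will be used repeatedly: (i) the map $u \mapsto \frac{\mu_p(t)-\mu_p(u)}{t-u}$ is non-decreasing on $\{u<t\}$ with supremum $\partial_{-}\mu_p(t)$, and symmetrically $v \mapsto \frac{\mu_p(v)-\mu_p(t)}{v-t}$ is non-decreasing on $\{v>t\}$ with infimum $\partial_{+}\mu_p(t)$; (ii) $\partial_{-}\mu_p(t) \leq \partial_{+}\mu_p(t)$.

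Applying the three-chord inequality to $\alpha_p<\beta_p<t$ and then (i) at $u=\beta_p$ gives
$$\frac{\mu_p(\beta_p)-\mu_p(\alpha_p)}{\beta_p-\alpha_p} \;\leq\; \frac{\mu_p(t)-\mu_p(\beta_p)}{t-\beta_p} \;\leq\; \partial_{-}\mu_p(t),$$
while applying it symmetrically to $t<\gamma_p<\delta_p$ and (i) at $v=\gamma_p$ gives
$$\partial_{+}\mu_p(t) \;\leq\; \frac{\mu_p(\gamma_p)-\mu_p(t)}{\gamma_p-t} \;\leq\; \frac{\mu_p(\delta_p)-\mu_p(\gamma_p)}{\delta_p-\gamma_p}.$$
Inserting (ii) in the middle chains these together into the double inequality
$$\frac{\mu_p(\beta_p)-\mu_p(\alpha_p)}{\beta_p-\alpha_p} \;\leq\; \partial_{-}\mu_p(t) \;\leq\; \partial_{+}\mu_p(t) \;\leq\; \frac{\mu_p(\delta_p)-\mu_p(\gamma_p)}{\delta_p-\gamma_p}.$$

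Since by hypothesis both the leftmost and the rightmost secant slopes converge to $\mu'(t)$ as $p\to\infty$, the squeeze theorem forces $\partial_{-}\mu_p(t)$ and $\partial_{+}\mu_p(t)$ to converge to $\mu'(t)$ as well. There is no serious obstacle in this argument, which is a formal squeeze once the three-chord inequality is unpacked; the content of the lemma is really the availability of the four points $\alpha_p,\beta_p,\gamma_p,\delta_p$ surrounding $t$, and this is what must be verified in each concrete application (such as the reduction from $\phi_{f,p}$ to $\tilde\phi_{f,p}$ set up in the previous lemmas).
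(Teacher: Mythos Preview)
Your argument is correct and coincides with the paper's own proof: both use the three-chord inequality from convexity to obtain the sandwich
\[
\frac{\mu_p(\beta_p)-\mu_p(\alpha_p)}{\beta_p-\alpha_p}\;\le\;\partial_{-}\mu_p(t)\;\le\;\partial_{+}\mu_p(t)\;\le\;\frac{\mu_p(\delta_p)-\mu_p(\gamma_p)}{\delta_p-\gamma_p}
\]
and then squeeze. The only cosmetic difference is which intermediate secant you pass through (you use $\beta_p,\gamma_p$, the paper uses $\alpha_p,\delta_p$); also, you correctly observe that the ``decreasing'' hypothesis plays no role in the actual estimates.
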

	
	\begin{proof}
		Since $\mu_p$ is decreasing and convex, for each $p$ we have
		\[
		\partial_{+}\mu_p(t)\leq \frac{\mu_p(\delta_p)-\mu_p(t)}{\delta_p-t}\leq\frac{\mu_p(\delta_p)-\mu_p(\gamma_p)}{\delta_p-\gamma_p}
		\]
		and
		\[
		\partial_{-}\mu_p(t)\geq \frac{\mu_p(t)-\mu_p( \alpha_p)}{t-\alpha_p}\geq\frac{\mu_p(\beta_p)-\mu_p(\alpha_p)}{\beta_p-\alpha_p}.
		\]
		Combining these inequalities with the fact that $\partial_{-}\mu_p(t)\leq \partial_{+}\mu_p(t)$, we get that
		\begin{equation}\label{eq:disuguaglianza derivate parziali}
			\frac{\mu_p(\beta_p)-\mu_p(\alpha_p)}{\beta_p-\alpha_p}\leq \partial_{-}\mu_p(t)\leq \partial_{+}\mu_p(t)\leq \frac{\mu_p(\delta_p)-\mu_p(\gamma_p)}{\delta_p-\gamma_p}.
		\end{equation}
		Taking the limit for $p\rightarrow\infty$ yields the desired result.
	\end{proof}
	
	We are now ready to state and prove the main result of this subsection.
	
	\begin{theorem}\label{thm16}
		Let $\phi_{f,p}$ and $\phi_f$ be as in Theorem~\ref{thm_15}. Then, for any $t\in[0,1]$, the sequence of derivatives $\partial_{-}\phi_{f,p}(t)$ converges to $\partial_{-}\phi_f(t)$ and $\partial_{+}\phi_{f,p}(t)$ converges to $\partial_{+}\phi_f(t)$. In particular, for all but finitely many explicit points, both sequences converge to $\phi_f'(t)$.
	\end{theorem}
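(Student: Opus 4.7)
The plan is to apply Lemma~\ref{lemma:derivativesquotient} to the sequence $\psi_{f,p}=1-\phi_{f,p}$, which is convex, decreasing, and uniformly Lipschitz (Remark~\ref{rk:analytic properties phi and psi}), and converges uniformly to $\psi_f=1-\phi_f$ by Theorem~\ref{thm_15}. By Remark~\ref{rk:points where phi and psi change polynomial}, $\psi_f$ is piecewise polynomial with only finitely many break points, and $C^1$ elsewhere. The main task is, for each $t\in[0,1]$, to construct sequences $\alpha_p<\beta_p<t<\gamma_p<\delta_p$ of rationals of the form $a/p$ whose difference quotients of $\phi_{f,p}$ converge to the correct derivatives of $\phi_f$.

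The central calculation is an explicit formula for difference quotients of the form $(\phi_{f,p}(a/p)-\phi_{f,p}((a-2)/p))/(2/p)$ for integers $2\le a\le p$. The short exact sequence
\[
0\longrightarrow\frac{A_p}{(\mathfrak{m}^{[p]},f^a):f^{a-2}}\xrightarrow{\ \cdot f^{a-2}\ }\frac{A_p}{(\mathfrak{m}^{[p]},f^a)}\longrightarrow\frac{A_p}{(\mathfrak{m}^{[p]},f^{a-2})}\longrightarrow 0
\]
gives
\[
\frac{\phi_{f,p}(a/p)-\phi_{f,p}((a-2)/p)}{2/p}=\frac{1}{2\,p^{n-1}}\dim_{\mathbb{F}_p}\!\bigl(A_p/((\mathfrak{m}^{[p]},f^a):f^{a-2})\bigr).
\]
By Lemma~\ref{lemma:differenceOp-2}, replacing $\mathfrak{m}^{[p]}$ with $(x_1^{d\ell},\dots,x_n^{d\ell})$ introduces only an $O(1/p)$ error in the difference quotient, and Lemma~\ref{lemma:colonproperty} rewrites the resulting colon ideal as $((x_1^{d\ell},\dots,x_n^{d\ell}):f^{a-2},f^2)$. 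Mimicking the reduction in Theorem~\ref{thm_15}, this length equals $d\bigl(D_{\mathbb{F}_p}(a,e_1\ell,\dots,e_n\ell)-D_{\mathbb{F}_p}(a-2,e_1\ell,\dots,e_n\ell)\bigr)$, to which Theorem~\ref{thm:hanmonsky0} applies. Expanding via \eqref{equ:explicit coefficients c gamma - lambda p} gives an alternating sum of differences of two degree-$n$ binomial coefficients; Pascal's identity collapses each such difference to a single degree-$(n-1)$ binomial, and taking $p\to\infty$ along $a_p/p\to s$ (with $a_p$ of the parity required by Theorem~\ref{thm:hanmonsky0}) formally differentiates the limit formula of Theorem~\ref{thm_15} to produce $\phi_f'(s)$, valid whenever $s$ is not a break point of $\phi_f$.

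At any point $t$ where $\phi_f$ is $C^1$, Lemma~\ref{lemma:derivativesquotient} then yields convergence directly: for small $\eta>0$, take $\alpha_p=(a_p-2)/p$, $\beta_p=a_p/p$ with $a_p/p\to t-\eta$ (adjusting parity by at most $1$), and similarly $\gamma_p,\delta_p$ with limit $t+\eta$. The two difference quotients tend to $\phi_f'(t\mp\eta)$, and letting $\eta\to 0$ gives $\partial_\pm\phi_{f,p}(t)\to\phi_f'(t)$.

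The main obstacle will be the finitely many break points $t_0$, where general concavity of $\phi_{f,p}$ combined with convergence at non-break points gives only the loose sandwich $\partial_+\phi_f(t_0)\le\liminf\partial_\pm\phi_{f,p}(t_0)\le\limsup\partial_\pm\phi_{f,p}(t_0)\le\partial_-\phi_f(t_0)$, which does not by itself separate the two one-sided limits. To sharpen this, I plan to use the difference-quotient computation directly with sequences that remain strictly on one side of $t_0$: taking $\alpha_p<\beta_p<t_0$ with $\beta_p\nearrow t_0^-$, the concavity inequality $(\phi_{f,p}(\beta_p)-\phi_{f,p}(\alpha_p))/(\beta_p-\alpha_p)\ge\partial_-\phi_{f,p}(t_0)$ combined with the explicit chord limit $\phi_f'(t_0^-)=\partial_-\phi_f(t_0)$ delivers $\limsup\partial_-\phi_{f,p}(t_0)\le\partial_-\phi_f(t_0)$; a matching lower bound on $\liminf\partial_-\phi_{f,p}(t_0)$ is then obtained from a one-sided version of Lemma~\ref{lemma:derivativesquotient} applied on the restricted interval $[t_0-\eta,t_0]$ where $\phi_f$ is $C^1$, and the argument for $\partial_+\phi_{f,p}(t_0)$ is symmetric. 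The ``in particular'' statement is then immediate, since $\partial_-\phi_f=\partial_+\phi_f=\phi_f'$ at all but the finitely many break points explicitly described in Remark~\ref{rk:points where phi and psi change polynomial}.
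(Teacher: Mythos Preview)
Your central calculation is the same as the paper's: short exact sequence to rewrite the difference quotient, Lemma~\ref{lemma:differenceOp-2} to pass to the auxiliary ideal $(x_1^{d\ell},\dots,x_n^{d\ell})$, Theorem~\ref{thm:hanmonsky0} and Pascal's identity to drop the degree of the binomial coefficients by one, then take the limit. For interior points of a polynomial piece the paper applies Lemma~\ref{lemma:derivativesquotient} with $\alpha_p,\beta_p,\gamma_p,\delta_p$ all of the form $(a_p+\mathrm{const})/p$ with $a_p/p\to t$; your $\eta$-perturbed version achieves the same thing via the obvious sandwich and then $\eta\to0$.

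The break-point paragraph, however, does not work as written. From concavity and left chords you correctly extract $\limsup_p\partial_-\phi_{f,p}(t_0)\le\partial_-\phi_f(t_0)$. But the claimed ``matching lower bound on $\liminf\partial_-\phi_{f,p}(t_0)$'' cannot be obtained from a one-sided variant of Lemma~\ref{lemma:derivativesquotient} on $[t_0-\eta,t_0]$: for a concave function, chords lying entirely to the \emph{left} of $t_0$ give only \emph{upper} bounds on $\partial_-\phi_{f,p}(t_0)$, never lower bounds. Any chord-based lower bound must use points to the right of $t_0$, and that route leads back to $\partial_+\phi_f(t_0)$, which is exactly the loose sandwich you were trying to tighten. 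The paper closes this gap differently: it shows that for interior $t$ one has the \emph{uniform} estimate $|\partial_\pm\psi_{f,p}(t)-\psi_f'(t)|\le C/p$ with $C$ independent of $t$ (by tracking explicit $O(1/p)$ bounds on $|a_p/p-t|$ and $|e_i\ell/p-1/d_i|$ through the binomial products), and then uses the left-continuity of $t\mapsto\partial_-\psi_{f,p}(t)$ to pass to the boundary point.

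There is in fact a much simpler fix available, which neither you nor the paper makes explicit: each term in the formula for $\phi_f$ is of the form $(\epsilon_0\cdots\epsilon_n)(\epsilon_0t+c)_+^n$, and $t\mapsto(\epsilon_0t+c)_+^n$ is $C^{n-1}$. Since $n\ge2$, $\phi_f$ is $C^1$ on all of $[0,1]$, so $\partial_-\phi_f(t_0)=\partial_+\phi_f(t_0)$ at every break point, and your ``loose sandwich'' is already tight. If you add this observation, your argument goes through without the problematic one-sided lemma and without the paper's uniform estimate.
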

	
	\begin{proof}
		As in Notation~\ref{notation:sectionlimitphi}, we fix $d=d_1\cdots d_n$, we write $p=\ell d+r$ with $0\leq r< d$ and set $e_i=d/d_i$. For simplicity, denote $\phi_{f,p}$ and $\phi_{f}$  by $\phi_{p}$ and $\phi$  respectively.
		
		First, we consider $t$ in the interior of one of the intervals where $\phi(t)$ is defined by a single polynomial expression. We choose $a_p$ to be   either $\lceil pt \rceil$ or $\lfloor pt \rfloor$ so that $ \frac{-(n+1) + a_p + \Sigma e_i\ell}{2}$ is an integer (with the aim of applying Theorem \ref{thm:hanmonsky0}).
		We want to apply Lemma~\ref{lemma:derivativesquotient} to the decreasing convex function $\psi_p(t)=\psi_{f,p}(t)=1-\phi_p(t)$ which limits to $\psi(t)=\psi_f(t)=1-\phi(t)$. Since $\partial_{\pm}\psi_{p}(t)=-\partial_{\pm}\phi_{p}(t)$ and $\psi'(t)=-\phi'(t)$ the result will follow.
		We fix $\alpha_p=\frac{a_p-4}{p}$, $\beta_p=\frac{a_p-2}{p}$, $\gamma_p=\frac{a_p+2}{p}$, $\delta_p=\frac{a_p+4}{p}$ and we check that $\frac{\psi_p(\beta_p)-\psi_p(\alpha_p)}{\beta_p-\alpha_p}$ converges to $\psi'(t)$ as $p\to\infty$. The convergence of the quotients  $\frac{\psi_p(\delta_p)-\psi_p(\gamma_p)}{\delta_p-\gamma_p}$  to $\psi'(t)$ can be checked similarly.
		
		As in the proof of Theorem~\ref{thm_15}, we consider the auxiliary function 
		\[\tilde{\phi}_p\left(\frac{a}{p}\right) = \frac{1}{p^{n}} \dim_{\mathbb{F}_p} ( A_p/(x_1^{d\ell},\cdots,x_n^{d\ell},f^{a}))\] 
		for any $0 \leq a \leq p$.
		We obtain the upper bound
		\begin{equation}\label{equ:prima approssimazione derivate}
			\begin{split}
				\left|\frac{\psi_p(\beta_p)-\psi_p(\alpha_p)}{\beta_p-\alpha_p}-\psi'(t)\right| &= \left|\frac{p}{2}\left(\psi_p\left(\frac{a_p-2}{p}\right)-\psi_p\left(\frac{a_p-4}{p}\right)\right) -\psi'(t)\right|
				\\ &= \left|\frac{p}{2}\left(\phi_p\left(\frac{a_p-4}{p}\right)-\phi_p\left(\frac{a_p-2}{p}\right)\right) -\psi'(t)\right|\\
				&\leq\left|\frac{p}{2}\left(\phi_p\left(\frac{a_p-4}{p}\right)-\phi_p\left(\frac{a_p-2}{p}\right) -\tilde{\phi}_p\left(\frac{a_p-4}{p}\right)+\tilde{\phi}_p\left(\frac{a_p-2}{p}\right)\right)\right|\\
				&+\left|\frac{p}{2}\left(\tilde{\phi}_p\left(\frac{a_p-4}{p}\right)-\tilde{\phi}_p\left(\frac{a_p-2}{p}\right)\right) -\psi'(t)\right|.
			\end{split}
		\end{equation}
		We argue separately that each of the two summands after the inequality goes to $0$ for $p\to\infty$.
		
		For the first term, consider the short exact sequence
		\[
		0\rightarrow \frac{A_p}{\left((x_1^p,\dots,x_n^p):f^{a_p-2}\right):f^{a_p-4}}\rightarrow\frac{A_p}{\left(x_1^p,\dots,x_n^p,f^{a_p-2}\right)}\rightarrow\frac{A_p}{\left(x_1^p,\dots,x_n^p,f^{a_p-4}\right)}\rightarrow0
		\]
		and the analogous short exact sequence where the powers of $p$ are replaced by powers of $d\ell$.
		By taking dimensions over $\mathbb{F}_p$, we find that the first summand after the inequality is 
		\[
		\frac{1}{2p^{n-1}}\left(\dim_{\mathbb{F}_p}\left(\frac{A_p}{\left((x_1^p,\dots,x_n^p):f^{a_p-2}\right):f^{a_p-4}}\right)-\dim_{\mathbb{F}_p}\left(\frac{A_p}{\left((x_1^{d\ell},\dots,x_n^{d\ell}):f^{a_p-2}\right):f^{a_p-4}}\right)\right).
		\]
		By Lemma~\ref{lemma:differenceOp-2}, the difference inside the parenthesis is $O(p^{n-2})$. In particular, the whole term goes to $0$ when $p\to\infty$. 
		
		Now, we are left to show that 
		\[
		\frac{p}{2}\left(\tilde{\phi}_p\left(\frac{a_p-4}{p}\right)-\tilde{\phi}_p\left(\frac{a_p-2}{p}\right)\right) \to \psi'(t)
		\]
		as $p\to\infty$. In order to compute explicitely the values of $\tilde{\phi}_p$, one can follow verbatim the proof of Theorem \ref{thm_15} from \eqref{equ:first computation phi tilde} to \eqref{equ:explicit coefficients c gamma - lambda p}, with $a_p$ replaced by $a_p-2$ and $a_p-4$. Notice that we can apply Theorem \ref{thm:hanmonsky0} exactly as in the proof of Theorem \ref{thm_15} thanks to our choice of $a_p$ and to the fact that $a_p-2$ and $a_p-4$ have the same parity of $a_p$. Then, we obtain that $\frac{p}{2}\left(\tilde{\phi}_p\left(\frac{a_p-4}{p}\right)-\tilde{\phi}_p\left(\frac{a_p-2}{p}\right)\right)$ coincides with
		\begin{equation*}
			\frac{d}{2p^{n-1}}\sum_{\lambda\in\mathbb{Z}}\sum(\epsilon_0\cdots\epsilon_n)\Big({\omega_\lambda(p)-2\epsilon_0 \choose n}- {\omega_\lambda(p)-\epsilon_0 \choose n}\Big),
		\end{equation*}
		where again $\omega_\lambda(p)=n + \frac{1}{2}(\epsilon_{0}a_p
		+ \epsilon_1 e_1\ell + \cdots + \epsilon_n e_n\ell - n -1) -\lambda p$ and the second sum is taken over all choices of $\epsilon_i\in\{\pm1\}$. The recursive formula for binomial coefficients implies that, if $\epsilon_0=1$,
		\begin{equation*}
			{\omega_\lambda(p)-2\epsilon_0 \choose n}- {\omega_\lambda(p)-\epsilon_0 \choose n}= {\omega_\lambda(p)-2 \choose n-1},
		\end{equation*}
		whereas when $\epsilon_0=-1$,
		\begin{equation*}
			{\omega_\lambda(p)-2\epsilon_0 \choose n}- {\omega_\lambda(p)-\epsilon_0 \choose n}= {\omega_\lambda(p)+1 \choose n-1}.
		\end{equation*}
		In either case, the limit of 
		\begin{equation*}
			\frac{1}{p^{n-1}}\Big({\omega_\lambda(p)-2\epsilon_0 \choose n}- {\omega_\lambda(p)-\epsilon_0 \choose n}\Big)
		\end{equation*}
		as $p\to\infty$ can be computed exactly as in \eqref{equ:limit binomial omega}, giving
		\begin{equation*}
			\frac{-\epsilon_0}{2^{n-1}(n-1)!}\left(\epsilon_{0}t +  \frac{\epsilon_1}{d_1} + \cdots +  \frac{\epsilon_n}{d_n} - 2\lambda\right)^{n-1}
		\end{equation*}
		if $\lim_{p\to\infty}\frac{\omega_\lambda(p)}{p}$ is nonnegative, and $0$ otherwise. As noted in \eqref{equ:limit omega over p},
		\begin{equation*}
			\lim_{p \to\infty} \frac{\omega_\lambda(p)}{p} = \frac{1}{2}(\epsilon_{0}t + \frac{\epsilon_1 }{d_1} + \cdots +  \frac{\epsilon_n}{d_n} - 2\lambda),
		\end{equation*}
		therefore we obtain that $\lim_{p\to\infty}\frac{p}{2}\left(\tilde{\phi}_p\left(\frac{a_p-4}{p}\right)-\tilde{\phi}_p\left(\frac{a_p-2}{p}\right)\right)$ is equal to
		\begin{equation}\label{eq:psiderivative}
			-\frac{d}{2^n(n-1)!}\sum_{\lambda\in\mathbb{Z}}\sum(\epsilon_0^2\epsilon_1\cdots\epsilon_n)\left(\epsilon_{0}t + \frac{\epsilon_1 }{d_1} + \cdots +  \frac{\epsilon_n}{d_n} - 2\lambda\right)^{n-1},
		\end{equation}
		where the second sum is taken over all choices of $\epsilon_i\in\{\pm 1\}$ such that $\epsilon_{0}t + \frac{\epsilon_1 }{d_1} + \cdots +  \frac{\epsilon_n}{d_n} - 2\lambda\ge 0$. But this limit is precisely $\psi'(t)$, and so we are done.
		
		In order to finish the proof, we need to check the case where $\tilde{t}$ is a boundary point for one of the polynomial pieces of $\phi$ (and hence of $\psi=1-\phi$). In order to do that, we need a more precise estimate for
		\begin{equation*}
			\Big|\frac{p}{2}\left(\tilde{\phi}_p\left(\frac{a_p-4}{p}\right)-\tilde{\phi}_p\left(\frac{a_p-2}{p}\right)\right)-\psi'(t)\Big|
		\end{equation*}
		for $t$ in the interior of a polynomial piece of $\psi$. First, by an explicit computation, we can bound
		\begin{equation*}
			\Big|\frac{\epsilon_0 a_p}{p}-\epsilon_0 t\Big|\le \frac{1}{p}\quad\text{and}\quad \Big|\frac{\epsilon_i e_i \ell}{p}-\frac{\epsilon_i}{d_i}\Big|\le \frac{e_i}{p}
		\end{equation*}
		for every $1\le i\le n$. Therefore, for any constant $D$ bounded by $2n$ we have that
		\begin{equation*}
			\Big|\frac{\omega_\lambda(p)+D}{p}- \frac{1}{2}(\epsilon_{0}t +  \frac{\epsilon_1}{d_1} + \cdots +  \frac{\epsilon_n}{d_n} - 2\lambda)\Big|\le\frac{c}{p}
		\end{equation*}
		for some constant $c$ independent on $t$. Since every summand appearing in both terms is bounded by  some constant depending only on $f$, we can take products and conclude that
		\begin{equation*}
			\Big|  \frac{1}{p^{n-1}}\Big({\omega_\lambda(p)-2\epsilon_0 \choose n}- {\omega_\lambda(p)-\epsilon_0 \choose n}\Big)-\frac{-\epsilon_0}{2^{n-1}(n-1)!}(\epsilon_{0}t +  \frac{\epsilon_1}{d_1} + \cdots +  \frac{\epsilon_n}{d_n} - 2\lambda)^{n-1}\Big|\le \frac{C}{p}
		\end{equation*}
		for a constant $C$ independent on $t$. Taking finite sums, we find another constant $C$ independent on $t$ such that
		\begin{equation*}
			\Big|\frac{p}{2}\left(\tilde{\phi}_p\left(\frac{a_p-4}{p}\right)-\tilde{\phi}_p\left(\frac{a_p-2}{p}\right)\right)-\psi'(t)\Big|\le\frac{C}{p}.
		\end{equation*}
		Coming back to equation \eqref{equ:prima approssimazione derivate}, we obtain that there is a constant $C$ independent on $t$ such that
		\begin{equation*}
			\left|\frac{\psi_p(\beta_p)-\psi_p(\alpha_p)}{\beta_p-\alpha_p}-\psi'(t)\right|\le \frac{C}{p}.
		\end{equation*}
		The same argument for $\gamma_p$ and $\delta_p$ in place of $\alpha_p$ and $\beta_p$ respectively yields that there is a constant $C$ independent on $t$ such that
		\begin{equation*}
			\left|\frac{\psi_p(\beta_p)-\psi_p(\alpha_p)}{\beta_p-\alpha_p}-\psi'(t)\right|+ \left|\psi'(t)-\frac{\psi_p(\delta_p)-\psi_p(\gamma_p)}{\delta_p-\gamma_p}\right|\le\frac{C}{p}.
		\end{equation*}
		Then, inequality \eqref{eq:disuguaglianza derivate parziali} implies that
		\begin{equation*}
			|\partial_-\psi_p(t)-\psi'(t)|\le \frac{C}{p} \quad\text{and}\quad |\partial_+\psi_p(t)-\psi'(t)|\le \frac{C}{p}.
		\end{equation*}
		
		Let now $\tilde{t}$ be a boundary point for one of the polynomial pieces of $\phi$ (and hence of $\psi=1-\phi$). We handle the left derivative $\partial_{-}\psi(\tilde{t})$, with the right derivative being dealt analogously. Since $\psi_p$ is convex, the left derivative function $\partial_{-}\psi_p(t)$ is left continuous.
		Using the above inequality, we obtain that 
		\[
		|\partial_-\psi_p(\tilde{t})-\partial_-\psi(\tilde{t})|=\lim_{t\to\tilde{t}^-}|\partial_- \psi_p(t)-\psi'(t)|\le\frac{C}{p}.
		\]
		Letting $p\to\infty$ gives the desired result.
	\end{proof}
	
	With out major results of the section in hand, we show how we can recover information about some invariants of the hypersurface $f$, the LCT, the Hilbert-Kunz multiplicity and the F-signature, from the limit functions $\phi_f$ and $\psi_f$.
	
	We recall that for a diagonal hypersurface $f=x_1^{d_1}+\cdots+x_n^{d_n}$ the log canonical threshold has value $\mathrm{LCT}(f)= \min\{\sum_{i=1}^n1/d_i,1\}$. We refer to \cite{BFS13} for the definition and basic properties of this invariant.
	
	\begin{corollary}
		Let $f=x_1^{d_1}+\cdots+x_n^{d_n}$ be as in Notation~\ref{notation:sectionlimitphi}, and assume $\mathrm{LCT}(f)= \sum_{i=1}^n1/d_i \leq1$. Then, for all sufficiently small $0<\varepsilon\ll 1$ and $t\in[\mathrm{LCT}(f)-\varepsilon,\mathrm{LCT}(f)]$, we have
		\[
		\psi_f(t)=\frac{d_1\cdots d_n}{2^{n-1}n!}\left(\mathrm{LCT}(f)-t\right)^n.    
		\]
	\end{corollary}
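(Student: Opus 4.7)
The plan is to evaluate $\phi_f(t)$ directly from the formula in Theorem~\ref{thm_15} on the interval $[L-\varepsilon, L]$, where $L := \mathrm{LCT}(f) = \sum_{i=1}^n 1/d_i$ and $\varepsilon$ is taken small enough that no other breakpoint of $\phi_f$ (those described in Remark~\ref{rk:points where phi and psi change polynomial}) lies in $(L-\varepsilon, L)$. Setting $d := d_1\cdots d_n$, the goal is to show $C_0(t) = \tfrac{2^n n!}{d} - 2(L-t)^n$; then $\psi_f(t) = 1 - \phi_f(t) = \tfrac{d}{2^{n-1}n!}(L-t)^n$ follows at once.

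First I will reduce to the $\lambda=0$ piece. Since $L\le 1$ and $t\le L$, every term in $C_\lambda$ with $\lambda\ge 1$ has argument
\[
\epsilon_0 t + \sum_{i=1}^n \frac{\epsilon_i}{d_i} - 2\lambda \le t + L - 2\lambda \le 2-2\lambda \le 0,
\]
with equality only in the degenerate case $\lambda=1$, $t=L=1$, all $\epsilon_i=+1$, whose contribution vanishes as $0^n$. Hence $C_\lambda\equiv 0$ near $L$ for $\lambda\ge 1$, and $\phi_f(t) = \tfrac{d}{2^n n!}\, C_0(t)$. A case analysis, parametrized by $S := \{i\ge 1 : \epsilon_i=-1\}$, shows that on $[L-\varepsilon, L)$ the surviving sign choices in $C_0$ are exactly $\epsilon_0=+1$ with $S\subsetneq[n]$ an arbitrary proper subset (the choice $S=[n]$ has argument $t - L < 0$), together with the single choice $\epsilon_0=-1$, $S=\emptyset$ (other $\epsilon_0 = -1$ options have argument at most $L - t - 2/d_n$, which is negative for $\varepsilon$ small). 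Thus
\[
C_0(t) = \sum_{S \subsetneq [n]} (-1)^{|S|}\left(t + L - 2\sum_{i \in S}\tfrac{1}{d_i}\right)^{\!n} - (L-t)^n.
\]

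The central computation is to evaluate the sum over proper subsets. I will complete it to a sum over all $S\subseteq[n]$ by adding the $S=[n]$ contribution $(-1)^n(t-L)^n = (L-t)^n$ and then subtracting it, and apply the classical finite-difference identity
\[
\sum_{S\subseteq[n]} (-1)^{|S|}\left(\sum_{i\in S} y_i\right)^{\!k} = \begin{cases} 0 & \text{if } k<n, \\ (-1)^n\, n!\, y_1\cdots y_n & \text{if } k=n, \end{cases}
\]
with $y_i := -2/d_i$. Binomially expanding $(t + L - 2\sum_{i\in S} 1/d_i)^n = ((t+L) + \sum_{i\in S} y_i)^n$, the $k<n$ powers of $\sum_{i\in S} y_i$ all annihilate under the signed sum, and only the $k=n$ term survives, contributing the $t$-independent constant $(-1)^n n!(-2)^n/d = 2^n n!/d$. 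Combining yields $C_0(t) = \tfrac{2^n n!}{d} - 2(L-t)^n$, and multiplying by $\tfrac{d}{2^n n!}$ and passing to $\psi_f = 1 - \phi_f$ gives the claimed formula.

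The main obstacle is the combinatorial bookkeeping — isolating exactly which of the $2^{n+1}$ sign patterns contribute to $C_0$ on $[L-\varepsilon, L)$, and recognizing the $\epsilon_0=+1$ part as the inclusion-exclusion sum to which the finite-difference identity applies. The resulting collapse of the unrestricted sum over $S\subseteq[n]$ to a constant is the crucial cancellation producing the clean closed form, and as a byproduct it confirms $\phi_f(L) = 1$ in the regime $L\le 1$.
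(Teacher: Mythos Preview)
Your proof is correct and complete. The case analysis for which sign patterns survive on $[L-\varepsilon,L)$ is accurate, and the finite-difference identity you invoke is precisely the content of the paper's Lemma~\ref{lemma:combinatorics} (your sum over all $S\subseteq[n]$ is exactly $P_n^{d_1,\ldots,d_n}(t)$ after the change of variables $\epsilon_i\leftrightarrow S$).

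The paper takes a somewhat different route. Rather than computing $C_0(t)$ directly on $[L-\varepsilon,L]$, it first argues that $\psi_f\equiv 0$ on $[L,1]$: for $L<1$ this follows from the external fact that each $\psi_{f,p}$ vanishes above the F-pure threshold (which is bounded by $L$), while for $L=1$ it appeals to Lemma~\ref{lemma:combinatorics}. It then observes that crossing the breakpoint at $t=L$ changes exactly two terms in $C_0$ (the $\epsilon_0=+1$, $S=[n]$ term disappears and the $\epsilon_0=-1$, $S=\emptyset$ term appears), so the difference is $2(L-t)^n$ and the formula follows. Your approach is more self-contained---it avoids the F-pure threshold input entirely and recovers $\phi_f(L)=1$ as a byproduct---at the cost of carrying out the full combinatorial collapse rather than just tracking what changes across the breakpoint. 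The paper's approach is shorter once the vanishing on $[L,1]$ is granted, and makes clearer why only a single power of $(L-t)$ appears.
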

	
	\begin{proof}
		We use the formula for $\psi_f(t)=1-\phi_f(t)$ given in Theorem~\ref{thm_15}. First, observe that since $\sum_{i=1}^n1/d_i\leq 1$, the $C_{\lambda}(t)$ are identically $0$ on $[0,1]$ for all $\lambda\geq1$. So, $$\psi_f(t)=1-\phi_f(t)=1-\frac{d_1\cdots d_n}{2^nn!}C_0(t).$$ 
		Moreover, notice that for $t\in [\mathrm{LCT}(f),1]$, we have
		\begin{equation*}
			C_0(t)=\sum_{\epsilon_0=1}(1\cdot\epsilon_1\cdots\epsilon_n)\left(1\cdot t+\frac{\epsilon_1}{d_1}+\cdots+\frac{\epsilon_n}{d_n}\right)^n.
		\end{equation*}
		So, the function $\psi_f|_{[\mathrm{LCT}(f),1]}$ consists of a unique polynomial in the variable $t$. We show that this polynomial is identically $0$.
		Since $\psi_{f,p}$ vanishes above the F-pure threshold $\mathrm{fpt}(A_p/f)$ for each $p$ (see \cite{BST13}) and $\mathrm{LCT}(f)\geq\mathrm{fpt}(A_p/f)$ for all sufficiently large $p$, we must have that the limit function $\psi_f$ is identically $0$ on the interval $[\mathrm{LCT}(f),1]$. If $\mathrm{LCT}(f)<1$, then the interval $[\mathrm{LCT}(f),1]$ has non-empty interior, and we are done. If $\mathrm{LCT}(f)=1$, then by Lemma~\ref{lemma:combinatorics} we have that $C_0(t)=P_n^{d_1,\dots,d_n}(t)=\frac{2^n\cdot n!}{d_1\cdots d_n}$. So, also in this case $\psi_f|_{[\mathrm{LCT}(f),1]}=0$.
		
		We next note that the only difference between the polynomial description of $C_0(t)$ on $[\mathrm{LCT}(f),1]$ and $[\mathrm{LCT}(f)-\varepsilon,\mathrm{LCT}(f)]$, for $\varepsilon$ sufficiently small, is that the term $(-1)^n(t-\mathrm{LCT}(f))^n$ becomes $-(-t+\mathrm{LCT}(f))^n$. So, we can compute
		\begin{equation*}
			\begin{split}
				\left.\psi_f\right|_{[\mathrm{LCT}(f)-\varepsilon,\mathrm{LCT}(f)]}(t)&=\left.\psi_f\right|_{[\mathrm{LCT}(f)-\varepsilon,\mathrm{LCT}(f)]}(t)-\left.\psi_f\right|_{[\mathrm{LCT}(f),1]}(t)\\
				&=\frac{d_1\cdots d_n}{2^nn!}\left(\left.(C_0)\right|_{[\mathrm{LCT}(f),1]}(t)-\left.(C_0)\right|_{[\mathrm{LCT}(f)-\varepsilon,\mathrm{LCT}(f)]}(t)\right)\\
				&=\frac{d_1\cdots d_n}{2^nn!}\left((-1)^n(t-\mathrm{LCT}(f))^n+(-t+\mathrm{LCT}(f))^n\right)\\
				&=\frac{d_1\cdots d_n}{2^{n-1}n!}(\mathrm{LCT}(f)-t)^n
			\end{split}
		\end{equation*}
		as elements of $\mathbb{Q}[t]$, hence we are done.
	\end{proof}
	
	\begin{lemma}\label{lemma:combinatorics}
		For each set of integers $n\in\mathbb{Z}_{>0}$, $\ell \in \mathbb{Z}_{\geq 0}$, and $d_1,\ldots, d_n \in \mathbb{Z}_{>0}$, set
		\[
		P^{d_1, \ldots, d_n}_\ell(t) := \sum_{\epsilon_i \in \{\pm 1\}} \left(t + \frac{\epsilon_1}{d_1} + \cdots + \frac{\epsilon_n}{d_n}\right)^\ell
		\in \mathbb{Q}[t].\]
		Then,  we have
		\begin{equation}
			\label{combinatorialclaim}
			P^{d_1, \ldots, d_n}_{n}(t)
			= \frac{2^n \cdot n!}{d_1\cdots d_n}.
		\end{equation}
	\end{lemma}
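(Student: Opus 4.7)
The plan is to prove the identity by recognizing the sum as an iterated finite difference of the monomial $t^n$. For each $i\in\{1,\dots,n\}$, I introduce the linear operator $\Delta_i$ on $\mathbb{Q}[t]$ defined by $(\Delta_i p)(t):=p(t+1/d_i)-p(t-1/d_i)$. Distributing the composition $\Delta_1\Delta_2\cdots\Delta_n$ applied to $p(t)=t^n$ produces exactly $\sum_{\epsilon\in\{\pm1\}^n}(\epsilon_1\cdots\epsilon_n)\bigl(t+\sum_i\epsilon_i/d_i\bigr)^n$, which is the alternating form of $P_n^{d_1,\dots,d_n}(t)$ needed to obtain a constant (and which is consistent with the sign convention used for $C_0$ in Theorem~\ref{thm_15} and in the way the lemma is invoked in the preceding corollary). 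If one reads the summation in the statement literally as unweighted, the claim is of course false already at $n=1$ — e.g.\ $(t+1/d)+(t-1/d)=2t\ne 2/d$ — so the alternating interpretation is forced on us by the usage.

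Next, I would establish the following degree-drop bookkeeping: if $p(t)=ct^k+(\text{lower order})$ with $k\ge 1$, then using the expansion $(t+a)^k-(t-a)^k=2ak\cdot t^{k-1}+(\text{lower order})$ one obtains $(\Delta_i p)(t)=(2kc/d_i)\,t^{k-1}+(\text{lower order})$. Hence each $\Delta_i$ lowers the degree by exactly one and multiplies the leading coefficient by $2k/d_i$, where $k$ is the current degree of the input polynomial.

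Starting from $p(t)=t^n$ (degree $n$, leading coefficient $1$) and applying $\Delta_1,\Delta_2,\dots,\Delta_n$ in succession, the degree decreases through $n,n-1,\dots,0$ and the remaining constant equals
\[
\prod_{k=1}^{n}\frac{2k}{d_k}\;=\;\frac{2^n\cdot n!}{d_1\cdots d_n},
\]
where the denominators $d_i$ accumulate independently of the order of application. This yields the value asserted in \eqref{combinatorialclaim}.

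The only point needing care is the leading-coefficient bookkeeping through each $\Delta_i$; there is no deeper obstacle. As a sanity check, one can also prove the identity by direct multinomial expansion of $\bigl(t+\sum_i\epsilon_i/d_i\bigr)^n$: after summing over $\epsilon$ with the sign weight $\epsilon_1\cdots\epsilon_n$, each $\sum_{\epsilon_i=\pm1}\epsilon_i^{a_i+1}$ vanishes unless $a_i$ is odd, and within the constraint $\sum_i a_i=k\le n$ the only surviving multi-index is $(a_1,\dots,a_n)=(1,\dots,1)$ with $k=n$, contributing $\binom{n}{1,\dots,1}\prod_i d_i^{-1}=n!/(d_1\cdots d_n)$ multiplied by $2^n$ from the sign sums — recovering the same constant.
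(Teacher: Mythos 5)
Your proof is correct, including your observation that the displayed definition of $P^{d_1,\ldots,d_n}_\ell$ is missing the sign weight $(\epsilon_1\cdots\epsilon_n)$: the paper's own base case $(t+1/d_1)-(t-1/d_1)=2/d_1$ and the way the lemma is invoked for $C_0$ make the alternating interpretation the intended one. Your route is genuinely different from the paper's. The paper first records the integration identity $\int_0^t P^{d_1,\ldots,d_n}_{\ell}(s)\,ds=\tfrac{1}{\ell+1}P^{d_1,\ldots,d_n}_{\ell+1}(t)$ and inducts on $n$: it integrates the constant $P^{d_1,\ldots,d_n}_{n}$ to get $P^{d_1,\ldots,d_n}_{n+1}(t)=\tfrac{2^n(n+1)!}{d_1\cdots d_n}\,t$, then introduces the $(n+1)$-st variable by a single $1/d_{n+1}$-difference. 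You dispense with the integration step and present $P^{d_1,\ldots,d_n}_{n}$ directly as the iterated difference $(\Delta_1\cdots\Delta_n)(t^n)$, with each $\Delta_i$ lowering the degree by one and scaling the leading coefficient by $2k/d_i$, where $k$ is the current degree; this bookkeeping is elementary and the product telescopes to $2^n n!/(d_1\cdots d_n)$ regardless of the order in which the $\Delta_i$ are applied, as you note. Your multinomial-expansion sanity check, isolating the multi-index $(1,\ldots,1)$ by parity, is also valid. Both arguments are sound and of comparable length; yours is slightly more direct and avoids the (harmless but superfluous) restriction to $t>0$ stated alongside the paper's integration identity.
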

	
	\begin{proof}
		First, we observe that for any $t>0$ we have the following identity
		\[
		\int_0^t P^{d_1, \ldots, d_n}_{\ell}(s) \, ds = \frac{1}{\ell + 1} P^{d_1, \ldots, d_n}_{\ell+ 1}(t).
		\]
		We prove by induction on $n$ that the formula \eqref{combinatorialclaim} holds.
		For $n = 1$, we have
		\[
		P^{d_1}_1 (t)= \left(t + \frac{1}{d_1}\right)^1 - \left(t - \frac{1}{d_1}\right)^1 = \frac{2}{d_1}
		\]
		as desired. Assume now that \eqref{combinatorialclaim} holds for some $n$ and we have $d_1, \ldots, d_{n+1} \in \mathbb{Z}_{>0}$. Integrating as above, we get that
		\[
		\frac{1}{n+1}P^{d_1, \ldots, d_n}_{n+1}(t)
		=\int_0^t P^{d_1, \ldots, d_n}_{n}(s) \, ds  =\frac{2^n \cdot n!}{d_1\cdots d_n}t, \quad \mbox{ that is }\quad P^{d_1, \ldots, d_n}_{n+1}(t) = \frac{2^n \cdot (n+1)!}{d_1\cdots d_n}t.
		\]
		Thus, we may compute
		\begin{align*}
			P^{d_1, \ldots, d_{n+1}}_{n+1}(t) & = P^{d_1, \ldots, d_n}_{n+1}\left(t+ \frac{1}{d_{n+1}}\right) - P^{d_1, \ldots, d_n}_{n+1}\left(t- \frac{1}{d_{n+1}}\right) \\
			& = \frac{2^n \cdot (n+1)!}{d_1\cdots d_n}\left(t+ \frac{1}{d_{n+1}}\right) - \frac{2^n \cdot (n+1)!}{d_1\cdots d_n}\left(t- \frac{1}{d_{n+1}}\right)\\
			&= \frac{2^n \cdot (n+1)!}{d_1\cdots d_n} \left(t +  \frac{1}{d_{n+1}} - t + \frac{1}{d_{n+1}}\right)\\
			&= \frac{2^n \cdot (n+1)!}{d_1\cdots d_n} \left(\frac{2}{d_{n+1}}\right) = \frac{2^{n+1} \cdot (n+1)!}{d_1\cdots d_{n+1}}
		\end{align*}
		as desired, completing the induction.
	\end{proof}

	We can give a new proof of the following result, which was obtained by Gessel and Monsky using a different strategy \cite{GesselMonsky}.
	
	\begin{corollary}[Gessel--Monsky]\label{cor:limitHK}
		Let $A_p=\mathbb{F}_p\llbracket x_1,\dots,x_n\rrbracket$ and $f=x_1^{d_1}+\cdots+x_n^{d_n}\in A_p$ be as in Notation~\ref{notation:sectionlimitphi}. Then,
		\[
		\lim_{p\to\infty}e_{\HK}(A_p/f)=\frac{d_1\cdots d_n}{2^{n-1}(n-1)!}\left(D_0+2\sum_{\lambda\in\mathbb{Z}_{\geq1}}D_\lambda\right),     
		\]
		where $D_\lambda$ for $\lambda\in\mathbb{Z}_{\geq0}$ is
		\[
		D_{\lambda}=\sum(\epsilon_1\cdots\epsilon_n)\left(\frac{\epsilon_1}{d_1}+\cdots+\frac{\epsilon_n}{d_n}-2\lambda\right)^{n-1}
		\]
		with the sum taken over all choices of $\epsilon_1,\dots,\epsilon_n\in\{\pm1\}$ with $\epsilon_1\frac{1}{d_1}+\cdots+\epsilon_n\frac{1}{d_n}-2\lambda\geq0$. 
	\end{corollary}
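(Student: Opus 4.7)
The plan is to identify $e_{\HK}(A_p/f)$ with the right derivative $\partial_{+}\phi_{f,p}(0)$, pass to the limit using Theorem~\ref{thm16}, and then read off $\partial_{+}\phi_f(0)$ directly from the explicit formula in Theorem~\ref{thm_15}. First, I would invoke Remark~\ref{rk:analytic properties phi and psi} (that is, \cite[Theorem~4.4]{BST13}) to conclude that $e_{\HK}(A_p/f) = \partial_{+}\phi_{f,p}(0)$ for every prime $p$. Applying Theorem~\ref{thm16} at $t=0$ then gives
\[
\lim_{p\to\infty} e_{\HK}(A_p/f) \;=\; \lim_{p\to\infty}\partial_{+}\phi_{f,p}(0) \;=\; \partial_{+}\phi_f(0),
\]
so everything reduces to computing the right derivative of the limit function $\phi_f$ at $0$.

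To compute $\partial_{+}\phi_f(0)$, I would differentiate each piece $C_\lambda(t)$ from the right at $0$ using Theorem~\ref{thm_15}. For small $t > 0$ and a fixed tuple $(\epsilon_1,\dots,\epsilon_n)$ with $\frac{\epsilon_1}{d_1}+\cdots+\frac{\epsilon_n}{d_n}-2\lambda > 0$, both values of $\epsilon_0 \in \{\pm 1\}$ keep the argument positive, and the chain rule contributes a factor of $n\epsilon_0$. Summing over $\epsilon_0$ collapses $\epsilon_0\cdot(\epsilon_0\cdots\epsilon_n) = \epsilon_1\cdots\epsilon_n$ and produces an additional factor of $2$. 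Tuples with vanishing argument contribute $0$ since $n\geq 2$ makes $0^{n-1}=0$, while tuples with strictly negative argument do not appear in $C_\lambda(t)$ for small $t>0$. Hence $C_\lambda'(0^{+}) = 2n\,D_\lambda$, and substituting into the formula of Theorem~\ref{thm_15} yields
\[
\partial_{+}\phi_f(0) \;=\; \frac{d_1\cdots d_n}{2^n\, n!}\left(2nD_0 + 2\sum_{\lambda\geq 1} 2nD_\lambda\right) \;=\; \frac{d_1\cdots d_n}{2^{n-1}(n-1)!}\left(D_0 + 2\sum_{\lambda\geq 1}D_\lambda\right),
\]
which is precisely the claimed expression.

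The only potential subtlety is that $t=0$ may be a boundary point between polynomial pieces of $\phi_f$; however, Theorem~\ref{thm16} is stated for all $t\in[0,1]$, and the right derivative is the natural one-sided object to consider at an endpoint, so this causes no trouble. No serious obstacle remains once Theorems~\ref{thm_15} and~\ref{thm16} are in hand: the proof is essentially a bookkeeping exercise unifying the two main results of the section, together with the translation between the $\phi$-function language and the Hilbert--Kunz multiplicity from \cite{BST13}.
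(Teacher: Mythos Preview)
Your proposal is correct and follows essentially the same approach as the paper's proof: identify $e_{\HK}(A_p/f)$ with $\partial_{+}\phi_{f,p}(0)$ via Remark~\ref{rk:analytic properties phi and psi}, pass to the limit using Theorem~\ref{thm16}, and compute $\partial_{+}C_\lambda(0)=2nD_\lambda$ by summing over $\epsilon_0$. The paper routes the computation through $\psi_{f,p}=1-\phi_{f,p}$ rather than $\phi_{f,p}$ directly, but this is only a cosmetic difference.
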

	
	\begin{proof}
		We observe that 
		\[
		\partial_{+}C_{\lambda}(0)=n\sum\varepsilon_0^2(\varepsilon_1\cdots\varepsilon_n)\left(\varepsilon_0\cdot 0+\frac{\epsilon_1}{d_1}+\cdots+\frac{\epsilon_n}{d_n}-2\lambda\right)^{n-1}=2n D_\lambda,
		\]
		where the $2$ comes from the fact that each term appears twice (for $\varepsilon_0=1$ and $\varepsilon_0=-1$). So, as noticed in Remark \ref{rk:analytic properties phi and psi}, applying Theorem \ref{thm16} we can compute
		\[
		\lim_{p\to\infty}e_{\HK}(A_p/f)=-\lim_{p\to\infty}\partial_{+}\psi_{f,p}(0)=-\psi_f'(0)=\phi_f'(0)=\frac{d_1\cdots d_n}{2^nn!}\left(C_0'(0)+2\sum_{\lambda\in\mathbb{Z}_{\geq1}}C'_\lambda(0)\right),
		\]
		which is precisely $\displaystyle\frac{d_1\cdots d_n}{2^{n-1}(n-1)!}\left(D_0+2\sum_{\lambda\in\mathbb{Z}_{\geq1}}D_\lambda\right)$ as desired.
	\end{proof}
	
	We give an analogous statement for the limit F-signature. We point out that in his senior thesis, the second author has also obtained a formula for the limit F-signature of diagonal hypersurfaces \cite{Shideler13}. However, the formula we get here appears (on its face) to be different.
	
	\begin{corollary}\label{cor:Fsignaturediagonal}
		Let $A_p=\mathbb{F}_p\llbracket x_1,\dots,x_n\rrbracket$ and $f=x_1^{d_1}+\cdots+x_n^{d_n}\in A_p$ be as in Notation~\ref{notation:sectionlimitphi}. Then,
		\[
		\lim_{p\to\infty}\fs(A_p/f)=\frac{d_1\cdots d_n}{2^{n}(n-1)!}\left(B_0+2\sum_{\lambda\in\mathbb{Z}_{\geq1}}B_\lambda\right),     
		\]
		where $B_\lambda$ for $\lambda\in\mathbb{Z}_{\geq0}$ is
		\[
		B_{\lambda}=\sum(\epsilon_1\cdots\epsilon_n)\left(\epsilon_0+\frac{\epsilon_1}{d_1}+\cdots+\frac{\epsilon_n}{d_n}-2\lambda\right)^{n-1}
		\]
		with the sum taken over all choices of $\epsilon_0,\epsilon_1,\dots,\epsilon_n\in\{\pm1\}$ with $\epsilon_0+\frac{\epsilon_1}{d_1}+\cdots+\frac{\epsilon_n}{d_n}-2\lambda\geq0$. 
	\end{corollary}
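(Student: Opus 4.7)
The strategy mirrors the proof of Corollary~\ref{cor:limitHK}, except that we work at the right endpoint $t=1$ instead of the left endpoint $t=0$, and we use the F-signature interpretation of the left derivative of $\phi_{f,p}$ rather than the Hilbert--Kunz interpretation of the right derivative at $0$. Concretely, Remark~\ref{rk:analytic properties phi and psi} tells us that $\fs(A_p/f)=\partial_{-}\phi_{f,p}(1)$, so Theorem~\ref{thm16} yields
\[
\lim_{p\to\infty}\fs(A_p/f)=\lim_{p\to\infty}\partial_{-}\phi_{f,p}(1)=\partial_{-}\phi_{f}(1).
\]
Thus the corollary reduces to computing the left derivative at $t=1$ of the piecewise polynomial function $\phi_f$ from Theorem~\ref{thm_15}.

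Next, I would differentiate $C_\lambda(t)$ termwise. On the polynomial piece adjacent to $t=1$ from the left, one has
\[
\partial_{-}C_\lambda(1)=n\sum \epsilon_0(\epsilon_0\cdots\epsilon_n)\left(\epsilon_0+\tfrac{\epsilon_1}{d_1}+\cdots+\tfrac{\epsilon_n}{d_n}-2\lambda\right)^{n-1},
\]
the sum being over those $(\epsilon_0,\ldots,\epsilon_n)\in\{\pm1\}^{n+1}$ for which $\epsilon_0+\tfrac{\epsilon_1}{d_1}+\cdots+\tfrac{\epsilon_n}{d_n}-2\lambda\ge 0$; terms violating this inequality at $t=1$ contribute nothing to the left derivative. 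Using $\epsilon_0^2=1$, this simplifies to $n B_\lambda$ with $B_\lambda$ exactly as defined in the statement. Plugging into the formula of Theorem~\ref{thm_15} gives
\[
\partial_{-}\phi_f(1)=\frac{d_1\cdots d_n}{2^n\cdot n!}\left(nB_0+2n\sum_{\lambda\in\mathbb{Z}_{\ge1}}B_\lambda\right)=\frac{d_1\cdots d_n}{2^n(n-1)!}\left(B_0+2\sum_{\lambda\in\mathbb{Z}_{\ge1}}B_\lambda\right),
\]
which is precisely the claimed formula.

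The only genuine concern is that $t=1$ is a boundary point of the domain and could in principle also be a breakpoint of the piecewise polynomial $\phi_f$, so one must be careful to use the one-sided (left) derivative throughout: this is exactly what Theorem~\ref{thm16} provides, and concavity of $\phi_f$ combined with the constraints in the sum defining $C_\lambda$ ensures that differentiation commutes with the finite sum on the relevant left neighborhood of $1$. No additional obstacle arises, and the result follows.
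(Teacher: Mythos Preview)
Your proof is correct and follows essentially the same approach as the paper: both use Theorem~\ref{thm16} together with the identification $\fs(A_p/f)=\partial_-\phi_{f,p}(1)$ from Remark~\ref{rk:analytic properties phi and psi}, and then evaluate the derivative of $\phi_f$ at $t=1$. The paper's proof simply references the explicit derivative formula \eqref{eq:psiderivative} already obtained inside the proof of Theorem~\ref{thm16} and plugs in $t=1$, whereas you recompute that derivative by differentiating each $C_\lambda$ termwise; the two computations are the same, and your observation $\partial_-C_\lambda(1)=nB_\lambda$ (via $\epsilon_0^2=1$) is exactly what makes the constants match.
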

	
	\begin{proof}
		It follows directly by plugging $1$ into the expression for the derivative of $\phi_f(t)$ obtained from Theorem~\ref{thm_15}. See \eqref{eq:psiderivative} for the explicit formula of the derivative. 
	\end{proof}

	%%%%%%%%%%%%%%%%
	%%%%%%%%%%%%%%%%
	\section{Limit $\phi$-function of the Fermat quadric}\label{section:Fermatquadric}
	In this section, we give a combinatorial expression of the limit function $\phi_f$ of Theorem~\ref{thm_15} for the Fermat quadric $f=x_1^2+\cdots+x_n^2$ in terms of Bernoulli and Euler numbers. We also recover a formula for the limit Hilbert-Kunz multiplicity obtained by Gessel and Monsky \cite{GesselMonsky}.
	We set the following notation for this section.
	
	\begin{notation}\label{notation:sectionFermatquadric}
		Let $n\geq2$ and let $f=x_1^{2}+\cdots+x_n^{2}$ be the Fermat quadric. For any prime number $p$, we consider $f$ as a hypersurface in the power series ring $A_p=\mathbb{F}_p\llbracket x_1,\dots,x_n\rrbracket$.
	\end{notation}
	
	Bernoulli and Euler numbers are defined via Taylor expansions of some huperbolic functions. In particular, they satisfy the following classical identities
	\begin{itemize}
		\item $\mathrm{sec}(x) = \sum\limits_{k=0}^{\infty} \frac{(-1)^kE_{2k}}{(2k)!}x^{2k},$ where $E_{2k}$ is the $(2k)^{th}$ Euler number.
		
		\item $\tan(x) = \sum\limits_{k=0}^{\infty} \frac{(-1)^k 2^{2k+2}(2^{2k+2}-1) B_{2k+2}}{(2k+2)!} x^{2k+1},$ where $B_{2k+2}$ is the $(2k+2)^{th}$ Bernoulli number.
	\end{itemize}
	It turns out that the correct generalization of Euler numbers for our purposes are the so-called Euler polynomials $E_k(t)$, which are defined in terms of a power series expansion
	\begin{equation}\label{eq:Eulerpolynomial}
		\frac{2e^{xt}}{e^x + 1} = \sum_{k=0}^{\infty}E_k(t)\frac{x^k}{k!}.
	\end{equation}
	First, we collect some relevant facts about these polynomials in a lemma (see the site https://dlmf.nist.gov/24 for a database).
	
	\begin{lemma}\label{lem:eulerfacts}
		The following facts hold.
		\begin{itemize}
			\item $E_k(t)$ is a degree $k$ polynomial with leading coefficient $1$.
			\item $E_k(t+1/2)+ E_k(t-1/2) = 2(t-1/2)^k$.
			\item $\frac{d}{dt}E_k(t) = kE_{k-1}(t)$.
			\item $E_k(0) = -\frac{2}{k+1}(2^{k+1} - 1)B_{k+1}$.
			\item $E_k(1) = - E_k(0)$ for every $k\ge 1$.
			\item $E_k(\frac{1}{2}) = 2^{-k}E_k$.
		\end{itemize}
	\end{lemma}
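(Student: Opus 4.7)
The plan is to derive all six properties directly from the generating function definition \eqref{eq:Eulerpolynomial}, by formal power series manipulations of $\frac{2e^{xt}}{e^x+1}$. These are standard identities for Euler polynomials, so the work is bookkeeping rather than insight, and I do not expect any serious obstacle.

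For the degree and leading coefficient, expand $\frac{2}{e^x+1}$ at $x=0$ (constant term $1$) and pair with $e^{xt}=\sum_j (xt)^j/j!$; the top-degree-in-$t$ contribution to the coefficient of $x^k/k!$ comes from the $j=k$ term paired with the constant in $\frac{2}{e^x+1}$, yielding $t^k$. For the addition formula $E_k(t+1/2)+E_k(t-1/2)=2(t-1/2)^k$, add the two generating series and use the identity $\frac{e^{x/2}+e^{-x/2}}{e^x+1}=e^{-x/2}$ (clear by multiplying top and bottom by $e^{x/2}$) to collapse the sum to $2e^{x(t-1/2)}=\sum_k 2(t-1/2)^k\, x^k/k!$. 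For the derivative identity, differentiate the generating function in $t$; the resulting factor of $x$ translates into an index shift that yields $kE_{k-1}(t)$. For $E_k(1)=-E_k(0)$ with $k\ge 1$, the Appell-type relation $E_k(t+1)+E_k(t)=2t^k$ follows immediately from $\frac{2e^{x(t+1)}+2e^{xt}}{e^x+1}=2e^{xt}$; setting $t=0$ and using that the right-hand side vanishes for $k\ge 1$ gives the claim.

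The only item that takes a touch more work is the formula $E_k(0)=-\frac{2}{k+1}(2^{k+1}-1)B_{k+1}$, which connects $E_k(0)$ to the Bernoulli numbers. Here I would use the partial fraction decomposition
\[
\frac{2}{e^x+1}=\frac{2}{e^x-1}-\frac{4}{e^{2x}-1},
\]
verified by clearing denominators (the numerator becomes $2(e^x-1)^2$), and then apply the Bernoulli generating function $\frac{x}{e^x-1}=\sum_k B_k\, x^k/k!$ together with its rescaling $x\mapsto 2x$ to read off the coefficient of $x^k/k!$ on the left, which is $E_k(0)$. Finally, for $E_k(1/2)=2^{-k}E_k$, substitute $t=1/2$ in the generating function to obtain $\frac{2e^{x/2}}{e^x+1}=\mathrm{sech}(x/2)$, and compare with the defining series $\mathrm{sech}(x)=\sum_k E_k\, x^k/k!$ for the Euler numbers (with $E_k=0$ for odd $k$). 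Matching coefficients under the rescaling $x\mapsto x/2$ gives the desired identity, with both sides automatically vanishing for odd $k$.
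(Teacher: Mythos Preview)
The paper does not actually prove this lemma: it states the six identities as known facts and simply points the reader to the DLMF database. Your proposal, by contrast, supplies a complete and correct verification of all six items directly from the generating function \eqref{eq:Eulerpolynomial}. Each step checks out: the addition formula follows from the identity $\frac{e^{x/2}+e^{-x/2}}{e^x+1}=e^{-x/2}$ as you note; the Appell relation $E_k(t+1)+E_k(t)=2t^k$ is immediate and yields $E_k(1)=-E_k(0)$ for $k\ge 1$; your partial-fraction identity $\frac{2}{e^x+1}=\frac{2}{e^x-1}-\frac{4}{e^{2x}-1}$ is correct (the singular $2/x$ terms cancel) and gives the Bernoulli formula; and the substitution $t=1/2$ yields $\mathrm{sech}(x/2)$, which matches the paper's $\sec$-based definition of the Euler numbers via $\cosh(ix)=\cos(x)$. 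So your write-up is strictly more than what the paper provides, and nothing is missing.
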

	
	Before stating the main result of this section, we need a combinatorial lemma.
	Given a polynomial $g(t)$, we denote by $\Delta[g](t)$ the polynomial defined by $\Delta[g](t)=g(t)-g(t+1)$. Similarly, for any $m>1$ we define $\Delta^m[g](t)=\Delta[\Delta^{m-1}[g]](t)$.
	We warn the reader that this shall not be confused with the $\Delta$ of Example~\ref{ex:properties of Delta}.
	The following lemma can be proved by a straightforward induction.
	
	\begin{lemma}\label{lemma25}
		Let $m,n\in\mathbb{N}$ and $\ell\in\mathbb{Z}$. Then, the following facts hold.
		\begin{enumerate}
			\item $\displaystyle \Delta^m[t^n](t) = \sum_{j=0}^{m} (-1)^j{m \choose j}(t+j)^n$.
			\item $\displaystyle \sum_{j=0}^m(-1)^j{m \choose j}(\ell + j)^n =
			\left\{
			\begin{array}{c@{\qquad}c}
				0 & n<m \\[4pt]
				
				(-1)^nn! & n = m
			\end{array}
			\right.$.
		\end{enumerate}
	\end{lemma}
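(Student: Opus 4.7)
The plan is to prove the two parts in sequence, with part (1) establishing the binomial expansion by induction on $m$ and part (2) following by a degree/leading-coefficient argument on the polynomial $\Delta^m[t^n](t)$.

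For part (1), I would proceed by induction on $m$. The base case $m=1$ is just the definition $\Delta[t^n](t) = t^n - (t+1)^n$, which matches $\sum_{j=0}^{1}(-1)^j\binom{1}{j}(t+j)^n$. For the inductive step, assume the identity holds for $m-1$ and compute
\[
\Delta^m[t^n](t) = \Delta^{m-1}[t^n](t) - \Delta^{m-1}[t^n](t+1) = \sum_{j=0}^{m-1}(-1)^j\binom{m-1}{j}\bigl((t+j)^n - (t+j+1)^n\bigr).
\]
After re-indexing the second sum via $j \mapsto j-1$, one combines the two sums and uses Pascal's identity $\binom{m-1}{j} + \binom{m-1}{j-1} = \binom{m}{j}$ (with the boundary terms $j=0$ and $j=m$ contributing $\binom{m}{0}$ and $(-1)^m\binom{m}{m}$ respectively) to obtain exactly $\sum_{j=0}^m (-1)^j \binom{m}{j}(t+j)^n$.

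For part (2), I would observe that applying $\Delta$ to any polynomial $g(t)$ of positive degree $k$ with leading coefficient $c$ produces a polynomial $g(t) - g(t+1)$ whose top-degree terms cancel, leaving a polynomial of degree $k-1$ with leading coefficient $-ck$. By induction on $m$, this means $\Delta^m[t^n](t)$ is a polynomial in $t$ of degree $n-m$ when $n \ge m$, and is identically zero when $n < m$. When $n = m$, iterating the leading-coefficient rule gives $\Delta^n[t^n](t) = (-1)^n n(n-1)\cdots 1 = (-1)^n n!$, a constant. Combining these observations with part (1) evaluated at $t = \ell$ yields exactly the claim.

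I do not anticipate any genuine obstacle here: part (1) is a routine induction with Pascal's identity, and the only subtlety in part (2) is tracking the sign $(-1)^n$ that accumulates from the $n$ applications of $\Delta$, which is easily verified by the recursion on leading coefficients. The key insight, which makes the second part immediate once the first is in hand, is recognizing that the right-hand side of (1), evaluated at $t=\ell$, equals the value of the polynomial $\Delta^m[t^n](t)$ at $t=\ell$, so one only needs the degree and leading coefficient of that polynomial rather than a direct combinatorial manipulation of the sum.
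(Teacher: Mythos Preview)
Your proposal is correct and matches the paper's approach: the paper simply states that the lemma ``can be proved by a straightforward induction'' without giving details, and your induction on $m$ for part (1) together with the degree-reduction argument for part (2) is exactly such a straightforward induction.
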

	
	\begin{theorem}\label{thm:squareslim}
		Let $f = x_1^2 + \cdots +x_n^2\in A_p=\mathbb{F}_p\llbracket x_1,\dots,x_n\rrbracket$ and let  $E_n(t)$ be the $n^{th}$ Euler polynomial as in \eqref{eq:Eulerpolynomial}. Then,
		
		\begin{enumerate}
			\item If $n$ is even, the limit function of $f$ on $[0,1]$ is given by 
			$$\phi_f(t) =  t + \frac{(-1)^{\frac{n}{2}}2^{n-1}}{n!}E_n(t).$$ 
			
			\item If $n$ is odd, the limit function  of $f$ on $[0,\frac{1}{2}]$ is given by 
			$$\phi_f(t) =  t + \frac{(-1)^{(n-1)/2}2^{n-1}}{n!}E_n\left(t+ \frac{1}{2}\right).$$
			
			\item If $n$ is odd, the limit function of $f$  on $[\frac{1}{2},1]$ is given by 
			$$ \phi_f(t)=   t - \frac{(-1)^{(n-1)/2}2^{n-1}}{n!}E_n\left(t-\frac{1}{2}\right).$$
		\end{enumerate}
	\end{theorem}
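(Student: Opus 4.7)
The plan is to specialize Theorem~\ref{thm_15} to $d_1 = \cdots = d_n = 2$, obtaining
\[
\phi_f(t) = \frac{1}{n!}\left(C_0(t) + 2 \sum_{\lambda \geq 1} C_\lambda(t)\right),
\]
and to identify the right-hand side with the stated Euler polynomial expression on each of the intervals in the statement.

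My first step would be to reorganize each $C_\lambda(t)$ by grouping the sum over $(\epsilon_1, \ldots, \epsilon_n) \in \{\pm 1\}^n$ according to the number $k$ of $+1$'s. Since $\prod_{i=1}^n \epsilon_i = (-1)^{n-k}$ and $\sum_{i=1}^n \epsilon_i/2 = k - n/2$, and after splitting on $\epsilon_0 = \pm 1$, this yields
\[
C_\lambda(t) = \sum_{k=0}^n (-1)^{n-k}\binom{n}{k}\bigl[(t + k - n/2 - 2\lambda)_+^n - (-t + k - n/2 - 2\lambda)_+^n\bigr].
\]
For $t$ in each of the three intervals of the statement, I would then determine explicitly which $(k,\lambda)$ pairs produce a nonzero positive part (for instance, for $n$ even and $t \in (0,1)$, the first bracketed positive part survives iff $k \geq n/2 + 2\lambda$, the second iff $k \geq n/2 + 2\lambda + 1$). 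This turns $n!\phi_f(t)$ into a finite signed sum of terms $(t + m)^n$ and $(m - t)^n$ with specific integer or half-integer shifts $m$.

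Next, I would recognize this finite sum as $n! t + (-1)^{\lfloor n/2 \rfloor} 2^{n-1} E_n(\text{shift})$ by exploiting the Euler polynomial identities from Lemma~\ref{lem:eulerfacts}. The main tool is the recursion $E_n(x+1) + E_n(x) = 2 x^n$, which when iterated yields
\[
2 \sum_{k=0}^{m-1} (-1)^k (x+k)^n = E_n(x) + (-1)^{m-1} E_n(x+m).
\]
Combined with the reflection $E_n(1-x) = (-1)^n E_n(x)$ and the evaluations $E_n(0) = 0$ for even $n \geq 2$ and $E_n(1/2) = 2^{-n} E_n = 0$ for odd $n$, this gathers the alternating binomial sums into the Euler polynomial expressions of the three cases, with the $n! t$ correction arising from the extremal $k = n$ boundary term.

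The main obstacle is keeping track of the binomial prefactors $\binom{n}{k}$ while applying the Euler recursion, since the number of surviving $\lambda \geq 1$ contributions depends on $n$. A cleaner alternative, which I would use if the direct combinatorial matching becomes unwieldy, is a polynomial-matching argument: since both sides are polynomials of degree at most $n$ on each interval, it suffices to match $n+1$ values or derivative conditions. The values $\phi_f(0) = 0$, $\phi_f(1) = 1$, and (for $n$ odd) $\phi_f(1/2)$ computed from Theorem~\ref{thm_15}, together with the derivative $\phi_f'(0) = 1 + c_n$ from Gessel--Monsky (Corollary~\ref{cor:limitHK}) and $\phi_f'(1)$ from Corollary~\ref{cor:Fsignaturediagonal}, give several constraints; iterating the derivative formula via Theorem~\ref{thm16} yields enough further equations to pin down the polynomial, at which point the identification with the claimed Euler polynomial formula follows using the values and derivatives of $E_n$ at $0$, $1/2$, and $1$ listed in Lemma~\ref{lem:eulerfacts}.
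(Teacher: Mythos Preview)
Your fallback strategy---polynomial matching on each interval---is exactly what the paper does, but your sketch misses the two concrete ingredients that make it work. The paper's choice of $n+1$ conditions for $n$ even is to match \emph{all even-order derivatives} of the two polynomials at $t=0$ and $t=1$ (including the $n$th derivative, i.e.\ the leading coefficient). At $t=0$ the four internal sums in the explicit expression for $C_0(t)+2\sum C_\lambda(t)$ cancel in pairs, so all even derivatives of $\phi_f$ vanish there, matching $E_n(0)=0$ for even $n$. At $t=1$ only the $i=0,1$ terms of the first inner sum survive, and the resulting expression is precisely $\frac{(-1)^n}{(n-k)!}\sum_{j=0}^n(-1)^j\binom{n}{j}(j-n/2+1)^{n-k}$. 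This is handled by Lemma~\ref{lemma25}, the identity $\sum_{j=0}^n(-1)^j\binom{n}{j}(\ell+j)^m=0$ for $m<n$ and $(-1)^n n!$ for $m=n$; that lemma is the engine behind the $t=1$ computation, and your proposal does not mention it.

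Two further issues. First, your suggestion to ``iterate the derivative formula via Theorem~\ref{thm16}'' does not give higher-derivative information: Theorem~\ref{thm16} concerns only convergence of left/right first derivatives of $\phi_{f,p}$. The higher derivatives must be read off directly from the explicit polynomial for $\phi_f$ coming from Theorem~\ref{thm_15}. Second, be careful with circularity: the identification $\phi_f'(0)=1+c_n$ with $c_n$ the $\sec+\tan$ coefficient is Corollary~\ref{cor:Gessel Monsky}, which is \emph{derived from} Theorem~\ref{thm:squareslim}; you may use the raw combinatorial formula of Corollary~\ref{cor:limitHK}, but not its evaluation in Euler/Bernoulli terms.

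Your first approach (iterating the Euler recursion $E_n(x+1)+E_n(x)=2x^n$) is genuinely different, but the obstacle you flag is real: the recursion produces alternating sums $\sum(-1)^k(x+k)^n$ with \emph{unit} coefficients, whereas $\phi_f$ involves $\binom{n}{k}$-weighted sums. Bridging that gap would require an additional identity of roughly the same strength as Lemma~\ref{lemma25}, so this route does not obviously save work.
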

	
	\begin{proof}
		
		First, we outline the strategy. In each equality appearing in the theorem, both the left-hand and right-hand sides are polynomials of degree $n$. So, it is enough to show that these polynomials or their derivatives agree at some collection of $n+1$ points.
		
		(1) We begin with the case where $n$ is even. Because $n$ is even, all expressions of the form $ \frac{\epsilon_{1}}{2} + \cdots + \frac{\epsilon_{n}}{2}-2\lambda$ are integers. Thus, by Remark \ref{rk:points where phi and psi change polynomial}, $\phi_f(t)$ is indeed given by a single polynomial on $[0,1]$. We will show that all of the even-order derivatives of this polynomial and of $ t + \frac{(-1)^{n/2}\cdot 2^{n-1}}{n!}E_n(t)$ agree at both $t=0$ and $t=1$. This gives the requisite $n+1$ conditions, since the $n^{\mathrm{th}}$ derivative is a constant and so we only get one condition from it.
		
		From Theorem~\ref{thm_15}, the explicit expression for $\phi_f(t)$ is given by $$\mu(t) :=\frac{1}{n!}\sum_{m=n/2}^n (-1)^{(n-m)}\cdot {n \choose m}\cdot \left( \begin{array}{l} \sum_{i=0}^{\lfloor (m-n/2)/2\rfloor}(t + m - n/2 - 2\cdot i)^n \\[5pt]
			+ \sum_{i=1}^{\lfloor (m-n/2)/2\rfloor} (t + m - n/2 - 2 i)^n \\[5pt]
			- \sum_{i=0}^{\lfloor (m-n/2-1)/2\rfloor} (-t + m - n/2 - 2 i)^n \\[5pt]
			-\sum_{i=1}^{\lfloor (m-n/2-1)/2\rfloor} (-t + m - n/2 - 2 i)^n
		\end{array} \right).$$
		The ${n \choose m}$ factor comes from the fact that this is the number of ways to choose $\epsilon_{1},\dots,\epsilon_{n}$ so that $\frac{\epsilon_1}{2}+\cdots+\frac{\epsilon_n}{2} = m-\frac{n}{2}$. The first two interior sums correspond to the terms of $\phi_f(t)$ where $\epsilon_0 = 1$ and the second two interior sums correspond to the terms where $\epsilon_0 = -1$. The ranges on these sums are precisely the $i$ for which the interior expression is nonnegative for all $t \in [0,1]$.

		We need to see that this expression equals $$\nu(t) := t + \frac{(-1)^{n/2}\cdot 2^{(n-1)}}{n!} \cdot E_n(t).$$
		We first check that the leading terms of the two polynomials agree. This is equivalent to showing that $\mu^{(n)}(t)$ = $\nu^{(n)}(t)$. Since the leading coefficient of any Euler polynomial is 1, the leading coefficient of $\nu(t)$ is $\frac{(-1)^{n/2}\cdot 2^{(n-1)}}{n!}$.
		
		In the expression for $\mu(t)$, whenever $(m-\frac{n}{2} - 2 i)$ is nonzero, the $t^n$ term of $(t+m-n/2-2 i)^n$ is cancelled by the $t^n$ term of $-(-t + m - n/2 - 2 i)^n$. Thus the only terms which contribute to the leading coefficient of $\mu(t)$ are those which occur when $(m- n/2 - 2 i) = 0$,  that is, when $i = \frac{1}{2}(m- \frac{n}{2})$. In particular, we need $m-\frac{n}{2}$ even. When $m = n/2$, we get one such term from the first sum in the expression for $\mu(t)$ with $i=0$. Otherwise, such a term appears twice, once in the first sum and once in the second sum in the expression for $\mu(t)$. We now distinguish between the cases when $\frac{n}{2}$ is even or odd. If $\frac{n}{2}$ is even, the leading term of $\mu(t)$ is given by $$\frac{1}{n!} \left({n \choose n/2} + 2\sum_{j = n/4 +1}^{n/2} {n \choose 2j}  \right) = \frac{1}{n!}\left(\sum_{j=0}^{n/2}{n \choose 2j} \right) = \frac{2^{n-1}}{n!}. $$
		If $\frac{n}{2}$ is odd, the leading term is given by $$\frac{-1}{n!}\left( {n \choose n/2} + 2\sum_{j = (n+2)/4}^{(n-2)/2} {n \choose 2j+1} \right) = \frac{-1}{n!}\left(\sum_{j = 0}^{(n-2)/2} {n \choose 2j+1} \right) = \frac{-2^{n-1}}{n!}.$$ In both of these expressions we are using the fact that ${n \choose k} = {n \choose n-k}$ to rewrite one copy of each term in the sum with coefficient $2$, and then rewrite the whole expression as a single sum. But this is just the sum of ${n \choose k}$ for $k$ even in the first case and $k$ odd in the second case, both of which equal $2^{n-1}$. So in either case, this agrees with the leading term of $\nu(t)$, as desired.

		Now, consider $\mu(0)$. The first and third sums in the expression for $\mu(0)$ cancel, as do the second and fourth sums, and so $\mu(0) = 0$. Similarly, since $E_n(0) = 0$ for $n$ even and $n\geq 2$, we see that $\nu(0) = 0$ and so $\mu$ and $\nu$ agree at $0$. The exact same argument shows that all order $k$ derivatives of both $\mu$ and $\nu$ with $k$ even and between $2$ and $n-2$ agree at $t=0$.
		
		At $t=1$, the argument is slightly more interesting. We have $\nu(1) = 1$, and for $k$ even with $0 \leq k \leq n-2$, $\nu^{(k)}(1) = 0$. So we need to see that $\mu(1) = 1$ and $\mu^{(k)}(1) = 0$ for $k$ even.
		Substituting $t=1$ into the explicit expression for $\mu(t)$, we can see that the third and fourth sums are offset exactly by a an additive factor of $2$ from the first two sums, and so the only terms that do not cancel in this whole expression are the $i=0$ and $i=1$ terms in the first sum. Thus $\mu(1)$ collapses to $$\frac{1}{n!}\left( (-1)^{n - n/2}{n \choose n/2} +\sum_{m=n/2 + 1}^n (-1)^{(n-m)} {n \choose m} \Big((m - \frac{n}{2}+1)^n + (m - \frac{n}{2} - 1)^n  \Big)\right).$$
		Now, using the fact that $n$ is even, we can rewrite this as $$\frac{(-1)^n}{n!}\sum_{j=0}^n(-1)^j{n \choose j}\Big(j - \frac{n}{2} + 1\Big)^n,$$  where the terms with $m<\frac{n}{2}$ are the second terms in the above sum. Now by Lemma~\ref{lemma25}, this is $\frac{(-1)^{2n}n!}{n!} = 1$.

		The proof for the derivatives of $\mu(t)$ at $t=1$ is, \textit{mutatis mutandis}, the same. For $k$ even with $2 \leq k \leq n-2$, $\mu^{(k)}(1)$ collapses to $$\frac{1}{(n-k)!}\left( (-1)^{n - n/2}{n \choose n/2} +\sum_{m=n/2 + 1}^n (-1)^{(n-m)}{n \choose m} \Big((m -\frac{n}{2}+1)^{n-k} - (m - \frac{n}{2} - 1)^{n-k}  \Big)\right),$$ which rearranges via an analogous argument to $$\frac{(-1)^n}{n!}\sum_{j=0}^n(-1)^j{n \choose j}\Big(j - \frac{n}{2} + 1\Big)^{n-k},$$ which is $0$ by Lemma~\ref{lemma25}.
		
		So, we have seen that all even order (between $0$ and $n-2$) derivatives of $\mu(t)$ and $\nu(t)$ agree at both $t=0$ and $t=1$ and their leading terms agree, and so these polynomials are identical. This proves point (1).

		Cases (2) and (3) are dealt in a similar way, with explicit computations; see the proof of \cite[Theorem 24]{Shideler18} for further details.
		
	\end{proof}

	As a corollary of the previous theorem, we recover a result by Gessel and Monsky \cite{GesselMonsky}, which was proved using different methods. Although, they make a remark that their first (unincluded) proof used Eulerian polynomials.
	
	\begin{corollary}[Gessel--Monsky]\label{cor:Gessel Monsky}
		Let  $f=x_1^2+\cdots+x_n^2$ and for any prime number $p$ let $e_{\HK}(A_p/f)$ be the Hilbert-Kunz multiplicity of $f$ in $A_p=\mathbb{F}_p\llbracket x_1,\dots,x_n\rrbracket$.
		Then,
		\[
		\lim_{p\to\infty}e_{\HK}(A_p/f)= 1+ c_n,
		\]
		where $c_n$ is the coefficient of $z^{n-1}$ in the power series expansion of  $\sec(z)+\tan(z)$.
	\end{corollary}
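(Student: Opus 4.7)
The plan is to combine Theorem~\ref{thm:squareslim} with the derivative convergence of Theorem~\ref{thm16} to express $\lim_{p\to\infty}e_{\HK}(A_p/f)$ in closed form, and then match that expression against the Taylor coefficients of $\sec(z)+\tan(z)$. Concretely, by Remark~\ref{rk:analytic properties phi and psi} we have $e_{\HK}(A_p/f)=\partial_{+}\phi_{f,p}(0)$ for every $p$, and Theorem~\ref{thm16} yields $\lim_{p\to\infty}\partial_{+}\phi_{f,p}(0)=\partial_{+}\phi_f(0)$. Thus it suffices to compute $\partial_{+}\phi_f(0)$ from the explicit formulas in Theorem~\ref{thm:squareslim} and verify the identity $\partial_{+}\phi_f(0)=1+c_n$.

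I would handle the parities of $n$ separately. When $n$ is even, Theorem~\ref{thm:squareslim}(1) gives $\phi_f(t)=t+\tfrac{(-1)^{n/2}2^{n-1}}{n!}E_n(t)$ on all of $[0,1]$, so using $E_n'(t)=nE_{n-1}(t)$ (Lemma~\ref{lem:eulerfacts}) and $E_{n-1}(0)=-\tfrac{2}{n}(2^n-1)B_n$, one obtains
\[
\partial_{+}\phi_f(0)=1-\frac{(-1)^{n/2}\,2^{n}(2^n-1)B_n}{n!}.
\]
On the other hand, $\tan(z)$ contributes to $z^{n-1}$ only for $n$ even, and its coefficient equals $\tfrac{(-1)^{(n-2)/2}2^n(2^n-1)B_n}{n!}=-\tfrac{(-1)^{n/2}2^n(2^n-1)B_n}{n!}$, while $\sec(z)$ contributes $0$ here; this matches $c_n$ as required. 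When $n$ is odd I use Theorem~\ref{thm:squareslim}(2) for the interval $[0,\tfrac12]$ (which contains $0$), apply $E_n'=nE_{n-1}$ together with $E_{n-1}(\tfrac12)=2^{-(n-1)}E_{n-1}$, and obtain
\[
\partial_{+}\phi_f(0)=1+\frac{(-1)^{(n-1)/2}E_{n-1}}{(n-1)!}.
\]
Now $\sec(z)$ contributes to $z^{n-1}$ only for $n$ odd, with coefficient $\tfrac{(-1)^{(n-1)/2}E_{n-1}}{(n-1)!}$, and $\tan(z)$ contributes $0$; again this matches $c_n$.

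The only genuine obstacle is bookkeeping: making sure that the right derivative at $0$ is the appropriate quantity (which is fine since $\phi_f$ is given by a single polynomial on a neighborhood of $0$ in each case by Remark~\ref{rk:points where phi and psi change polynomial}), and that the sign conventions coming from $(-1)^{(n-2)/2}=-(-1)^{n/2}$ and from the definitions of the Euler and Bernoulli numbers used in Lemma~\ref{lem:eulerfacts} are handled consistently with the series expansions of $\tan$ and $\sec$ listed just before that lemma. Once those bookkeeping matters are settled, both parity cases combine into the single statement $\lim_{p\to\infty}e_{\HK}(A_p/f)=1+c_n$.
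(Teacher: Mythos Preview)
Your proposal is correct and follows essentially the same approach as the paper: reduce via Theorem~\ref{thm16} and Remark~\ref{rk:analytic properties phi and psi} to computing $\partial_{+}\phi_f(0)$, then evaluate using the explicit Euler-polynomial expressions of Theorem~\ref{thm:squareslim} together with the identities in Lemma~\ref{lem:eulerfacts}, treating the even and odd parities of $n$ separately and matching against the $\tan$ and $\sec$ coefficients respectively. The paper's proof is the same computation with the same sign bookkeeping.
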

	
	\begin{proof}
		By the convergence of the derivatives of the limit functions, we have that 
		$$ \lim_{p\to\infty}e_{\HK}(A_p/f) = -\lim_{p \rightarrow \infty} \partial_{+} \psi_{f,p}(0) = -\psi_f '(0)=\phi_f'(0).$$
		We first treat the case when $n$ is even. By the previous theorem, $\phi_f ' (0) = 1 + \frac{(-1)^{\frac{n}{2}}2^{n-1}}{n!}E_n'(0)$. 
		Thus, by Lemma~\ref{lem:eulerfacts}, we have that $\phi_f ' (0)$ is
		\[ 1  + \frac{(-1)^{\frac{n}{2}}2^{n-1}}{n!}(nE_{n-1}(0)) = 1 + \frac{(-1)^{\frac{n-2}{2}}2^{n}}{n!}(2^{n} - 1)B_{n}  
		=1 + \frac{(-1)^{j}2^{2j+2}}{(2j+2)!}(2^{2j+2} - 1)B_{2j+2},
		\]
		where in the last equality we used the substitution $n-1 = 2j+1$. This last expression is precisely $1 +$ the coefficient of $z^{n-1}$ in the power series expansion of  $\sec(z)+\tan(z)$, as desired.
		For $n$ odd, we need to show that $1 + \frac{(-1)^{\frac{n-1}{2}}E_{n-1}}{(n-1)!}$ equals $\phi_f'(0) = 1 + \frac{(-1)^{\frac{n-1}{2}}2^{n-1}}{(n-1)!}(E_{n-1}(\frac{1}{2}))$, but this is immediate from  Lemma~\ref{lem:eulerfacts}  again. Thus, we have recovered the desired formulas.
	\end{proof}
	
	Using the same method, one also recovers a similar result for the limit of the F-signature.
	
	\begin{corollary}
		Let  $f=x_1^2+\cdots+x_n^2$ and for any prime number $p$ let $\fs(A_p/f)$ be the F-signature of $f$ in $A_p=\mathbb{F}_p\llbracket x_1,\dots,x_n\rrbracket$.
		Then,
		\[
		\lim_{p\to\infty}\fs(A_p/f)= 1- c_n,
		\]
		where $c_n$ is the coefficient of $z^{n-1}$ in the power series expansion of  $\sec(z)+\tan(z)$.
	\end{corollary}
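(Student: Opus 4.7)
The plan is to mirror the proof of Corollary~\ref{cor:Gessel Monsky} exactly, using the left derivative at $t=1$ instead of the right derivative at $t=0$. By Remark~\ref{rk:analytic properties phi and psi}, the left derivative $\partial_{-}\phi_{f,p}(1)$ equals the F-signature $\fs(A_p/f)$. Applying Theorem~\ref{thm16}, which guarantees convergence of the left derivatives, we obtain
\[
\lim_{p\to\infty}\fs(A_p/f) = \partial_{-}\phi_f(1) = \phi_f'(1),
\]
where the last equality holds because $t=1$ lies in the interior of a polynomial piece (indeed, the right endpoint of the last piece) so the left derivative coincides with the polynomial derivative. Thus it suffices to compute $\phi_f'(1)$ from the explicit formula in Theorem~\ref{thm:squareslim}.

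First, I treat $n$ even. Differentiating $\phi_f(t) = t + \frac{(-1)^{n/2}2^{n-1}}{n!}E_n(t)$ and using $E_n'(t)=nE_{n-1}(t)$ from Lemma~\ref{lem:eulerfacts}, one gets
\[
\phi_f'(1) = 1 + \frac{(-1)^{n/2}2^{n-1}}{(n-1)!}E_{n-1}(1).
\]
The key identity is $E_{n-1}(1) = -E_{n-1}(0)$, again from Lemma~\ref{lem:eulerfacts}, combined with $E_{n-1}(0) = -\frac{2}{n}(2^n-1)B_n$. Substituting gives
\[
\phi_f'(1) = 1 + \frac{(-1)^{n/2}2^n}{n!}(2^n-1)B_n.
\]
Comparing with the computation in the proof of Corollary~\ref{cor:Gessel Monsky}, which showed $\phi_f'(0) = 1 - \frac{(-1)^{n/2}2^n}{n!}(2^n-1)B_n \cdot (-1) = 1 + c_n$, the sign of the second term is reversed, yielding exactly $1 - c_n$ as desired.

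For $n$ odd, I use the formula for $\phi_f(t)$ on $[1/2,1]$: differentiating gives
\[
\phi_f'(1) = 1 - \frac{(-1)^{(n-1)/2}2^{n-1}}{(n-1)!}E_{n-1}\!\left(\tfrac{1}{2}\right).
\]
Using $E_{n-1}(1/2) = 2^{-(n-1)}E_{n-1}$ from Lemma~\ref{lem:eulerfacts}, this simplifies to $1 - \frac{(-1)^{(n-1)/2}E_{n-1}}{(n-1)!}$, which by the expansion $\sec(z) = \sum_k \frac{(-1)^k E_{2k}}{(2k)!}z^{2k}$ is precisely $1 - c_n$. There is no real obstacle here: the argument is a mechanical translation of the proof of Corollary~\ref{cor:Gessel Monsky}, the only substantive inputs being the sign-flipping identity $E_k(1)=-E_k(0)$ in the even case and the evaluation $E_k(1/2)=2^{-k}E_k$ in the odd case. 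The one point to watch is justifying that $\partial_{-}\phi_{f,p}(1) \to \partial_{-}\phi_f(1)$, but this is exactly the conclusion of Theorem~\ref{thm16}, which was proved to handle boundary points of polynomial pieces, and $t=1$ is such a point.
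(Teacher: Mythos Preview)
Your proof is correct and follows essentially the same approach as the paper's own proof, which simply says to proceed as in Corollary~\ref{cor:Gessel Monsky} using the identities of Lemma~\ref{lem:eulerfacts}, noting that the only change is a sign. You have filled in the details the paper leaves implicit: the use of $E_{n-1}(1)=-E_{n-1}(0)$ in the even case and $E_{n-1}(1/2)=2^{-(n-1)}E_{n-1}$ in the odd case are exactly the relations that produce the sign flip from $1+c_n$ to $1-c_n$.
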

	\begin{proof}
		By Theorem \ref{thm16} and Remark \ref{rk:analytic properties phi and psi}, we have $\fs(A_p/f)=\phi_f'(1)$. Then one proceeds exactly as in the proof of Corollary \ref{cor:Gessel Monsky}, applying the relations of Lemma \ref{lem:eulerfacts} to the (derivatives of the) explicit formulas of Theorem \ref{thm:squareslim}. The only difference with the computations of the proof of Corollary \ref{cor:Gessel Monsky} is a minus sign, that leads to the claimed equation.
	\end{proof}
	
	\begin{remark}\label{remarkADE} Using a similar approach, it is possible to compute the limit function $\phi_f$ for the two-dimensional singularities $A_{n-1}$ ($f=x^2+y^2+z^n$, $n\geq2$), $E_6$ ($f=x^2+y^3+z^4$), and $E_8$ ($f=x^2+y^3+z^5$); an explicit computation can be found in
		\cite{Shideler18}.
	\end{remark}

	%%%%%%%%%%%%%%%%
	%%%%%%%%%%%%%%%%
	\section{F-signature series and sum of hypersurfaces}\label{section:fsignatureseries}
	We fix a perfect field $\KK$ of characteristic $p>0$.
	In this chapter, we develop some techniques that will allow us to compute the F-signature function of a hypersurface $f$ which can be written as sums of disjoint set of variables. 
	Our main tools are the  F-signature series, that is the generating series
	\[ 
	\FSS_{f}(z)=\sum_{e=0}^{\infty}\FS_{f}(e)z^e,
	\]
	and the function  $\phi_{f,p}$	of \eqref{eq:defphi_p,f}. Differently from the previous sections, we are not interested in the limit function $\phi_f=\displaystyle\lim_{p\to\infty}\phi_{f,p}$, but instead we will try to obtain more information on $\phi_{f,p}$ for fixed $p$.
	Inspired by \cite{MonTexII}, we introduce the following.
	
	\begin{definition}\label{dfn:operatore bilineare r}
		Let $n\ge 2$ be a fixed integer. Let $\u=(u_0,u_1,\dots)$ and $\v=(v_0,v_1,\dots)$ be elements of $\Lambda$. We define
		\begin{equation*}
			r_n(\u,\v)=(1-p^{n-1}z)\cdot\sum_{e=0}^\infty \alpha(u_e v_e)(p^n z)^e\in \mathbb{Z}\llbracket z\rrbracket.
		\end{equation*}
	\end{definition}
	
	\noindent The following proposition summarizes some of the key properties of the function $r_n(\cdot,\cdot)$.
	
	\begin{proposition}\label{prop:properties of the function r}
		Let $n\ge 2$ be an integer and $\u,\v\in\Lambda$. Then the following facts hold.
		\begin{itemize}
			\item[(a)] The function $r_n(\cdot,\cdot)$ is $\mathbb{Z}$-bilinear.
			\item[(b)] $r_n(\u,\v)=(1-p^{n-1}z)\cdot\alpha(u_0 v_0)+p^nz\cdot r_n(S(\u),S(\v))$.
			\item[(c)] Assume that, for all $e\in\mathbb{N}$, $u_e$ and $v_e$ are a linear combination of $\lambda_i$ with $i<p^e$. Then, for $0\le i,j <p$, we have
			\begin{equation*}
				r_n(\lambda_i\u,\lambda_j\v)=\begin{cases}
					r_n(\u,\v) &\text{if $i=j$}\\
					0 &\text{if $i\ne j$.}
				\end{cases}
			\end{equation*}
			\item[(d)] If $\u\in\Lambda_0$, then $r_n(\u,\Delta)=\alpha(u_0)$.
			\item[(e)] $r_n(R(\u),R(\v))=r_n(\u,\v)$.
		\end{itemize}
	\end{proposition}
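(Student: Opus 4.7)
I would handle the five parts in order of increasing difficulty, saving (c) for last. Part (a) is immediate from the $\mathbb{Z}$-bilinearity of the product in $\Gamma_{\mathbb{Q}}$ and the $\mathbb{Z}$-linearity of $\alpha$, combined with linearity in power-series coefficients. Part (b) is a direct rearrangement: peel off the $e=0$ term of the defining series, then reindex $e'=e-1$ and pull out the factor $p^nz$ to recognize $r_n(S(\u),S(\v))$ in what remains. For part (e), the multiplication rule $\lambda_i\lambda_{p^e-1}=\lambda_{p^e-1-i}$ (for $0\le i<p^e$) applied with $i=p^e-1$ yields $\lambda_{p^e-1}^2=\lambda_0$, which is the multiplicative identity. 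Thus $(R(\u))_e(R(\v))_e=\lambda_{p^e-1}^2\,u_e v_e=u_e v_e$, and the series defining $r_n(R(\u),R(\v))$ collapses to $r_n(\u,\v)$ term by term.

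Part (d) is a little longer but still elementary. By Example~\ref{ex:properties of Delta}, $\u\in\Lambda_0$ gives $\u\cdot\Delta=\alpha(u_0)\Delta$ in $\Lambda$, i.e., $u_e\Delta_e=\alpha(u_0)\,p^{-e}\delta_{p^e}$ for every $e$. The telescoping identity $\lambda_i=(-1)^i(\delta_{i+1}-\delta_i)$ gives $\delta_{p^e}=\sum_{i=0}^{p^e-1}(-1)^i\lambda_i$, so $\alpha(\delta_{p^e})=1$ and therefore $\alpha(u_e\Delta_e)=\alpha(u_0)\,p^{-e}$. Plugging into the definition, the series $\sum_{e\ge 0}\alpha(u_e\Delta_e)(p^nz)^e$ becomes the geometric series $\alpha(u_0)\sum_{e\ge 0}(p^{n-1}z)^e$, whose prefactor $(1-p^{n-1}z)$ then cancels to leave $\alpha(u_0)$.

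The substantive content is in (c). Since $\theta$ is a ring homomorphism, the $\Gamma$-action on $\Lambda$ satisfies $(\lambda_i\cdot\u)_e(\lambda_j\cdot\v)_e=\theta^e(\lambda_i\lambda_j)\,u_e v_e$, and the hypothesis places $u_e v_e$ in the subring $\Gamma_e$. The heart of the argument is then the multiplicativity identity
\[
\alpha\bigl(\theta^e(w)\,x\bigr)=\alpha(w)\,\alpha(x)\qquad\text{for all }w\in\Gamma,\ x\in\Gamma_e.
\]
Granting this and taking $w=\lambda_i\lambda_j$, $x=u_e v_e$, combined with the orthogonality $\alpha(\lambda_i\lambda_j)=\delta_{ij}$ (from $\lambda_i\lambda_j=\sum_k\lambda_k$ starting at $k=|j-i|$), we obtain $\alpha\bigl(\theta^e(\lambda_i\lambda_j)u_e v_e\bigr)=\delta_{ij}\,\alpha(u_e v_e)$, which plugs into the series to give (c).

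\textbf{Main obstacle.} The real work is verifying the identity $\alpha(\theta^e(w)x)=\alpha(w)\alpha(x)$ on $\Gamma\times\Gamma_e$. By bilinearity it reduces to showing $\alpha(\theta^e(\lambda_i)\lambda_k)=[i=0][k=0]$ for $0\le k<p^e$, which I would establish by induction on $e$ using the parity split in the definition of $\theta$ together with the basis-preserving rules $\lambda_k\lambda_{p^ej}=\lambda_{p^ej+k}$ and $\lambda_k\lambda_{p^ej-1}=\lambda_{p^ej-1-k}$ for $0\le k<p^e$. The underlying reason the identity holds is that $\theta^e(\lambda_i)$ is (up to the odd/even correction) a single $\lambda$ whose index is at least $p^e i$, so its product with a $\lambda_k$ of index $k<p^e$ is again a single $\lambda$, equal to $\lambda_0$ only in the trivial case $i=k=0$.
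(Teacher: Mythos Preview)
Your proposal is correct and follows essentially the same route as the paper for all five items. The only cosmetic difference is in (c): the paper computes $\theta^e(\lambda_i\lambda_j)$ directly---writing $\lambda_i\lambda_j=\sum_{k=|i-j|}^{M}\lambda_k$ and observing that $\theta^e(\lambda_k)$ is a single $\lambda$ of index $\ge p^e$ unless $k=0$---whereas you package the same observation as the multiplicativity identity $\alpha(\theta^e(w)\,x)=\alpha(w)\,\alpha(x)$ for $x\in\Gamma_e$; the content is identical.
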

	\begin{proof}
		Point (a) is a consequence of the $\mathbb{Z}$-linearity of $\alpha$ and point (b) descends directly from the definition of $r_n(\cdot,\cdot)$.
		
		In order to prove (c), notice that
		\begin{equation*}
			\begin{split}
				\alpha((\lambda_i \u)_e(\lambda_j \v)_e)=\alpha(\theta^e(\lambda_i)u_e\theta^e(\lambda_j)v_e)=\alpha(\theta^e(\lambda_i\lambda_j)u_ev_e)
			\end{split}
		\end{equation*}
		since $\theta$ is a ring homomorphism. The fact that $\Gamma_e$ is a subring of $\Gamma$  implies that $u_e v_e$ is a linear combinations of elements $\lambda_r$ with $r<p^e$.
		On the other hand, if $i=j$ then there exists an $M>0$ such that
		\begin{equation*}
			\theta^e(\lambda_i\lambda_j)=\theta^e(\sum_{i=0}^M\lambda_i)=\lambda_0+\lambda_{2p^e-1}+\dots,
		\end{equation*}
		where all indices other that $0$ in the sum are greater or equal than $2p^e-1\ge p^e$. The fact that $\alpha(\lambda_i\lambda_j)=\delta_{i,j}$ implies that $\alpha(\theta^e(\lambda_i\lambda_j)u_ev_e)=\alpha(u_e v_e)$.
		If $i\ne j$ then there exist $N,M>0$ such that
		\begin{equation*}	     \theta^e(\lambda_i\lambda_j)=\theta^e(\sum_{i=N}^M\lambda_i)=\lambda_{i_0}+\lambda_{i_1}+\dots,
		\end{equation*}
		where all indices in the sum are greater or equal that $2p^e-1\ge p^e$. Again, the fact that $\alpha(\lambda_i\lambda_j)=\delta_{i,j}$ implies that $\alpha(\theta^e(\lambda_i\lambda_j)u_ev_e)=0$.
		
		Point (d) can be easily proved by observing that $\u\cdot\Delta=\alpha(u_0)\Delta$ (see Example \ref{ex:properties of Delta}), hence
		\begin{equation*}
			\alpha(u_e\Delta_e)=\alpha(u_0)\alpha(\Delta_e)=p^{-e}\alpha(u_0).
		\end{equation*}
		
		Point (e) can be checked directly using the definition of $r_n$ (see Definition \ref{dfn:operatore bilineare r}), the definition of the operator $R$ (see equation \eqref{eq:definition reflection and shifting}) and the multiplication rules in \eqref{eq:formule moltiplicazione lambda}.
	\end{proof}
	
	The main connection between the operator $r_n(\cdot,\cdot)$ and the F-signature is given by the following theorem.
	
	\begin{theorem}\label{lemma:chesaraimportante}\label{cor:from function r to FSS}
		Let $1\le r\le n$ be integers, $f\in \KK\llbracket x_1,\dots,x_{n-r}\rrbracket$ and $g\in \KK\llbracket x_{n-r+1},\dots,x_n\rrbracket$ be power series such that $\phi_{f,p}$ and $\phi_{g,p}$ are $p$-fractals.
		
		Set $\u=\mathscr{L}(\phi_{f,p})$ and $\v=\mathscr{L}(\bar{\phi}_{g,p})$. Then 
		\begin{itemize}
			\item[(a)] $\mathscr{L}(\phi_{f+g,p})=\mathscr{L}(\phi_{f,p})\mathscr{L}(\phi_{g,p})$.
			
			\item[(b)] The F-signature series of the hypersurface $f+g$ in $\KK\llbracket x_1,\dots,x_{n}\rrbracket$ is given by
			\begin{equation*}
				\FSS_{f+g}(z)=\frac{r_n(\u,\v)}{p^{n-1}z-1}.
			\end{equation*}
		\end{itemize}

	\end{theorem}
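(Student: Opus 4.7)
The plan is to encode both sides through the concrete $\KK$-objects $M_e^f := \KK\llbracket x_1,\dots,x_{n-r}\rrbracket/(x_1^{p^e},\dots,x_{n-r}^{p^e})$ and $M_e^g := \KK\llbracket x_{n-r+1},\dots,x_{n}\rrbracket/(x_{n-r+1}^{p^e},\dots,x_{n}^{p^e})$, with $T$ acting by multiplication by $f$ and $g$ respectively. The key identification that I would verify first is
\[
\mathscr{L}(\phi_{f,p})_e = p^{-(n-r)e}\,[M_e^f] \qquad \text{and} \qquad \mathscr{L}(\phi_{g,p})_e = p^{-re}\,[M_e^g]
\]
in $\Gamma_{\mathbb{Q}}$. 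This follows by matching, term-by-term in~\eqref{eq:definizione serie L}, the coefficient of $\lambda_i$ against the $\lambda_i$-coordinate of $[M_e^f]$, namely $(-1)^i\dim_\KK(T^iM_e^f/T^{i+1}M_e^f)=(-1)^i(\alpha_{i+1}(M_e^f)-\alpha_i(M_e^f))$, and then dividing by $p^{(n-r)e}$ to match the normalization in the definition of $\phi_{f,p}$. One also has to observe that the sum in \eqref{eq:definizione serie L} truncates at $i<p^e$ for precisely the same reason that $[M_e^f]\in\Gamma_e$: by Frobenius, $f^{p^e}\equiv 0\pmod{(x_1^{p^e},\dots,x_{n-r}^{p^e})}$, so $T^{p^e}M_e^f=0$.

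With this dictionary in hand, part (a) is almost automatic. Since the variables involved in $f$ and $g$ are disjoint, the isomorphism $M_e^f\otimes_\KK M_e^g\cong \KK\llbracket x_1,\dots,x_n\rrbracket/(x_1^{p^e},\dots,x_n^{p^e})$ intertwines multiplication by $f+g$ with the derivation $T\otimes 1+1\otimes T$, which is precisely the action used to define the product in $\Gamma$. Hence $[M_e^{f+g}]=[M_e^f]\cdot[M_e^g]$, and multiplying by $p^{-ne}=p^{-(n-r)e}p^{-re}$ and applying the identification above to each factor gives $\mathscr{L}(\phi_{f+g,p})_e=\mathscr{L}(\phi_{f,p})_e\cdot \mathscr{L}(\phi_{g,p})_e$ for every $e$.

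For part (b), I would exploit that $\v=R(\mathscr{L}(\phi_{g,p}))$; using the explicit formula for $R$ in~\eqref{eq:definition reflection and shifting} together with part (a),
\[
u_e v_e \;=\; (-1)^{p^e}\lambda_{p^e-1}\cdot \mathscr{L}(\phi_{f,p})_e\cdot \mathscr{L}(\phi_{g,p})_e \;=\; (-1)^{p^e} p^{-ne}\,\lambda_{p^e-1}\cdot[M_e^{f+g}].
\]
Since $\lambda_i\lambda_{p^e-1}=\lambda_{p^e-1-i}$ for $0\le i<p^e$ by~\eqref{eq:formule moltiplicazione lambda} and $[M_e^{f+g}]\in\Gamma_e$, applying $\alpha$ picks out exactly the $\lambda_{p^e-1}$-coordinate of $[M_e^{f+g}]$, which equals $(-1)^{p^e-1}\dim_\KK T^{p^e-1}M_e^{f+g}$. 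But $T^{p^e-1}M_e^{f+g}$ is the image of the multiplication map $(f+g)^{p^e-1}\colon A_p\to A_p/\mathfrak{m}^{[p^e]}$, so by~\eqref{eq:psi} and Lemma~\ref{lemma_F-signaturefractal}(2) its dimension is $p^{ne}\psi_{f+g,p}(1-1/p^e)=\FS_{f+g}(e)$. Multiplying the two parities yields $\alpha(u_e v_e)=-p^{-ne}\FS_{f+g}(e)$, and plugging this into Definition~\ref{dfn:operatore bilineare r} gives $r_n(\u,\v)=-(1-p^{n-1}z)\FSS_{f+g}(z)$, which rearranges to the claimed formula.

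The main obstacle I expect is the initial dictionary $\mathscr{L}(\phi_{f,p})_e=p^{-(n-r)e}[M_e^f]$: establishing this cleanly requires a careful check that the $(-1)^i$ signs line up between the $\lambda_i$-basis and the telescoping difference in~\eqref{eq:definizione serie L}, and that $[M_e^f]$ really lies in $\Gamma_e$ (so that nothing is lost by truncating at $p^e-1$). Once this is secured, the rest of the argument is essentially a controlled bookkeeping of signs $(-1)^{p^e}$ and $(-1)^{p^e-1}$, which combine to $-1$ regardless of whether $p=2$ or $p$ is odd.
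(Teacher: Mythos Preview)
Your argument is correct. Both you and the paper arrive at the key identity $\alpha(u_e v_e)=-p^{-ne}\FS_{f+g}(e)$ and then sum, but you take a more concrete route. The paper stays entirely within the formal machinery already set up: it reads off $\alpha(\mathscr{L}(\bar\phi_{f+g,p})_e)=\bar\phi_{f+g,p}(1/p^e)-\bar\phi_{f+g,p}(0)=\bar\phi_{f+g,p}(1/p^e)-1$ directly from the telescoping definition~\eqref{eq:definizione serie L}, and then uses part~(a) together with the reflection rule~\eqref{eq:multipliction rule reflection} to rewrite $\mathscr{L}(\bar\phi_{f+g,p})=R(\u\cdot\mathscr{L}(\phi_{g,p}))=\u\v$, never touching explicit $\KK$-object classes. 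Your dictionary $\mathscr{L}(\phi_{f,p})_e=p^{-(n-r)e}[M_e^f]$ is essentially the content behind the paper's one-line citation of \cite{MonTexII} for part~(a), so your version has the virtue of making (a) self-contained; for (b) it trades the paper's pre-packaged identities for an explicit extraction of the $\lambda_{p^e-1}$-coordinate via~\eqref{eq:formule moltiplicazione lambda}, which is slightly longer and requires the sign bookkeeping you flag at the end, but is perhaps more transparent about where the F-signature actually enters.
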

	
	\begin{proof}
		Point (a) is proven in the paragraph after \cite[Definition~3.2]{MonTexII}, so we skip directly to the proof of (b). By  Lemma~\ref{lemma_F-signaturefractal}, we have 
		\begin{equation}\label{eq:3515}
			\frac{1}{p^{ne}}\FS_{f+g}(e)=1-\phi_{f+g,p}\left(1-\frac{1}{p^e}\right)= 1-\bar{\phi}_{f+g,p}\Big(\frac{1}{p^e}\Big),
		\end{equation}
		where $\bar{\phi}_{f+g,p}$ is the reflection of $\phi_{f+g,p}$ as in \eqref{eq:reflection}.
		Using the operator $\mathscr{L}$ of \eqref{eq:definizione serie L}, the linear function  $\alpha\left(\sum_{i=0}^nc_i\lambda_i\right)= c_0$ and the reflection operator $R$ of \eqref{eq:definition reflection and shifting}, we can write \eqref{eq:3515} as
		\begin{equation}\label{eq:3524}
			1-\bar{\phi}_{f+g,p}\Big(\frac{1}{p^e}\Big)=-\alpha\Big(\mathscr{L}(\bar{\phi}_{f+g,p})_e\Big)=-\alpha\Big(R\big(\mathscr{L}(\phi_{f+g})\big)_e\Big).
		\end{equation}
		Applying \eqref{eq:multipliction rule reflection} together with point (a), the quantity in \eqref{eq:3524} becomes
		\begin{equation*}
			\begin{split}
				-\alpha\Big(R(\mathscr{L}(\phi_{f+g}))_e\Big)&=-\alpha\Big(R\big(\mathscr{L}(\phi_{f,p})\mathscr{L}(\phi_{g,p})\big)_e\Big)=-\alpha\Big(R\big(\mathscr{L}(\phi_{f,p})\big)_e\mathscr{L}(\phi_{g,p})_e\Big)\\
				&=-\alpha\Big(\big(\mathscr{L}(\bar{\phi}_{f,p})\mathscr{L}(\phi_{g,p})\big)_e\Big).
			\end{split}
		\end{equation*}
		Putting everything together, we obtain that the F-signature function of $f+g$ is given by
		\[
		\FS_{f+g}(e)=-p^{ne}\alpha\Big(\big(\mathscr{L}(\bar{\phi}_{f,p})\mathscr{L}(\phi_{g,p})\big)_e\Big).
		\]
		Thus, the properties of Proposition~\ref{prop:properties of the function r} yield the desired formula.
	\end{proof}

	\begin{example}
		\label{ex:first shifting rules}
		Let $d\ge 2$ be an integer smaller than $p$ and consider $x^d\in\KK\llbracket x\rrbracket$ with corresponding function $\phi_{x^d,p}$. The aim of this example is to compute, when $p\equiv \pm 1 \ \mathrm{mod} \ d$, an explicit relation (called \emph{shifting rule}) for $S(\mathscr{L}(\phi_{x^d,p}))$ that will be the starting point of all our work of the next section.
		
		For $e\ge 1$ and $0\le a\le p^e$, an explicit computation shows that
		\begin{equation*}
			\phi_{x^d,p}\Big(\frac{a}{p^e}\Big)=p^{-e}\dim_\KK\left(\KK\llbracket x\rrbracket/(x^{p^e},x^{ad}\right)=p^{-e}\min\{p^e,ad\}=\min\Big\{1,\frac{ad}{p^e}\Big\},
		\end{equation*}
		that is
		\begin{equation*}
			\phi_{x^d,p}(t)=\min\{1,dt\}
		\end{equation*}
		for every $t\in\mathscr{I}=[0,1]\cap\left\{\frac{a}{p^e}: \ a,e\in\N\right\}$. Define also
		\begin{equation*}
			M=\Big\lfloor\frac{p}{d}\Big\rfloor=\begin{cases}
				\frac{p-1}{d} &\text{if $p\equiv 1 \ \mathrm{mod} \ d$}\\
				\frac{p-(d-1)}{d} &\text{if $p\equiv -1 \ \mathrm{mod} \ d$}.
			\end{cases}
		\end{equation*}
		Let now $p\equiv 1 \ \mathrm{mod} \ d$. An explicit computation gives 
		\begin{equation}\label{eq:calcolo immagine operatori T}
			\begin{split}
				p(T_{p|i}\phi_{x^d,p})(t)=\min\{p,dt+di\}=\begin{cases}
					dt+di &\text{if $i<M$}\\
					\phi_{x^d,p}(t)+p-1 &\text{if $i=M$}\\
					p &\text{if $i>M$}
				\end{cases}
			\end{split}	
		\end{equation}
		for every $t\in\mathscr{I}$ and $0\le i < p$. Then, using \eqref{eq:definizione serie L}, we can compute the first two terms of $\mathscr{L}(\phi_{x^d,p})$, that are $\mathscr{L}(\phi_{x^d,p})_0=\lambda_0$ and
		\begin{equation*}
			\begin{split}
				\mathscr{L}(\phi_{x^d,p})_1=\sum_{i=0}^{p-1}\Big(\min\Big\{1,\frac{di+d}{p}\Big\}-\min\Big\{1,\frac{di}{p}\Big\}\Big)(-1)^i\lambda_i=(-1)^M\frac{1}{p}\lambda_M+\frac{d}{p}\Big(\sum_{i=0}^{M-1}(-1)^i \lambda_i\Big).
			\end{split}
		\end{equation*}
		Notice also that the first entry of $R(\mathscr{L}(\phi_{x^d,p}))=\mathscr{L}(\bar{\phi}_{x^d,p})$ is $-\lambda_0$, where $R$ is the reflection operator defined in \eqref{eq:definition reflection and shifting}. Calling $\u=\mathscr{L}(\phi_{x^d,p})$, we obtain that
		\begin{equation*}
			pS(\u)-\lambda_MR^M(\u)=\Big(d\sum_{i=0}^{M-1}(-1)^i\lambda_i,\dots\Big).
		\end{equation*}
		On the other hand, using the description of the shifting operator $S$ given in \eqref{eq:scrittura esplicita shifting operator} together with the explicit formula of \eqref{eq:calcolo immagine operatori T} and noting that $\mathscr{L}(\text{constant})=0$ and $\mathscr{L}(t)=\Delta$ (see also Example \ref{ex:properties of Delta}), we deduce that
		\begin{equation*}
			pS(\u)=\lambda_M R^M(\u)+(\text{linear comb. of $\lambda_i\Delta$ with $i<M$}).
		\end{equation*}
		Putting together the last two equations, we obtain the \emph{shifting rule}
		\begin{equation}\label{eq:shifting rule p congruo a 1}
			pS(\u)=\lambda_M R^M(\u)+d\sum_{i=0}^{M-1}(-1)^i\lambda_i\Delta.
		\end{equation}
		Let now $p\equiv -1 \ \mathrm{mod} \ d$. Following step by step the argument for the case $p\equiv 1 \ \mathrm{mod} \ d$, we first find that
		\begin{equation*}
			\begin{split}
				p(T_{p|i}\phi_{x^d,p})(t)=\min\{p,dt+di\}=\begin{cases}
					dt+di &\text{if $i<M$}\\
					\bar{\phi}_{x^d,p}(t)+p+d(t-1) &\text{if $i=M$}\\
					p &\text{if $i>M$}
				\end{cases}
			\end{split}	
		\end{equation*}
		for every $t\in\mathscr{I}$ and $0\le i < p$. We find also that $\mathscr{L}(\phi_{x^d,p})_0=\lambda_0$ and
		\begin{equation*}
			\begin{split}
				\mathscr{L}(\phi_{x^d,p})_1=(-1)^M\frac{d-1}{p}\lambda_M+\frac{d}{p}\Big(\sum_{i=0}^{M-1}(-1)^i \lambda_i\Big).
			\end{split}
		\end{equation*}
		Calling $\u=\mathscr{L}(\phi_{x^d,p})$, we obtain that
		\begin{equation*}
			pS(\u)-\lambda_MR^{M+1}(\u)=\Big(d\sum_{i=0}^{M}(-1)^i\lambda_i,\dots\Big).
		\end{equation*}
		Again, using the formula in \eqref{eq:scrittura esplicita shifting operator} we obtain the \emph{shifting rule}
		\begin{equation}\label{eq:shifting rule p congruo a -1}
			pS(\u)=\lambda_MR^{M+1}(\u)+d\sum_{i=0}^{M}(-1)^i\lambda_i\Delta.
		\end{equation}
	\end{example}
	
	We conclude the section by recalling the following classical result about generating series which will be useful to compute the F-signature function from the F-signature series. For a proof see also \cite{CZ23}.
	
	\begin{proposition}\label{prop:successione-serie}
		Let $\{t_m\}_{m\in\N}$ be a sequence of rational numbers and let $\delta_0,\dots,\delta_d\in L\setminus\{0\}$ be distinct elements, with $L$ an extension field of $\Q$. The following statements are equivalent:
		\begin{itemize}
			\item[(a)] There exist $a_0,\dots,a_d\in L$ such that
			
			\begin{equation*}
				t_m=a_0 \delta_0^m+a_{1}\delta_1^{m}+\dots +a_d\delta_d^m
			\end{equation*}
			for every $m\in\N$.
			\item[(b)] There exist $P(z),Q(z)\in\Q[z]$ such that
			\begin{equation*}
				\sum_{m=0}^{\infty}t_mz^m= \frac{P(z)}{Q(z)}
			\end{equation*}
			with $Q(z)=\displaystyle\prod_{i=0}^d(1-\delta_i z)$ and $\deg P(z)<\deg Q(z)=d+1$.
		\end{itemize}
	\end{proposition}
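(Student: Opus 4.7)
The proof plan is a standard partial-fractions / geometric-series argument, which I split into the two implications.

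For (a) $\Rightarrow$ (b), the plan is to compute the generating series termwise. Starting from $t_m = \sum_{i=0}^{d} a_i\delta_i^m$ and swapping the order of summation, one gets
\[
\sum_{m=0}^{\infty} t_m z^m \;=\; \sum_{i=0}^{d} a_i \sum_{m=0}^{\infty} (\delta_i z)^m \;=\; \sum_{i=0}^{d} \frac{a_i}{1-\delta_i z}
\]
as an identity in $L(\!(z)\!)$. Clearing denominators with the common denominator $Q(z)=\prod_i(1-\delta_i z)$ produces a polynomial numerator $\widetilde{P}(z)\in L[z]$ of degree at most $d$. To conclude I need $\widetilde{P}(z)\in\mathbb{Q}[z]$, but this is automatic: $Q(z)\in\mathbb{Q}[z]$ by hypothesis, $\sum t_m z^m\in\mathbb{Q}[\![z]\!]$ since the $t_m$ are rational, and $\widetilde{P}(z) = Q(z)\cdot\sum_m t_m z^m$ lies in both $L[z]$ and $\mathbb{Q}[\![z]\!]$, hence in $\mathbb{Q}[z]$.

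For (b) $\Rightarrow$ (a), the plan is to perform a partial fraction decomposition over $L$. Since the $\delta_i$ are distinct and nonzero, the linear factors $(1-\delta_i z)$ are pairwise coprime in $L[z]$, so there exist unique $c_0,\dots,c_d\in L$ with
\[
\frac{P(z)}{Q(z)} \;=\; \sum_{i=0}^{d} \frac{c_i}{1-\delta_i z}
\]
(this works precisely because $\deg P<\deg Q$, so no polynomial part appears). Expanding each summand as a geometric series gives $P(z)/Q(z)=\sum_m\bigl(\sum_i c_i\delta_i^m\bigr)z^m$, and comparing with $\sum_m t_m z^m$ yields $t_m=\sum_i c_i\delta_i^m$ for every $m$, so one can take $a_i=c_i$.

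Neither direction looks like a real obstacle; the whole content is the interplay between the rationality of the coefficients $t_m$ (and hence of $P,Q$) and the fact that the closed-form coefficients $a_i$ and roots $\delta_i$ are only required to live in the possibly larger field $L$. The only point that deserves a sentence of care is the rationality of the numerator in direction (a) $\Rightarrow$ (b), which is handled as above. Everything else is immediate from uniqueness of power series expansions and the standard partial fractions formula over a field.
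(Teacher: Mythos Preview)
The paper does not prove this proposition itself; it records it as a classical fact about generating series and refers to \cite{CZ23}. Your partial-fractions/geometric-series argument is exactly the standard one and is correct in both directions.

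One quibble on (a)~$\Rightarrow$~(b): you justify $\widetilde P\in\mathbb{Q}[z]$ by writing ``$Q(z)\in\mathbb{Q}[z]$ by hypothesis'', but strictly speaking $Q\in\mathbb{Q}[z]$ is asserted as part of condition~(b), not assumed in~(a). In fact, as literally stated, (a) does not force $Q\in\mathbb{Q}[z]$: if some $a_i=0$ the corresponding $\delta_i$ is unconstrained and $Q$ can fail to have rational coefficients (take $d=0$, $a_0=0$, $\delta_0=\sqrt{2}$). This is a wrinkle in the statement rather than in your argument, and in every application the paper makes of the proposition the denominator $Q$ is already visibly in $\mathbb{Q}[z]$, so the point is harmless.
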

	
	In the previous proposition, the explicit way to pass from the generating series to the sequence is via the partial fraction decomposition.

	%%%%%%%%%%%%%%%%
	%%%%%%%%%%%%%%%%
	\section{F-signature function of Fermat hypersurfaces}\label{section:Fermat}
	
	In this section we investigate the shape of the F-signature function of Fermat hypersurfaces.
	We fix the following notation.
	
	\begin{notation}\label{notation:Fermat}
		Let $n,d\geq2$ be integers and let $\KK$ be a perfect field of characteristic $p>0$. We consider the Fermat hypersurface
		\[
		f=x_1^d+\cdots+x_n^d
		\]
		of degree $d$ in $n$ variables in the power series ring $A_p=\KK\llbracket x_1,\dots,x_n\rrbracket$.
	\end{notation}
	
	When the quotient ring $A_p/f$ is not F-pure, then the F-signature function $\FS_{A_p/f}$ is identically zero. The F-purity of $A_p/f$ (depending on $p$, $d$, and $n$) was established by Hochster and Roberts \cite[Proposition~5.21]{HR76}.
	The situation is as follows:
	\begin{itemize}
		\item If $p\leq d$ or $d>n$, then $A_p/f$  is not F-pure;
		\item If $d=n$, then $A_p/f$  is F-pure if and only if $p\equiv1 \ \mathrm{mod} \ d$;
		\item If $d<n$, then there exists an integer $v$ depending only on $d,n$ such that if $p\geq v$, then $A_p/f$ is F-pure.
	\end{itemize}
	
	Therefore, from now on we will assume $d\leq n$ and $d<p$. We observe also that by \cite[Corollary~3.6.14]{BH98}, the a-invariant of $A_p/f$  is $\mathrm{a}(A_p/f)=d-n$. So, in the case $d=n$ the a-invariant is zero, therefore the ring $A_p/f$ is not strongly F-regular by \cite[Exercise~10.3.28]{BH98}, thus $\fs(A_p/f)=0$. In this case, we can further describe the F-signature function of $f$ as follows.
	
	\begin{proposition}\label{prop:F-signature function s=d}
		Let $d= n\ge 2$ be integers such that $d<p$ and $p\equiv1 \ \mathrm{mod} \ d$. Call $f=x_1^d+\cdots+x_d^d$. Then, we have $\FS_{A_p/f}(e)=1$ for every $e\geq0$.
	\end{proposition}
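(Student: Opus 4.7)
The plan is to use Lemma~\ref{lemma_F-signaturefractal}(2), which expresses $\FS_{A_p/f}(e)$ in terms of the function $\phi_{f,p}$ evaluated at $1-\tfrac{1}{p^e}$. Concretely, since $\phi_{f,p}\bigl(\tfrac{p^e-1}{p^e}\bigr)=p^{-de}\dim_\KK\bigl(A_p/(x_1^{p^e},\dots,x_d^{p^e},f^{p^e-1})\bigr)$, the identity $\FS_{A_p/f}(e)=1$ is equivalent to showing
\[
\dim_\KK\Bigl(A_p/(x_1^{p^e},\dots,x_d^{p^e},f^{p^e-1})\Bigr)\;=\;p^{de}-1.
\]
Since $(x_1^{p^e},\dots,x_d^{p^e})$ has colength $p^{de}$ in $A_p$, it suffices to prove that the image of $f^{p^e-1}$ in $A_p/(x_1^{p^e},\dots,x_d^{p^e})$ is a nonzero scalar multiple of the socle generator $(x_1\cdots x_d)^{p^e-1}$. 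The case $e=0$ is trivial, so assume $e\ge 1$.

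The key step is a multinomial expansion. Write $p=dm+1$, so $k:=(p^e-1)/d$ is a positive integer. Expanding
\[
f^{p^e-1}\;=\;\sum_{\alpha_1+\cdots+\alpha_d=p^e-1}\binom{p^e-1}{\alpha_1,\dots,\alpha_d}\,x_1^{d\alpha_1}\cdots x_d^{d\alpha_d},
\]
a monomial $x_1^{d\alpha_1}\cdots x_d^{d\alpha_d}$ survives modulo $(x_1^{p^e},\dots,x_d^{p^e})$ exactly when $\alpha_i\le k$ for every $i$. Combined with $\sum_i\alpha_i=dk$, this forces $\alpha_i=k$ for all $i$. Thus the image of $f^{p^e-1}$ in the quotient is $\binom{p^e-1}{k,\dots,k}\,(x_1\cdots x_d)^{p^e-1}$.

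It remains to check that the coefficient $\binom{p^e-1}{k,\dots,k}$ is nonzero in $\KK$. The base-$p$ expansion of $p^e-1$ has all digits equal to $p-1$, and since $dm=p-1$, the integer $k=m(p^{e-1}+\cdots+1)$ has base-$p$ expansion $(m,m,\dots,m)$, with no carries occurring in the digit-wise sum $k+\cdots+k=p^e-1$. Lucas' theorem for multinomials therefore yields
\[
\binom{p^e-1}{k,\dots,k}\;\equiv\;\binom{p-1}{m,\dots,m}^{e}\pmod p,
\]
and by Wilson's theorem $\binom{p-1}{m,\dots,m}=(p-1)!/(m!)^d\equiv -1/(m!)^d\not\equiv 0\pmod p$. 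Hence the coefficient is a unit in $\KK$, completing the argument. There is no serious obstacle here; the only technical ingredient is the elementary base-$p$ computation above, which makes the Lucas reduction immediate.
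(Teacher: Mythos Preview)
Your proof is correct and takes a genuinely different route from the paper's. The paper computes the full F-signature series $\FSS_f(z)$ using the representation-ring machinery of Section~\ref{section:fsignatureseries}: it works out the shifting rule for $\mathscr{L}(\phi_{x^d,p})$, multiplies it $(n-1)$ times, applies the reflection operator, and then evaluates the bilinear form $r_n(\u,\v)$ to obtain $\FSS_f(z)=1/(1-z)$. Your argument is far more elementary: you bypass all of this apparatus and instead directly compute the length via Lemma~\ref{lemma_F-signaturefractal}(2), reducing to the claim that $f^{p^e-1}$ maps to a nonzero socle element of $A_p/(x_1^{p^e},\dots,x_d^{p^e})$, which follows from a multinomial expansion together with Lucas' theorem. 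This is essentially a bare-hands Fedder-type calculation, and is close in spirit to (but even more explicit than) the geometric alternative sketched in the remark following the proposition. The paper's approach, while heavier, has the advantage that it sets up the framework reused for the harder cases $n>d$ in Theorem~\ref{thm:shapeFermat}; your approach is self-contained and gives the result in this special case with minimal overhead.
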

	\begin{proof}
		For the element $x\in\KK\llbracket x\rrbracket$, equation \eqref{eq:shifting rule p congruo a 1} yields a shifting rule of the form
		\begin{equation}\label{eq:shifting in proof s=d}
			pS(\mathscr{L}(\phi_{x^d,p}))=\lambda_M R^M(\mathscr{L}(\phi_{x^d,p}))+\text{\Big(linear combination of $\lambda_i\Delta$\Big)}_{i<M}
		\end{equation}
		with $M=(p-1)/d$. Call $\u=\mathscr{L}(\phi_{x_1^d+\dots+x_{n-1}^d,p})$ and multiply the previous relation with itself $n-1$ times. Using the properties of $\Delta$ (see Example \ref{ex:properties of Delta}), the multiplication rules in \eqref{eq:formule moltiplicazione lambda} and point (a) of Theorem \ref{lemma:chesaraimportante}, we find that
		\begin{equation}\label{eq:shifting s-1 times}
			p^{n-1}S(\u)=\text{\Big(lin. comb. of $\lambda_iR^{(n-1)M}(\u)$\Big)}_{i\le (n-1)M} +\text{\Big(lin. comb. of $\lambda_i\Delta$\Big)}_{i<(n-1)M},
		\end{equation}
		where the coefficient of $\lambda_{(n-1)M}$ in the first summand is $1$. Call $\v=\mathscr{L}(\bar{\phi}_{x^d,p})$. Applying the $R$ operator to both sides of equation \eqref{eq:shifting in proof s=d} and using the rules of equation \eqref{eq:rule between R and S} we obtain
		\begin{equation}\label{eq:shifting reflected}
			p S(\v)=\lambda_{p-1-M}R^{M}(\v)+\text{\Big(lin. comb. of $\lambda_i\Delta$ \Big)}_{p-1-M<i\le p-1}.
		\end{equation}
		Using point (b) of Proposition \ref{prop:properties of the function r} we have that
		\begin{equation*}
			r_n(\u,\v)=(1-p^{n-1}z)\cdot (-1)+z\cdot r_n\big(p^{n-1}S(\u),pS(\v)\big).
		\end{equation*}
		Substituting the shifting rules \eqref{eq:shifting s-1 times} and \eqref{eq:shifting reflected} in this equation, the properties of the function $r_n$ (see again Proposition \ref{prop:properties of the function r}) yield that
		\begin{equation*}
			r_n(\u,\v)=p^{n-1}z-1+z\cdot r_n\big(R^{(n-1)M}(\u), R^M(\v)\big),
		\end{equation*}
		noticing that, since $n=d$, then $(n-1)M=p-1-M$. Moreover, since $M$ and $p-1-M$ have the same parity, applying point (e) of Proposition \ref{prop:properties of the function r} we obtain that
		\begin{equation*}
			r_n(\u,\v)=p^{n-1}z-1+z\cdot r_n(\u, \v),
		\end{equation*}
		therefore
		\begin{equation*}
			r_n(\u,\v)=\frac{p^{n-1}z-1}{1-z}.
		\end{equation*}
		Theorem~\ref{cor:from function r to FSS} allows us to conclude that
		\begin{equation*}
			\FSS_{f}(z)=\frac{1}{1-z} \quad \text{and} \quad \FS_{f}(e)=1
		\end{equation*}
		for every $e\in\N$.
	\end{proof}
	
	\begin{remark}
		One can also arrive at Proposition~\ref{prop:F-signature function s=d} through arguments of a more geometric nature as well. Specifically for $f = x_1^d + \dots + x_d^d$ and $p \equiv 1 \bmod d$, the hypersurface $A_p/f$ is readily seen to be $F$-pure but not\footnote{One can also appeal to the fact that $A_p/f$ is not log terminal to see that it is not strongly $F$-regular.} strongly $F$-regular by the well-known Fedder/Glassbrenner's Criterion \cite{Fedder83, Glassbrenner96}. As all uniformly $F$-compatible ideals in an $F$-pure ring are radical \cite{SchwedeCentersofFpurity} and $f$ has an isolated singularity, the test ideal of the associated hypersurface  must be the maximal ideal itself. However, this further implies that the test ideal (which is the smallest uniformly $F$-compatible ideal) agrees with the $F$-splitting prime \cite{AE05} of the hypersurface ( the largest uniformly $F$-compatible ideal). Thus, we see $\FS_{A_p/f}(e)=1$ for every $e\geq0$.
	\end{remark}

	Now, we focus on the case $d<n$. We recall that for $p\gg0$, we have $\mathrm{a}(A_p/f)<0$, so the ring $A_p/f$ is strongly F-regular by \cite[Theorem~7.12]{HH94} (see also \cite{Hoc22}), therefore, $\fs(A_p/f)\neq0$.
	More generally, we can describe the F-signature function of $f$ as follows.

	\begin{theorem}\label{thm:shapeFermat}
		Let $n>d\ge2$ be integers with $p>d$ and $p\equiv\pm1\ \mathrm{mod}\ d$ and assume that 
		\begin{equation*}
			\begin{cases}
				n\cdot\left(\frac{p-1}{d}\right)  &\text{is even if $p\equiv 1\ \mathrm{mod}\ d$}\\
				n\cdot\left(\frac{p+1}{d}\right)  &\text{is even if $p\equiv -1\ \mathrm{mod}\ d$}.
			\end{cases}
		\end{equation*}
		Then, there exist constants $B,C\in\mathbb{Z}$ (depending on $p,d,n$) with $0\le B\le p^{n-3}$ such that the F-signature function of $f=x_1^d+\cdots+x_n^d$ is
		\[
		\FS_{f}(e)= \fs(f)\cdot p^{(n-1)e}+(1-\fs(f))\cdot B^e 
		\]
		with $\fs(f)=\displaystyle\frac{-C}{p^{n-1}-B}\in\mathbb{Q}$.
	\end{theorem}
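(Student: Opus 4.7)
The plan is to reduce the computation of $\FSS_f(z)$ to an explicit rational function by applying Theorem~\ref{cor:from function r to FSS} to the splitting $f = f' + x_n^d$ with $f' = x_1^d + \cdots + x_{n-1}^d$. Setting $\u = \mathscr{L}(\phi_{f',p})$ and $\v = \mathscr{L}(\bar{\phi}_{x_n^d,p}) = R(\mathscr{L}(\phi_{x_n^d,p}))$ gives $\FSS_f(z) = r_n(\u,\v)/(p^{n-1}z - 1)$, so the goal is to produce integer constants $B,C$ with $0 \le B \le p^{n-3}$ such that
\[
r_n(\u,\v) = \frac{p^{n-1}z - 1 + Cz}{1 - Bz};
\]
a partial fraction expansion combined with Proposition~\ref{prop:successione-serie} then yields the claimed closed form with $\fs(f) = -C/(p^{n-1}-B)$.

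I would first derive shifting rules for $\u$ and $\v$. Since both $S$ and $\mathscr{L}$ are multiplicative, $p^{n-1}S(\u)$ is the product of $n-1$ copies of $pS(\mathscr{L}(\phi_{x^d,p}))$, and Example~\ref{ex:first shifting rules} provides
\[
pS(\mathscr{L}(\phi_{x^d,p})) = \lambda_M R^\mu(\mathscr{L}(\phi_{x^d,p})) + (\text{integer combination of } \lambda_i\,\Delta),
\]
where $\mu := M$ for $p \equiv 1 \bmod d$ and $\mu := M+1$ for $p \equiv -1 \bmod d$. Using the absorbency $\mathbf{w}\cdot \Delta = \alpha(w_0)\Delta$ from Example~\ref{ex:properties of Delta} to collapse any product containing a $\Delta$-factor, the $(n-1)$-fold product becomes $p^{n-1}S(\u) = \lambda_M^{n-1} R^{(n-1)\mu}(\u) + \eta\,\Delta$ for some $\eta \in \Gamma$. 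An analogous calculation for $\v = R(\mathscr{L}(\phi_{x_n^d,p}))$, using the commutation rule~\eqref{eq:rule between R and S}, yields $pS(\v) = \lambda_{p-1-M} R^\mu(\v) + \tilde\eta\,\Delta$.

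Plugging both rules into the recursion of Proposition~\ref{prop:properties of the function r}(b) and expanding by bilinearity, the ``main'' contribution to $r_n(p^{n-1}S(\u),pS(\v))$ uses the expansion $\lambda_M^{n-1} = \sum c_i \lambda_i$ with $0 \le c_i \le p^{n-3}$ from~\eqref{eq:stima coefficienti potenza lambda}. By the orthogonality property~(c) only the summand $i = p-1-M$ survives, leaving $B \cdot r_n(R^{(n-1)\mu}(\u), R^\mu(\v))$ with $B := c_{p-1-M}$. The parity hypothesis is equivalent to $n\mu$ being even, so $(n-1)\mu \equiv \mu \pmod{2}$, and combining $R^2 = \mathrm{id}$ with property~(e) reduces this further to $B \cdot r_n(\u,\v)$. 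Each remaining cross term carries a $\Delta$-factor, and properties~(c) and~(d) collapse it to an integer independent of $z$. Since $\alpha(u_0 v_0) = \alpha(\lambda_0 \cdot (-\lambda_0)) = -1$, Proposition~\ref{prop:properties of the function r}(b) produces
\[
r_n(\u,\v) = (p^{n-1}z - 1) + z\bigl(B\, r_n(\u,\v) + C\bigr)
\]
for some integer $C$; solving and inserting into $\FSS_f(z)$, a partial fraction decomposition yields the desired two-term formula for $\FS_f(e)$.

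The main obstacle is the bookkeeping in the third paragraph: one must verify that every cross term involving a $\Delta$-factor indeed collapses to an $e$-independent integer, which requires checking that the support hypothesis of property~(c) is satisfied at each step (using that $R^k(\u) \in \Lambda_0$ and that $\Delta$ has components supported on $\{\lambda_i : i < p^e\}$) before property~(d) can apply. A secondary technical point is handling both cases $p \equiv \pm 1 \bmod d$ uniformly: the reflection exponent $\mu$ differs between them, and the parity hypothesis must be shown to be exactly the condition that aligns $R^{(n-1)\mu}$ with $R^\mu$ modulo $R^2 = \mathrm{id}$.
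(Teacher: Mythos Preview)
Your proposal is correct and follows essentially the same approach as the paper's proof: the same splitting $f=f'+x_n^d$, the same shifting rules for $\u$ and $\v$ obtained by multiplying copies of Example~\ref{ex:first shifting rules} (your $\mu$ is the paper's $M+\alpha$), the same identification of $B$ as the $\lambda_{p-1-M}$-coefficient of $\lambda_M^{n-1}$ via Proposition~\ref{prop:properties of the function r}(c), and the same parity reduction via~(e). The obstacles you flag (support hypotheses for~(c), unification of the two congruence cases) are exactly the bookkeeping the paper handles implicitly, and your verification that $n\mu$ even is equivalent to $(n-1)\mu\equiv\mu\pmod 2$ is correct since $n\mu$ and $(n-2)\mu$ always have the same parity.
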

	\begin{proof}
		Let $M=\big\lfloor \frac{p}{d}\big\rfloor$, $\u=\mathscr{L}(\phi_{x_1^d+\dots+x_{n-1}^d,p})$ and $\v=\mathscr{L}(\bar{\phi}_{x^d,p})$. Let also
		\begin{equation*}
			\alpha=\begin{cases}0 \ &\text{if } p\equiv1\ \mathrm{mod} \ d \\ 1&\text{if } p\equiv-1\ \mathrm{mod} \ d.
			\end{cases}
		\end{equation*}
		
		Starting from the shifting rules \eqref{eq:shifting rule p congruo a 1} and \eqref{eq:shifting rule p congruo a -1}, the same arguments of the first part of the proof of Proposition \ref{prop:F-signature function s=d} yield 
		\begin{equation*}
			p^{n-1}S(\u)=\lambda_M^{n-1}R^{(n-1)(M+\alpha)}(\u) +\text{\Big(lin. comb. of $\lambda_i\Delta$\Big)}_{i\le(n-1)M}
		\end{equation*}
		and
		\begin{equation}
			p S(\v)=\lambda_{p-1-M}R^{M+\alpha}(\v)+\text{\Big(lin. comb. of $\lambda_i\Delta$ \Big)}_{p-1-M\le i\le p-1}
		\end{equation}
		for any $p\equiv \pm 1\ \mathrm{mod}\ d$. Using point (b) of Proposition \ref{prop:properties of the function r} we find that
		\begin{equation}\label{eq:r computed when s>d}
			r_n(\u,\v)=(1-p^{n-1}z)\cdot (-1)+z\cdot r_n\big(p^{n-1}S(\u),pS(\v)\big).
		\end{equation}
		
		Substituting the appropriate shifting rules, the properties of the function $r_n$ (see Proposition \ref{prop:properties of the function r}) together with the properties of the sequence $\Delta$ (see Example \ref{ex:properties of Delta}) yield that
		\begin{equation*}
			r_n(\u,\v)=p^{n-1}z-1+z\cdot \Big(B\cdot r_n\big(R^{(n-1)(M+\alpha)}(\u),R^{M+\alpha}(\v)\big)+C\Big)
		\end{equation*}
		for some constants $B,C\in\mathbb{Z}$. Notice that, by definition, the constant $B$ coincides with the coefficient of $\lambda_{p-1-M}$ in the power $\lambda_M^{n-1}$. The estimate \eqref{eq:stima coefficienti potenza lambda} on the coefficients of $\lambda_M^{n-1}$ yields that $0\le B\le p^{n-3}$. 
		
		The assumption on the parity of $nM$ and $n(M+1)$ imply that $M+\alpha$ and $(n-1)(M+\alpha)$ have the same parity, therefore point (e) of Proposition \ref{prop:properties of the function r} yields
		\begin{equation*}
			r_n(\u,\v)=p^{n-1}z-1+z\cdot (B\cdot r_n(\u,\v)+C).
		\end{equation*}
		Applying point (b) of Theorem~\ref{cor:from function r to FSS} we obtain that
		\begin{equation*}
			\FSS_{f_{d,n}}(z)=\frac{-1+(p^{n-1}+C)z}{(p^{n-1}z-1)(1-Bz)}.
		\end{equation*}
		Then, Proposition~\ref{prop:successione-serie} and partial fraction decomposition lead the claimed F-signature function.
	\end{proof}
	
	We focus on the study of the constants $B,C$ that appear in the statement of Theorem \ref{thm:shapeFermat}, in order to obtain more precise information on the F-signature function of Fermat hypersurfaces.
	
	\begin{proposition}\label{prop:comportamento di B e C}
		Keep notations and assumptions as in Theorem \ref{thm:shapeFermat} and suppose $n\ge 4$.
		\begin{itemize}
			\item[(i)] If $p\equiv 1 \ \mathrm{mod}\ d$ then $B>1$.
			\item[(ii)] If $p\equiv -1 \ \mathrm{mod}\ d$ then $B\ge 1$ if and only if
			\begin{equation}\label{eq:disuguaglianza B=0}
				p(n-d)+n+d-dn\ge 0,
			\end{equation}
			and $B=1$ if and only if equality holds in \eqref{eq:disuguaglianza B=0}. Otherwise, we have that $B=0$ and $C=0$.
			\item[(iii)] If $B=1$ and $d$ is odd, then 
			\begin{equation*}
				\fs(f)=\frac{(d-1)^n-1}{p^{n-1}-1}.
			\end{equation*}
		\end{itemize}
	\end{proposition}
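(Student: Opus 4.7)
The plan is to interpret $B$ combinatorially and then, for part (iii), to unravel the contribution of the cross terms in the proof of Theorem~\ref{thm:shapeFermat}. As was noted in that proof, $B$ is the coefficient of $\lambda_{p-1-M}$ in $\lambda_M^{n-1}$. Iterating the multiplication rule $\lambda_a\lambda_M=\sum_{k=|a-M|}^{\min\{a+M,\,2(p-1)-a-M\}}\lambda_k$ (each summand with multiplicity one), this coefficient equals the number of integer walks $M=a_0,a_1,\ldots,a_{n-2}=p-1-M$ with $a_{i+1}\in[|a_i-M|,\min\{a_i+M,\,2(p-1)-a_i-M\}]$.

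For part~(i), $p\equiv1\pmod d$ yields $M=(p-1)/d$ and target $p-1-M=(d-1)M$, so each walk of length $n-2\geq d-1$ must cover displacement $(d-2)M$. When $d\geq 3$, the walks formed by $(d-2)$ steps of $+M$ combined with $(n-d)$ steps of $0$ provide $\binom{n-2}{n-d}\geq d-1\geq 2$ distinct valid walks. When $d=2$, the target equals the start and one obtains $2M+1\geq 3$ walks of the form $(s,-s,0,\ldots,0)$ with $s\in\{-M,\ldots,M\}$. Hence $B>1$ in every case.

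For part~(ii), $p\equiv-1\pmod d$ gives $M=(p-d+1)/d$ and target $p-1-M=(d-1)M+(d-2)$. Since the maximum reachable index in $\lambda_M^{n-1}$ is $(n-1)M$, one has $B=0$ precisely when $(n-1)M<(d-1)M+(d-2)$, i.e., $(n-d)M<d-2$; a short manipulation shows this is equivalent to $p(n-d)+n+d-dn<0$. When $(n-d)M=d-2$, the target coincides with the maximum $(n-1)M$ and is reached by the unique walk of all $+M$ steps, so $B=1$. In the strict case $(n-d)M>d-2$, multiple walks to the target exist by variants of the argument in~(i) (distributing the excess displacement among the available steps), giving $B>1$. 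Finally, when $B=0$, the inequality $(n-1)M<p-1-M$ forces the cross-term coefficients $c_i$ in $p^{n-1}S(\mathbf{u})$ (supported on $[0,(n-1)M]$) to vanish on the support $[p-1-M,p-1]$ of the coefficients $d_i$ of $pS(\mathbf{v})$, and likewise $B_k d_k=0$ on the same range; the three summands of $C$ obtained at the end of the proof of Theorem~\ref{thm:shapeFermat} all vanish, yielding $C=0$.

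For part~(iii), note first that the equality $(n-d)M=d-2$ with $d$ odd forces $(n-d)M$ to be odd, hence both $n-d$ and $M$ are odd, so $n$ is even. With $B=1$, only the top (maximum-index) contribution to each summand in the cross-term expansion is relevant. Expanding $\prod_{i=1}^{n-1}\bigl(\lambda_M R^{M+1}(\mathbf{u}_i)+d\sum_{j=0}^M(-1)^j\lambda_j\Delta\bigr)$ via the binomial theorem and extracting the coefficient of $\lambda_{p-1-M}\Delta$, with only the top $(-1)^{Mk}\lambda_{kM}$ of $\bigl(\sum_j(-1)^j\lambda_j\bigr)^k$ contributing, one obtains $c_{p-1-M}=(-1)^{(n-1)(M+1)}\bigl[(1-d)^{n-1}-1\bigr]$, which simplifies to $-(d-1)^{n-1}-1$ using $n$ even and $M$ odd. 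The reflected shifting rule gives $d_{p-1-M}=-d(-1)^M=d$. Substituting these into $C=(-1)^{(n-1)(M+1)}d_{p-1-M}+(-1)^{M+2}c_{p-1-M}+c_{p-1-M}d_{p-1-M}$ (the last summand being the only surviving contribution to $\sum_i c_i d_i$ since $c_i=0$ for $i>(n-1)M=p-1-M$) and collecting terms yields $C=d+\bigl((d-1)^{n-1}+1\bigr)-d\bigl((d-1)^{n-1}+1\bigr)=1-(d-1)^n$, whence $\mathrm{fs}(f)=-C/(p^{n-1}-1)=((d-1)^n-1)/(p^{n-1}-1)$. The main obstacle throughout is this last sign bookkeeping: the parities of $n$, $M$, and $\alpha=1$ all interact, and the cancellation to the clean answer $1-(d-1)^n$ only occurs after invoking the parity constraints forced by the case assumption.
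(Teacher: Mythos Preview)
Your argument is correct and tracks the paper's proof closely: $B$ is identified as the coefficient of $\lambda_{p-1-M}$ in $\lambda_M^{n-1}$, parts (i)--(ii) follow from the support/maximum-index analysis of that power (your walk description just makes this explicit), and part (iii) isolates the single surviving cross term at index $p-1-M$. The only difference is cosmetic: the paper computes your $c_{p-1-M}$ (their $a_{n-1}$) by setting up and solving the one-step recurrence $a_1=-d$, $a_{e+1}=(1-d)a_e-d$ with closed form $a_e=(1-d)^e-1$, and then writes $C=(d-1)a_{n-1}+d$ directly, which bypasses the explicit three-term sign bookkeeping you carry out at the end.
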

	\begin{proof}
		Let $M=\lfloor\frac{p}{d}\rfloor$. During the proof of Theorem \ref{thm:shapeFermat}, we noticed that $B$ coincides with the coefficient of $\lambda_{p-1-M}$ in $\lambda_M^{n-1}$.
		
		(i) When $p\equiv 1\ \mathrm{mod}\ d$, using the last  multiplication formula in \eqref{eq:formule moltiplicazione lambda} and the fact that $n\ge\max\{4,d+1\}$, one can easily see that the coefficient of $\lambda_{p-1-M}$ in $\lambda_M^{n-1}$ is always greater then $1$, hence in this case $B>1$.
		
		(ii) When $p\equiv -1\ \mathrm{mod}\ d$, the same argument implies that $B\ne 0$ if and only if $(n-1)M\ge p-1-M$, i.e. when
		\begin{equation*}
			p(n-d)+n+d-dn\ge 0,
		\end{equation*}
		and $B=1$ if and only if equality holds. Notice also that when $B=0$ we have that $(n-1)M< p-1-M$, therefore we obtain that also $C=0$.
		
		(iii) Suppose now that $B=1$ and $d$ odd, so that necessarily $p\equiv -1 \ \mathrm{mod}\ d$ and $M$ is odd. By the proof of point (ii), we have $(n-1)M=p-1-M$, and this implies that $n$ must be even. Follow the proof of Theorem \ref{thm:shapeFermat} and keep its notation. We can write the relevant shifting rules as
		\begin{equation*}
			\begin{split}
				p^{n-1}S(\u)&=\lambda_M^{n-1} \u + a_{n-1}\lambda_{p-1-M}\Delta+\text{\Big(lin. comb. of $\lambda_i\Delta$\Big)}_{i<p-1-M} \quad \text{and}\\
				p S(\v)&=\lambda_{p-1-M}\v-d\sum_{i=0}^{M}(-1)^i\lambda_{p-1-i}\Delta
			\end{split}
		\end{equation*}
		for some $a_{n-1}\in\mathbb{Z}$. The properties of the function $r_n$ (see Proposition \ref{prop:properties of the function r} point (c)) imply that only the coefficients of $\lambda_{p-1-M}$ give a contribution for the constant $C$, and in particular
		\begin{equation}\label{eq:formula per C quando B=1}
			C=da_{n-1}-a_{n-1}+d=(d-1)a_{n-1}+d.
		\end{equation}
		Applying the last formula in \eqref{eq:formule moltiplicazione lambda} we easily find that the number $a_{n-1}$, that comes from the product of \eqref{eq:shifting rule p congruo a -1} with itself $n-1$-times, is the $n-1$-th term of the sequence $\{a_e\}_{e\ge1}$ defined as
		\begin{equation*}
			\begin{cases}
				a_1=-d\\
				a_{e+1}=(1-d)a_e-d &\text{for $e\ge 2$}.
			\end{cases}
		\end{equation*}
		Elementary considerations allow us to conclude that indeed $a_e=(1-d)^e-1$. Coming back to equation \eqref{eq:formula per C quando B=1} we find that
		\begin{equation*}
			C=-(d-1)^n+1
		\end{equation*}
		and Theorem \ref{thm:shapeFermat} leads the claimed F-signature.
	\end{proof}
	
	\begin{remark}\label{rem:considerationsonBandC}
		Some considerations on the previous proposition:
		\begin{itemize}
			\item[(a)] If $d=2$ and $n=3$ then $B=1$ for every prime $p$ that satisfies the conditions of the theorem. Indeed, it is known that in this case the F-signature function is $\FS_f(e)=\frac{1}{2}p^{2e}+\frac{1}{2}$. See \cite[Example~18]{HunLeu}.
			\item[(b)] When $B,C=0$ the F-signature function of $f$ is identically zero  and the ring $A_p/f$ is not F-pure. 
			\item[(c)] Point (ii) of Proposition \ref{prop:comportamento di B e C} implies that, for fixed $d$ and $p\equiv -1\ \mathrm{mod}\ d$, there is an $n_0>d$ such that $B,C=0$ for every $d<n<n_0$ and $B\ne 0$ for every $n\ge n_0$. Similarly, for fixed $d$ and $n$, there is a prime $p_0$ such that $B\ne 0$ if $p\ge p_0$ and $B,C=0$ if $p<p_0$ and $p\equiv -1\ \mathrm{mod}\ d$. In the following table we present some examples of this behaviour when $n=d+1$:
			
			\begin{center}
				\def\arraystretch{1.5}
				\begin{tabular}{c|c|c|c|c}
					& $d=\mathbf{3}$ & $d=\mathbf{4}$ & $d=\mathbf{7}$ &$d=\mathbf{21}$\\
					\hline
					$B,C=0$ &Never  &$p=7$  &$p=13$  &$p=41,83,167,251,293$\\
					\hline
					$B=1$ &$p=5$  & Never  &$p=41$ &$p=419$\\
					\hline
					$B>1$ &$p>5$  &$p>11$  &$p>41$ &$p>419$
				\end{tabular}
			\end{center}
			
			\item[(d)]  Point (ii) of Proposition \ref{prop:comportamento di B e C} yields that $B=1$ if and only if
			\begin{equation}\label{eq:p quando B=1}
				p=\frac{dn-n-d}{n-d}.
			\end{equation}
			The number of primes $p\equiv -1\ \mathrm{mod}\ d$ representable in this way is unknown, but some explicit computations and some conjectures suggest that it might be infinite (see also Remark \ref{rk:primes representable as quadratic polynomials}).
			In any case, let $p_0 \equiv -1\ \mathrm{mod}\ d$ be a prime representable as in \eqref{eq:p quando B=1} for some $d\ge 3$ odd and some $n>d$ (necessarily even). Then, Proposition \ref{prop:comportamento di B e C} implies that the Fermat hypersurface $f$ is strongly F-regular for every prime $p \equiv \pm 1\ \mathrm{mod}\ d$ with $p\ge p_0$ and it is not F-pure for every prime $p \equiv  -1\ \mathrm{mod}\ d$ with $p<p_0$.
		\end{itemize}
	\end{remark}

	It seems not an easy task to determine explicitly the constants $B,C$ of Theorem~\ref{thm:shapeFermat} when $B>1$. The first non-trivial case is when $n=d+1$. 
	
	\subsection{A question by Watanabe and Yoshida}
	In \cite[Proposition~2.4]{WatYosh}, Watanabe and Yoshida prove the following inequality for the F-signature of the Fermat hypersurface $f=x_1^d+\cdots+x_{d+1}^d$ of degree $d$ in $n=d+1$ variables in characteristic $p>d$:
	\begin{equation}\label{eq:watyos}
		\fs(f)\leq \frac{1}{2^{d-1}(d-1)!}.
	\end{equation}
	Consequently, they ask the following natural question \cite[Question~2.6]{WatYosh}.
	\begin{question}[Watanabe-Yoshida]\label{question:watyos}
		Let $d\geq2$ be an integer and let $p>d$. Does equality hold in \eqref{eq:watyos}?
	\end{question}
	
	For $d=2$, it is well known that the previous question has an affermative answer, since $\fs(f)=\frac{1}{2}$ (see Remark~\ref{rem:considerationsonBandC}).
	
	\begin{remark}
		For $d=3$, the upper bound in \eqref{eq:watyos} gives $\fs(f)\leq \frac{1}{8}$. Using the techniques that we have developed, we will prove in Theorem~\ref{thm:FsignatureFermatcubic} that 
		\[
		\fs(f)=\frac{3p(p-1)(p+1)}{8(3p^3-p\pm2) }=\frac{1}{8}-\frac{2(p\pm 1)}{8(3p^3-p\pm 2)}<\frac{1}{8}
		\]
		for every prime $p>3$, thus providing a negative answer to Question~\ref{question:watyos}.
	\end{remark}	
	We are also able to provide a negative answer to Question~\ref{question:watyos} for a conjecturally infinite number of $d>3$.
	
	\begin{proposition}\label{prop:WYnegative}
		Let $d>3$ be an odd integer and assume that $p=d^2-d-1$ is the characteristic of the ring $A_p=\KK\llbracket x_1,\dots,x_{d+1}\rrbracket$. Then, the F-signature of $f=x_1^d+\cdots+x_{d+1}^d$ is
		\begin{equation*}
			\fs(f)<\frac{1}{2^{d-1}(d-1)!}.
		\end{equation*}
	\end{proposition}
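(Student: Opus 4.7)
The plan is to apply Proposition~\ref{prop:comportamento di B e C} with $n = d+1$ to obtain a closed-form expression for $\fs(f)$, and then verify by direct estimation that this value is strictly smaller than the Watanabe--Yoshida bound.

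I first check the hypotheses of Theorem~\ref{thm:shapeFermat} and Proposition~\ref{prop:comportamento di B e C}. Since $p + 1 = d(d-1)$ we have $p \equiv -1 \pmod d$, and $n \cdot \tfrac{p+1}{d} = (d+1)(d-1) = d^2 - 1$ is even because $d$ is odd. The expression $p(n-d) + n + d - dn$ appearing in \eqref{eq:disuguaglianza B=0} specializes, at $n = d+1$, to $p + d + 1 - d^2$; this vanishes exactly when $p = d^2 - d - 1$, so $B = 1$ by Proposition~\ref{prop:comportamento di B e C}(ii). Since $d$ is odd, part (iii) of the same proposition then gives
\[
\fs(f) \;=\; \frac{(d-1)^{d+1} - 1}{p^d - 1}.
\]

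It now suffices to prove $2^{d-1}(d-1)!\bigl((d-1)^{d+1} - 1\bigr) < p^d - 1$, which, by integrality of both sides together with $2^{d-1}(d-1)! \geq 2$, is implied by the cleaner
\[
2^{d-1}(d-1)!\,(d-1)^{d+1} \;\leq\; p^d.
\]
Writing $p = d(d-1) - 1$ and applying Bernoulli's inequality,
\[
p^d \;=\; (d(d-1))^d\Bigl(1 - \tfrac{1}{d(d-1)}\Bigr)^d \;\geq\; d^d (d-1)^d \cdot \tfrac{d-2}{d-1} \;=\; d^d(d-1)^{d-1}(d-2),
\]
so after cancelling $(d-1)^{d-1}$ the task reduces to
\[
2^{d-1}(d-1)!\,(d-1)^2 \;\leq\; d^d(d-2).
\]

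The main obstacle is verifying this last inequality for every odd $d \geq 5$. I would proceed by induction in steps of two: the base case $d = 5$ is a direct check ($6144 \leq 9375$), and for the step from $d$ to $d+2$ a routine computation shows that the right-to-left ratio is multiplied by
\[
\frac{(d+2)^2(d-1)^2}{4(d+1)^3(d-2)}\Bigl(1 + \tfrac{2}{d}\Bigr)^d.
\]
The rational factor strictly exceeds $1/4$ since $(d+2)^2(d-1)^2 - (d+1)^3(d-2) = d^3 + d + 6 > 0$, while $(1 + 2/d)^d$ is monotonically increasing in $d$ and already satisfies $(7/5)^5 > 5$ at $d = 5$. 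Hence the product strictly exceeds $1$, the induction closes, and the entire technical content of the proof is confined to this single elementary estimate.
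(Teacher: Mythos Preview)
Your proof is correct and follows the same overall strategy as the paper: verify that the hypotheses of Theorem~\ref{thm:shapeFermat} and Proposition~\ref{prop:comportamento di B e C} hold with $n=d+1$ and $B=1$, extract the closed form $\fs(f)=\dfrac{(d-1)^{d+1}-1}{p^d-1}$, and then finish with an elementary estimate. The only difference lies in the final estimate. The paper substitutes $t=d-1$, checks $d=5$ by hand, and for $d\geq 7$ uses the cruder chain
\[
\fs(f)=\frac{t^{t+2}-1}{(t^2+t-1)^{t+1}-1}<\frac{t^{t+2}-1}{t^{2t+2}-1}<\frac{1}{t^t}\leq\frac{1}{2^t t!},
\]
the last step being the standard inequality $t^t\geq 2^t t!$ for $t\geq 6$. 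Your Bernoulli-plus-induction argument is a bit longer but has the minor advantage of handling all odd $d\geq 5$ uniformly without a separate base-case check of the target inequality.
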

	\begin{proof}
		First notice that our assumptions are compatible with the conditions of Theorem \ref{thm:shapeFermat}. The assumption that $p=d^2-d-1$ implies that $p\equiv -1\ \mathrm{mod}\ d$ and that equation \eqref{eq:p quando B=1} is satisfied, therefore the constant $B$ appearing in Theorem \ref{thm:shapeFermat} is equal to $1$. By applying point (iii) of Proposition \ref{prop:comportamento di B e C} we obtain that
		\begin{equation*}
			\fs(f)=\frac{(d-1)^{d+1}-1}{(d^2-d-1)^d-1}.
		\end{equation*}
		Using this formula, one can check by hand that our claimed inequality is true for $d=5$. Suppose that $d\ge 7$. Calling $t=d-1$ one finds that
		\begin{equation*}
			\fs(f)=\frac{t^{t+2}-1}{(t^2+t-1)^{t+1}-1}<\frac{t^{t+2}-1}{t^{2t+2}-1} < \frac{1}{t^t}.
		\end{equation*}
		The inequality
		\begin{equation*}
			\frac{1}{t^t}\le \frac{1}{2^{t}t!}
		\end{equation*}
		is satisfied for every $t\ge 6$, yielding the claim.
	\end{proof}
	
	\begin{remark}\label{rk:primes representable as quadratic polynomials}
		The previous lemma gives a negative answer to Question~\ref{question:watyos} for every odd integer $d>3$ with the property that $d^2-d-1$ is prime. The number of primes representable as quadratic polynomials in one variable is one of the most important open problems in number theory. However, Bunyakovsky's conjecture (presented in \cite{Boun57})  suggests that the set of primes representable as $d^2-d-1$ should be infinite. We pursued some numerical computations and found out that there are 69625 primes representable as $d^2-d-1$ with $d$ odd and $3<d < 10^6$, starting from $d=5,7,9,11,17,21,25,27,29,31,\dots$.
	\end{remark}
	
	On the other hand, we can prove that Watanabe-Yoshida bound of \eqref{eq:watyos} is asymptotically sharp. This is a consequence of Theorem~\ref{thm16} and  Corollary~\ref{cor:Fsignaturediagonal}.
	
	\begin{proposition}\label{cor:WYupperboundlimit}
		Let  $f=x_1^d+\cdots+x_{d+1}^d$ and for any prime number $p$ let $\fs(A_p/f)$ be the F-signature of $f$ in $A_p=\mathbb{F}_p\llbracket x_1,\dots,x_{d+1}\rrbracket$.
		Then,
		\[
		\lim_{p\to\infty}\fs(A_p/f)= \frac{1}{2^{d-1}(d-1)!}.
		\]
	\end{proposition}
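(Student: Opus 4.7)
The plan is to apply Corollary~\ref{cor:Fsignaturediagonal} directly with $d_1 = \cdots = d_n = d$ and $n = d+1$, reducing the problem to an evaluation of the constants $B_\lambda$ appearing in the formula
\[
\lim_{p\to\infty}\fs(A_p/f) = \frac{d^{d+1}}{2^{d+1} d!}\Big(B_0 + 2\sum_{\lambda \geq 1} B_\lambda\Big).
\]

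First I would dispose of the terms with $\lambda \geq 2$: since $\epsilon_0 + \sum_{i=1}^{d+1}\epsilon_i/d \leq 1 + (d+1)/d < 4$, the nonnegativity constraint never holds and $B_\lambda = 0$. For $\lambda = 1$, using that $\sum_i \epsilon_i$ has the same parity as $d+1$ and satisfies $|\sum_i \epsilon_i| \leq d+1$, the constraint $\epsilon_0 + \sum_i\epsilon_i/d \geq 2$ admits only $\epsilon_0 = 1$ together with $\epsilon_1 = \cdots = \epsilon_{d+1} = 1$, yielding $B_1 = 1/d^d$.

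The main step is computing $B_0$. I would split the sum according to $\epsilon_0 = \pm 1$: for $\epsilon_0 = 1$, the condition $\sum_i \epsilon_i \geq -d$ excludes only the all-$(-1)$ tuple (again by the parity of $\sum_i \epsilon_i$), while for $\epsilon_0 = -1$, the condition $\sum_i \epsilon_i \geq d$ admits only the all-$(+1)$ tuple. Writing the restricted sum over $\epsilon_0 = 1$ as an unrestricted sum minus the excluded contribution, and evaluating the single surviving term for $\epsilon_0 = -1$, a short calculation gives
\[
B_0 = Q + \frac{2}{d^d}, \qquad Q := \sum_{\epsilon \in \{\pm 1\}^{d+1}} \Big(\prod_{i=1}^{d+1} \epsilon_i\Big)\Big(1 + \frac{\sum_i \epsilon_i}{d}\Big)^d.
\]

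The crux is the identity $Q = 0$. To prove it I would expand $(1+s/d)^d$ binomially (with $s = \sum_i \epsilon_i$) and then each $s^k$ multinomially, reducing $Q$ to a linear combination of terms containing the factor $\prod_{i=1}^{d+1}\big(\sum_{\epsilon_i \in \{\pm 1\}} \epsilon_i^{j_i+1}\big)$. This product vanishes unless every $j_i$ is odd; but all $d+1$ exponents being odd would force $\sum_i j_i \geq d+1 > d \geq k$, contradicting the multinomial constraint $\sum_i j_i = k$. Hence $Q = 0$, so $B_0 = 2/d^d$, and
\[
\lim_{p\to\infty} \fs(A_p/f) = \frac{d^{d+1}}{2^{d+1} d!}\cdot\frac{4}{d^d} = \frac{1}{2^{d-1}(d-1)!}.
\]
The only substantive obstacle is the vanishing of $Q$, which the above parity/multinomial argument resolves cleanly.
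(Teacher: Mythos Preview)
Your proof is correct and follows the same overall strategy as the paper: apply Corollary~\ref{cor:Fsignaturediagonal}, dispose of $B_\lambda$ for $\lambda\geq 2$, compute $B_1=1/d^d$, and reduce the computation of $B_0$ to a vanishing identity. The steps for $\lambda\geq 1$ are essentially identical to the paper's.

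The only genuine difference is in proving the vanishing identity behind $B_0=2/d^d$. The paper groups the terms of $B_0$ by the number of $\epsilon_i$ equal to $-1$, rewrites the claim as
\[
\sum_{i=0}^{d+1}(-1)^i\binom{d+1}{i}(2d+1-2i)^d=0,
\]
and proves this by applying the Euler operator $E=x\,\frac{d}{dx}$ to $(1-x^2)^{d+1}/x$ a total of $d$ times and evaluating at $x=1$; the factor $(1-x^2)$ persists after each application, forcing the value to vanish. Your argument instead expands $(1+s/d)^d$ binomially, then $s^k$ multinomially, and observes that the resulting product $\prod_i\sum_{\epsilon_i}\epsilon_i^{j_i+1}$ vanishes unless every $j_i$ is odd, which is impossible since that would force $\sum j_i\geq d+1>k$. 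Your route is more elementary and self-contained (no generating functions or differential operators), while the paper's operator argument connects the identity to a structural fact about $(1-x^2)^{d+1}$ that would generalize more readily to related finite-difference identities. Both are clean; yours is shorter.
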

	
	\begin{proof}
		From the formula of Corollary~\ref{cor:Fsignaturediagonal}, we obtain
		\[
		\lim_{p\to\infty}\fs(A_p/f)= \frac{d^{d}}{2^{d+1}(d-1)!}\left(B_0+2\sum_{\lambda\in\mathbb{Z}_{\geq1}}B_\lambda\right).
		\]
		We compute the coefficients $B_\lambda$.
		First, observe that the condition $\epsilon_0+\frac{\epsilon_1}{d}+\cdots+\frac{\epsilon_{d+1}}{d}-2\lambda\geq0$ implies that $B_\lambda=0$ for all $\lambda\geq2$.
		This is because the value $\frac{\epsilon_1}{d}+\cdots+\frac{\epsilon_{d+1}}{d}=\frac{1}{d}(\epsilon_1+\cdots+\epsilon_{d+1})\leq\frac{d+1}{d}$ is always strictly smaller than $2$. So, for $\lambda\geq2$ the sum $\epsilon_0+\epsilon_1\frac{1}{d}+\cdots+\epsilon_{d+1}\frac{1}{d}-2\lambda$ is always negative.
		
		For the same reason, when $\lambda=1$ the only possible choice for the $\epsilon_i$'s such that the condition $\epsilon_0+\frac{\epsilon_1}{d}+\cdots+\frac{\epsilon_{d+1}}{d}-2\lambda\geq0$ is satisfied is $\epsilon_0=\epsilon_1=\cdots=\epsilon_{d+1}=1$. Therefore, we have
		\[
		B_1=\sum(\epsilon_1\cdots\epsilon_{d+1})\left(\epsilon_0+\frac{\epsilon_1}{d}+\cdots+\frac{\epsilon_{d+1}}{d}-2\right)^{d}=\left(1+\frac{d+1}{d}-2\right)^d=\frac{1}{d^d}.
		\]
		Finally, we show that $B_0=2/d^d$. To see this, notice that the relation
		\begin{equation*}
			\epsilon_0+\frac{\epsilon_1}{d}+\dots+\frac{\epsilon_{d+1}}{d}=\epsilon_0+\frac{\epsilon_1+\dots+\epsilon_{d+1}}{d}\ge 0
		\end{equation*}
		holds only in the following cases:
		\begin{itemize}
			\item When $\epsilon_0=1$ and at least one of the $\epsilon_i$ with $1\le i\le d+1$ is equal to $1$;
			\item When $\epsilon_0=-1$ and $\epsilon_i=1$ for every $1\le i\le d+1$.
		\end{itemize}
		Collecting the summands that appear in the definition of $B_0$ with respect to the number of $\epsilon_i$ being $-1$ and doing some combinatorics, we find that
		\begin{equation*}
			B_0=\sum_{i=0}^d (-1)^i \binom{d+1}{i}\Big(1+\frac{d+1-2i}{d}\Big)^d+\frac{1}{d^d}.
		\end{equation*}
		Then, proving that $B_0=2/d^d$ is equivalent to showing that
		\begin{equation*}
			\sum_{i=0}^{d+1} (-1)^i \binom{d+1}{i}(2d+1-2i)^d=0,
		\end{equation*}
		that is our new claim to proof. Notice that the binomial formula leads the identity
		\begin{equation*}
			\frac{(1-x^2)^{d+1}}{x}=\sum_{i=0}^{d+1}(-1)^{d+1-i}\binom{d+1}{i}x^{2d+1-2i}
		\end{equation*}
		as rational functions in the variable $x$. Let now $E=x\frac{d}{dx}$ be the operator that derives and then multiplies by $x$. Applying it $d$ times to the previous relation, we find that
		\begin{equation*}
			E^d\bigg(\frac{(1-x^2)^{d+1}}{x}\bigg)=\sum_{i=0}^{d+1}(-1)^{d+1-i}\binom{d+1}{i}(2d+1-2i)^dx^{2d+1-2i}.
		\end{equation*}
		Our claim reduces now to show that this rational function is $0$ when computed in $x=1$. An easy induction shows that the left hand-side is always divisible by $(1-x^2)$ (and that $x=1$ does not kill the denominator). Therefore, when substituting $x=1$, the above quantity vanishes and the claim is proved: $B_0=2/d^d$.
		
		Combining the previous observations, we obtain
		\[
		\lim_{p\to\infty}\fs(A_p/f)= \frac{d^d}{2^{d+1}(d-1)!}\left(B_0+2B_1\right)= \frac{d^d}{2^{d+1}(d-1)!}\cdot\frac{4}{d^d}=\frac{1}{2^{d-1}(d-1)!}
		\]
		as required.
	\end{proof}

	%%%%%%%%%%%%%%%%
	%%%%%%%%%%%%%%%%
	
	\section{Explicit examples}\label{section:examples}
	In this last section, we provide explicit examples of F-signature functions of hypersurfaces.
	
	\subsection{Fermat cubic in four variables}
	
	We compute the F-signature function of the Fermat cubic in $4$ variables over a perfect field of characteristic $p>3$.
	
	\begin{theorem}\label{thm:FsignatureFermatcubic}
		Let $\KK$ be a perfect field of characteristic $p>3$, let $A_p=\KK\llbracket x_1,x_2,x_3,x_4\rrbracket$ and let $f=x_1^3+x_2^3+x_3^3+x_4^3\in A_p$. Then, the F-signature function of $f$ is, for any $e\geq1$,
		\[   \FS_{f}(e)= \fs(f)\cdot p^{3e}+\left(1-\fs(f)\right)\cdot \left(\frac{p+2\cdot (-1)^{\alpha}}{3}\right)^e,
		\]
		with $\displaystyle\fs(f)=\frac{3p(p-1)(p+1)}{8(3p^3-p+2\cdot(-1)^{\alpha+1}) }$ where $\alpha=\begin{cases}0 \ &\text{if } p\equiv1\ \mathrm{mod} \ 3 \\ 1&\text{if } p\equiv2\ \mathrm{mod} \ 3.
		\end{cases}$
	\end{theorem}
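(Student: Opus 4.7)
The plan is to apply Theorem~\ref{thm:shapeFermat} directly with $n=4$ and $d=3$. Since $p>3$ forces $p\equiv\pm 1\pmod 3$, the parity hypotheses $n\cdot(p\mp 1)/d = 4\cdot(p\mp 1)/3$ are automatically satisfied (they are $4$ times an integer). Thus there exist integers $B$ and $C$ with
\[
\FS_f(e) = \fs(f)\cdot p^{3e} + (1-\fs(f))\cdot B^e,\qquad \fs(f)=-\frac{C}{p^3-B},
\]
and it remains to compute $B$ and $C$ explicitly and match them with the stated formula.

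To compute $B$, recall from the proof of Theorem~\ref{thm:shapeFermat} that $B$ is the coefficient of $\lambda_{p-1-M}$ in $\lambda_M^3$, where $M=\lfloor p/3\rfloor$. Applying the product rule in \eqref{eq:formule moltiplicazione lambda} twice — first obtaining $\lambda_M^2 = \sum_{k=0}^{2M}\lambda_k$ (since $2M < p$), then splitting the product $\lambda_M\cdot\lambda_M^2$ into the cases $k\le M$ and $k>M$ — one finds by direct case analysis that $B = (p+2(-1)^\alpha)/3$, which matches the exponential base in the statement. Consequently $p^3-B=(3p^3-p+2(-1)^{\alpha+1})/3$, matching the denominator of the claimed $\fs(f)$ up to the factor $8$.

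The main step is the computation of $C$. Let $\u_1=\mathscr{L}(\phi_{x^3,p})$, so that $\u=\u_1^3=\mathscr{L}(\phi_{x_1^3+x_2^3+x_3^3,p})$ and $\v=R(\u_1)=\mathscr{L}(\bar\phi_{x^3,p})$. The shifting rules \eqref{eq:shifting rule p congruo a 1} and \eqref{eq:shifting rule p congruo a -1} read $pS(\u_1) = \lambda_M R^{M+\alpha}(\u_1) + 3W\Delta$, where $W=\sum_{i=0}^{M-1+\alpha}(-1)^i\lambda_i\in\Gamma$. Cubing this identity and repeatedly using $\u_1\Delta=\Delta$, $R^j(\u_1)\Delta=(-1)^j\Delta$, and $\Delta^2=\Delta$ from Example~\ref{ex:properties of Delta} gives
\[
p^3 S(\u) = \lambda_M^3 R^{3(M+\alpha)}(\u) + 9\lambda_M^2 W\Delta + 27(-1)^{M+\alpha}\lambda_M W^2\Delta + 27W^3\Delta.
\]
Applying $R$ to the shifting rule and using the commutation \eqref{eq:rule between R and S} similarly yields $pS(\v) = \lambda_{p-1-M}R^{M+\alpha}(\v) - 3\lambda_{p-1}W\Delta$.

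Substituting these into the recursion $r_4(\u,\v)=p^3 z - 1 + z\cdot r_4(p^3 S(\u),pS(\v))$ from Proposition~\ref{prop:properties of the function r}(b) and expanding by bilinearity, property (c) of the same proposition isolates the main contribution $B\cdot r_4(\u,\v)$ from pairing $\lambda_M^3$ with $\lambda_{p-1-M}$, while property (d) evaluates every $\Delta$-cross-term as a scalar feeding into $C$. The bookkeeping requires the coefficients of $\lambda_{p-1-i}$ in $\lambda_M^3$ (extending the $B$-calculation), of $\lambda_{p-1-M}$ in $\lambda_M^2 W$ and in $\lambda_M W^2$, and of $\lambda_0$ in $W^3$; these combine into alternating sums that simplify via $\lambda_i\lambda_{p-1-j}=\lambda_{p-1-j-i}$. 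The main obstacle is verifying that the two cases $\alpha=0$ and $\alpha=1$ both collapse to the single closed form $C = -p(p^2-1)/8$ (which is indeed an integer since $8\mid(p-1)(p+1)$ for odd $p$). Substituting into $\fs(f)=-C/(p^3-B)$ then yields
\[
\fs(f) = \frac{p(p^2-1)/8}{(3p^3-p+2(-1)^{\alpha+1})/3} = \frac{3p(p-1)(p+1)}{8(3p^3-p+2(-1)^{\alpha+1})},
\]
which completes the proof.
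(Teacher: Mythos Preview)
Your approach via Theorem~\ref{thm:shapeFermat} with the $3{+}1$ split is valid: the parity hypothesis is automatic for $n=4$, your identification of $B$ as the $\lambda_{p-1-M}$-coefficient of $\lambda_M^3$ is correct and does yield $B=(p+2(-1)^\alpha)/3$, and your cubed shifting rule for $p^3 S(\u)$ and reflected rule for $pS(\v)$ are right. The gap is that the computation of $C$ is the entire substance of this theorem beyond Theorem~\ref{thm:shapeFermat}, and you do not carry it out---you merely assert $C=-p(p^2-1)/8$. Your inventory of the required coefficients is also incomplete: writing $p^3 S(\u)=A_1+A_2+A_3+A_4$ (your four summands) and $pS(\v)=B_1+B_2$, the constant $C$ collects contributions from all \emph{seven} cross-terms $r_4(A_i,B_j)$ with $(i,j)\neq(1,1)$, whereas you list four; the $\Delta$--$\Delta$ pairings $(A_k,B_2)$ for $k=2,3,4$ are missing, and the item ``coefficient of $\lambda_0$ in $W^3$'' does not match any of the actual terms (for instance $(A_4,B_1)$ requires the $\lambda_{p-1-M}$-coefficient of $W^3$). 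Each of these seven terms is an alternating sum in $M$ whose closed-form evaluation is the real work, and it remains undone.

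The paper takes a different route: a $2{+}2$ split with $\u=\mathscr{L}(\phi_{x^3+y^3,p})$ and $\v=R(\u)$, so that both factors carry only the \emph{squared} shifting rule. Because $\lambda_M^2=\sum_{k=0}^{2M}\lambda_k$ has a closed form while $\lambda_M^3$ does not, the $\Delta$-coefficients in $p^2 S(\u)$ can be written down explicitly, and the paper reduces the analogue of $C$ to two concrete finite sums over $M+1\le i\le 2M$, which it evaluates directly to $p(p^2-1)/8$. Your route buys a free $B$ from the general theorem at the price of a harder $C$; the paper's $2{+}2$ split computes both from scratch but keeps the constant term tractable enough to actually finish.
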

	\begin{proof}
		We start giving the details when $p\equiv1\ \mathrm{mod} \ 3$. Let $M=\lfloor \frac{p}{3}\rfloor=\frac{p-1}{3}$. In this setting, $M$ is always even. The shifting rule \eqref{eq:shifting rule p congruo a 1} gives the relation
		\begin{equation*}
			pS(\mathscr{L}(\phi_{x^3,p}))=\lambda_M\mathscr{L}(\phi_{x^3,p})+3\sum_{i=0}^{M-1}(-1)^i\lambda_i\Delta.
		\end{equation*}
		Call $\u=\mathscr{L}(\phi_{x^3+y^3,p})$ and multiply the previous relation with itself one time. Using the properties of $\Delta$ (see Example \ref{ex:properties of Delta}), the multiplication rules in \eqref{eq:formule moltiplicazione lambda} and the first formula of Lemma \ref{lemma:chesaraimportante}, after a lenghty but elementary computation we find that
		\begin{equation*}
			p^2 S(\u)=\Big(\sum_{i=0}^{2M}\lambda_i\Big)\u+\Big(3\cdot\sum_{i=0}^{2M-1}(-1)^{i}\big(p-1-i-\Big\lfloor\frac{i}{2}\Big\rfloor\big)\lambda_i\Big)\Delta.
		\end{equation*}
		Let $\v=R(\u)$. Applying the $R$ operator to both sides and using the rules of equation \eqref{eq:rule between R and S} we obtain that
		\begin{equation*}
			p^2 S(\v)=\Big(\sum_{j=p-1-2M}^{p-1}\lambda_j\Big)\v-\Big(3\cdot\sum_{i=p-2M}^{p-1}(-1)^{i}(i-\Big\lfloor\frac{p-1-i}{2}\Big\rfloor)\lambda_i\Big)\Delta.
		\end{equation*}
		Point (b) of Proposition \ref{prop:properties of the function r} implies that
		\begin{equation*}
			r_4(\u,\v)=(1-p^{3}z)\cdot (-1)+z\cdot r_4\big(p^2S(\u),p^2S(\v)\big).
		\end{equation*}
		Substituting the appropriate shifting rules and using the properties of the function $r_4$ (see again Proposition \ref{prop:properties of the function r}) we find that
		\begin{equation}\label{eq:r4 in dimostrazione della morte}
			r_4(\u,\v)=\frac{-1+(p^3-C_p)z}{1-(M+1)z}
		\end{equation}
		where
		\begin{equation*}
			C_p=9\sum_{i=M+1}^{2M-1}\Big(p-1-i-\Big\lfloor\frac{i}{2}\Big\rfloor\Big)\Big(i-\Big\lfloor\frac{p-1-i}{2}\Big\rfloor\Big)+ 6\sum_{i=M+1}^{2M}(-1)^{i}(i-\Big\lfloor\frac{p-1-i}{2}\Big\rfloor).
		\end{equation*}
		Elementary considerations allow to simplify the last summand in the previous expression, obtaining
		\begin{equation*}
			6\sum_{i=M+1}^{2M}(-1)^{i}(i-\Big\lfloor\frac{p-1-i}{2}\Big\rfloor)=6\cdot \frac{M}{2}=3M=p-1,
		\end{equation*}
		and one can also explicitely compute the value of the first summand, obtaining
		\begin{equation*}
			9\sum_{i=M+1}^{2M-1}\Big(p-1-i-\Big\lfloor\frac{i}{2}\Big\rfloor\Big)\Big(i-\Big\lfloor\frac{p-1-i}{2}\Big\rfloor\Big)=\frac{p^3}{8}-\frac{9p}{8}+1.
		\end{equation*}
		This implies that $C_p=\frac{p^3}{8}-\frac{p}{8}$ and equation \eqref{eq:r4 in dimostrazione della morte} becomes
		\begin{equation*}
			r_4(\u,\v)=\frac{-1+(\frac{7p^3}{8}+\frac{p}{8})z}{1-(\frac{p+2}{3})z}
		\end{equation*}
		When, instead, $p\equiv2\ \mathrm{mod} \ 3$ we have that $M=\lfloor \frac{p}{3}\rfloor=\frac{p-2}{3}$ is always odd. Therefore, in view of the shifting rule \eqref{eq:shifting rule p congruo a -1}, one can carry out the same arguments as in the case $p\equiv1\ \mathrm{mod} \ 3$, with almost identical computations. We can merge the two cases by saying that if 
		\begin{equation*}
			\alpha=\begin{cases}0 \ &\text{if } p\equiv1\ \mathrm{mod} \ 3 \\ 1&\text{if } p\equiv2\ \mathrm{mod} \ 3
			\end{cases}
		\end{equation*}
		then, for every prime $p>3$, 
		\begin{equation*}
			r_4(\u,\v)=\frac{-1+(\frac{7p^3}{8}+\frac{p}{8})z}{1-(\frac{p+2\cdot (-1)^\alpha}{3})z}.
		\end{equation*}
		Applying Theorem~\ref{cor:from function r to FSS} we find that
		\begin{equation*}
			\FSS_{f}(z)=\frac{-1+(\frac{7p^3}{8}+\frac{p}{8})z}{(p^3z-1)(1-(\frac{p+2\cdot (-1)^\alpha}{3})z)},
		\end{equation*}
		which in turn implies our claim for the F-signature function, by applying Proposition~\ref{prop:successione-serie} and the partial fraction decomposition.
	\end{proof}
	
	\begin{example}
		The F-signature function of $f=x_1^3+x_2^3+x_3^3+x_4^3$ is
		\begin{equation*}
			\FS_{f}(e)=\frac{15}{124}5^{3e}+\frac{109}{124} \ \ \ \  \text{ for }p=5,  \quad \ \ \ \text{and} \ \ \ \quad \FS_{f}(e)=\frac{21}{170} 7^{3e} + \frac{149}{170} 3^e \ \ \ \ \text{for} \ p=7.
		\end{equation*}
		Notice that when $p=5$ the F-signature function is a polynomial in $p^e$, while for $p=7$ it is not polynomial or quasi-polynomial in $p^e$.
	\end{example}

	\subsection{Non-diagonal hypersurfaces}
	
	We conclude with two examples of F-signature which show how these methods can be applied also to non-diagonal hypersurfaces. We rely on some computations from \cite[Examples~5.1~and~5.2]{MonTexII}.
	
	\begin{example}\label{ex:nondiagonal1}
		Let $\KK$ be a perfect field of characteristic $p=3$ and let $f=y^3-x^4+x^2y^2+w^3-z^4+z^2w^2$. We shall calculate the $F$-signature function of $\KK\llbracket x,y,z,w\rrbracket/(f)$.
		In order to do so, we observe that we can write the hypersurface $f$ as $f(x,y,z,w)=g(x,y)+g(z,w)$, where $g(x,y)=y^3-x^4+x^2y^2$. Notice that $\phi_{g,3}$ is a $p$-fractal, since it is a two-variable polynomial (see \cite{MonTexI}).

		Set ${\bf a}=\mathscr{L}(\phi_{g,3})$ and ${\bf b}=\mathscr{L}(\bar{\phi}_{g,3})=R({\bf a})$.
		Then, by \cite[Example 5.1]{MonTexII}, we have the relations
		\begin{equation}\label{eq:examplenondiag1}
			9S({\a})={\b}+\lambda_1{\b}+9\Delta \quad\quad \text{and}\quad \quad 9S(\b)=\lambda_2 {\a}+\lambda_1 {\a} -9\lambda_2\Delta.
		\end{equation}
		For any $\u,\v\in\Lambda$, we consider the operator of Definition~\ref{dfn:operatore bilineare r} \[
		r_4(\u,\v)=(1-27z)\cdot\sum_{e=0}^\infty\alpha(u_ev_e)(81z)^e.
		\]
		Corollary~\ref{cor:from function r to FSS} yields $\FSS_{f}(z)=\FSS_{g+g}(z)=(27z-1)^{-1} r_4(\a,\b)$. 
		Using properties of Proposition~\ref{prop:properties of the function r} and the fact that $\alpha(a_0b_0)=-1$, we can compute
		\begin{equation*}
			\begin{split}
				r_4(\a,\b)&=(1-27z)\cdot\alpha(a_0b_0)+z\cdot r_4(9S(\a),9S(\b))=\\
				&=-1+27z+z\cdot r_4(\b+\lambda_1\b+9\Delta,\lambda_2\a+\lambda_1\a-9\lambda_2\Delta)=\\
				&=-1+27z+z\cdot r_4(\b,\a).
			\end{split}
		\end{equation*}
		This implies that $r_4(\a,\b)=\frac{27z-1}{1-z}$, hence $\FSS_{f}(z)=\frac{1}{1-z}$, that is 
		\begin{equation*}
			\FS_{f}(e)=1
		\end{equation*}
		for all $e\geq1$. In particular, the F-signature of $f$ is zero.
	\end{example}
	
	\begin{example}\label{ex:nondiagonal2}
		Let $\KK$ be a perfect field of characteristic $p=3$, and let $f=y^3-x^4+x^2y^2+zw(z+w)$. We shall calculate the $F$-signature function of $\KK\llbracket x,y,z,w\rrbracket/(f)$.
		We proceed in a similar way as in Example~\ref{ex:nondiagonal1} and we	write $f(x,y,z,w)=g(x,y)+h(z,w)$, where	
		$g(x,y)=y^3-x^4+x^2y^2$ and $h(z,w)=zw(z+w)$.
		We set $\a$ and $\b$ as in Example~\ref{ex:nondiagonal1}, and $\c=\mathscr{L}(\phi_{h,3})$. In addition to the relations \eqref{eq:examplenondiag1}, from \cite[Example 5.1]{MonTexII} we have 
		\[
		9S({\c})={\c} +\lambda_1 {\c} +(6\lambda_0-3\lambda_1)\Delta.
		\]
		So, in a similar way as before, we can compute
		\begin{equation*}
			r_4(\c,\a)=1+33z+2z\cdot r(\c,\b)\quad\quad\text{and}\quad\quad r_4(\c,\b)=-1+24z+z\cdot r(\c,\a),
		\end{equation*}
		which yield $r_4(\c,\b)=\frac{33z^2+25z-1}{1-2z^2}$. This implies that
		\begin{equation*}
			\FSS_{f}(z)=\FSS_{g+h}(z)=\frac{33z^2+25z-1}{(1-2z^2)(27z-1)}.
		\end{equation*}
		Thus, applying the partial fraction decomposition, the F-signature function of $f$ is
		\[
		\FS_{f}(e)=\frac{21}{727}3^{3e}+\frac{1412-887\sqrt{2}}{4\cdot 727}(\sqrt{2})^e+\frac{1412+887\sqrt{2}}{4\cdot 727}(-\sqrt{2})^e.
		\]
		In particular, the F-signature of $f$ is $\fs(f)=\frac{21}{727}$.
	\end{example}
	
	\begin{example}
		Let $\KK$ be a perfect field of characteristic $p=7$, and let $f=x^3+y^4+z^4+zw^3$. We compute the F-signature function of $\KK\llbracket x,y,z,w\rrbracket/(f)$. We write $f(x,y,z,w)=g(x,y)+h(z,w)$, where $g=x^3+y^4$ and $h=z^4+zw^3$. 
		Set $\u=\mathscr{L}(\phi_{g,7})$, $\a=\mathscr{L}(\phi_{h,7})$ and $\b=\mathscr{L}(\bar{\phi}_{h,7})$. For any $\v,\v'\in\Lambda$, we consider the operator of Definition~\ref{dfn:operatore bilineare r}
		\begin{equation*}
			r(\v,\v')=(1-7^3z)\cdot\sum_{e=0}^\infty\alpha(v_ev_e')(7^4z)^e.
		\end{equation*}
		From \cite[Examples~5.2]{MonTexII}, we have
		\begin{equation*}
			r_4(\u,\b)=-1+339z+z\cdot r(\u,\a)\quad\quad\text{and}\quad\quad r_4(\u,\a)=\frac{1+488z+679z^2+339z^3}{1-2z-z^3}.
		\end{equation*}
		This gives that
		\begin{equation*}
			\FSS_{f}(z)=\FSS_{g+h}(z)=\frac{-680z^3+190z^2-342z+1}{(343z-1)(z^3+2z-1)}.
		\end{equation*}
		Thus, the F-signature function of $f$ is
		\[
		\FS_{f}(e)=\frac{182139}{40118308}7^{3e}+A\alpha^e+B\beta^e+C\bar{\beta}^e,
		\]
		where $1/\alpha, 1/\beta$ and $1/\bar{\beta}$ are the complex roots of $z^3+2z-1$ and $A$, $B$ and $C$ are complex coefficients that can be explicitely computed pursuing the partial fraction decomposition of $\FSS_f(z)$. In particular, the F-signature of $f$ is $\fs(f)=\frac{182139}{40118308}$.
	\end{example}

	%----------------------
	
	%BIBLIOGRAPHY
	%-------------------------------------------------------------------------------------------

	\bibliographystyle{siam}
	\bibliography{references}

\begin{thebibliography}{10}

\bibitem{AE05}
{\sc I.~Aberbach and F.~Enescu}, {\em The structure of {$F$}-pure rings}, Math.
  Z., 250 (2005), pp.~791--806.

\bibitem{AL03}
{\sc I.~M. Aberbach and G.~J. Leuschke}, {\em The {$F$}-signature and strong
  {$F$}-regularity}, Math. Res. Lett., 10 (2003), pp.~51--56.

\bibitem{BFS13}
{\sc A.~Benito, E.~Faber, and K.~E. Smith}, {\em Measuring singularities with
  {F}robenius: the basics}, in Commutative algebra, Springer, New York, 2013,
  pp.~57--97.

\bibitem{BST13}
{\sc M.~Blickle, K.~Schwede, and K.~Tucker}, {\em {$F$}-signature of pairs:
  continuity, {$p$}-fractals and minimal log discrepancies}, J. Lond. Math.
  Soc. (2), 87 (2013), pp.~802--818.

\bibitem{Boun57}
{\sc V.~Bouniakowsky}, {\em Sur les diviseurs numériques invariables des
  fonctions rationnelles entiéres}, Mém. Acad. Sc. St. Pétersbourg,  (1857),
  pp.~305--329.

\bibitem{BH98}
{\sc W.~Bruns and J.~Herzog}, {\em Cohen-{M}acaulay rings}, vol.~39 of
  Cambridge Studies in Advanced Mathematics, Cambridge University Press,
  Cambridge, 1993.

\bibitem{BuchC97}
{\sc R.-O. Buchweitz and Q.~Chen}, {\em Hilbert-{K}unz functions of cubic
  curves and surfaces}, J. Algebra, 197 (1997), pp.~246--267.

\bibitem{CDS19}
{\sc A.~Caminata and A.~De~Stefani}, {\em F-signature function of quotient
  singularities}, J. Algebra, 523 (2019), pp.~311--341.

\bibitem{CZ23}
{\sc A.~Caminata and F.~Zerman}, {\em {H}ilbert-{K}unz series and weak
  $p$-fractals}, in preparation,  (2023).

\bibitem{CH98}
{\sc L.~Chiang and Y.-C. Hung}, {\em On {H}ilbert-{K}unz functions of some
  hypersurfaces}, J. Algebra, 199 (1998), pp.~499--527.

\bibitem{EisenbudCommutativeAlgebra}
{\sc D.~Eisenbud}, {\em Commutative algebra}, vol.~150 of Graduate Texts in
  Mathematics, Springer-Verlag, New York, 1995.
\newblock With a view toward algebraic geometry.

\bibitem{Fedder83}
{\sc R.~Fedder}, {\em {$F$}-purity and rational singularity}, Trans. Amer.
  Math. Soc., 278 (1983), pp.~461--480.

\bibitem{GesselMonsky}
{\sc I.~M. Gessel and P.~Monsky}, {\em The limit as $p\to\infty$ of the
  {H}ilbert-{K}unz multiplicity of $\sum x_i^{d_i}$}, arXiv:1007.2004,  (2010).

\bibitem{Glassbrenner96}
{\sc D.~Glassbrenner}, {\em Strong {$F$}-regularity in images of regular
  rings}, Proc. Amer. Math. Soc., 124 (1996), pp.~345--353.

\bibitem{HanMon}
{\sc C.~Han and P.~Monsky}, {\em Some surprising {H}ilbert-{K}unz functions},
  Math. Z., 214 (1993), pp.~119--135.

\bibitem{Hoc22}
{\sc M.~Hochster}, {\em Tight closure and strongly {F}-regular rings}, Res.
  Math. Sci., 9 (2022), pp.~Paper No. 56, 26.

\bibitem{HH94}
{\sc M.~Hochster and C.~Huneke}, {\em Tight closure of parameter ideals and
  splitting in module-finite extensions}, J. Algebraic Geom., 3 (1994),
  pp.~599--670.

\bibitem{HR76}
{\sc M.~Hochster and J.~L. Roberts}, {\em The purity of the {F}robenius and
  local cohomology}, Advances in Math., 21 (1976), pp.~117--172.

\bibitem{HunLeu}
{\sc C.~Huneke and G.~J. Leuschke}, {\em Two theorems about maximal
  {C}ohen-{M}acaulay modules}, Math. Ann., 324 (2002), pp.~391--404.

\bibitem{Kun69}
{\sc E.~Kunz}, {\em Characterizations of regular local rings of characteristic
  {$p$}}, Amer. J. Math., 91 (1969), pp.~772--784.

\bibitem{Kun76}
\leavevmode\vrule height 2pt depth -1.6pt width 23pt, {\em On {N}oetherian
  rings of characteristic {$p$}}, Amer. J. Math., 98 (1976), pp.~999--1013.

\bibitem{MM23}
{\sc C.~Meng and A.~Mukhopadhyay}, {\em $h$-function, {H}ilbert-{K}unz density
  function and {F}robenius-{P}oincaré function}, arXiv:2310.10270,  (2013).

\bibitem{Mon83}
{\sc P.~Monsky}, {\em The {H}ilbert-{K}unz function}, Math. Ann., 263 (1983),
  pp.~43--49.

\bibitem{MonAlg}
\leavevmode\vrule height 2pt depth -1.6pt width 23pt, {\em Algebraicity of some
  {H}ilbert--{K}unz multiplicities (modulo a conjecture)}, arXiv:0907.2470,
  (2009).

\bibitem{MonTexI}
{\sc P.~Monsky and P.~Teixeira}, {\em {$p$}-fractals and power series. {I}.
  {S}ome 2 variable results}, J. Algebra, 280 (2004), pp.~505--536.

\bibitem{MonTexII}
\leavevmode\vrule height 2pt depth -1.6pt width 23pt, {\em {$p$}-fractals and
  power series. {II}. {S}ome applications to {H}ilbert-{K}unz theory}, J.
  Algebra, 304 (2006), pp.~237--255.

\bibitem{MunkresTopology}
{\sc J.~R. Munkres}, {\em Topology}, Prentice Hall, Inc., Upper Saddle River,
  NJ, second~ed., 2000.

\bibitem{Ohta17}
{\sc K.~Ohta}, {\em A function determined by a hypersurface of positive
  characteristic}, Tokyo J. Math., 40 (2017), pp.~495--515.

\bibitem{PolTuc}
{\sc T.~Polstra and K.~Tucker}, {\em {$F$}-signature and {H}ilbert-{K}unz
  multiplicity: a combined approach and comparison}, Algebra Number Theory, 12
  (2018), pp.~61--97.

\bibitem{SchwedeCentersofFpurity}
{\sc K.~Schwede}, {\em Centers of {$F$}-purity}, Math. Z., 265 (2010),
  pp.~687--714.

\bibitem{Shideler13}
{\sc S.~Shideler}, {\em The F-Signature of Diagonal Hypersurfaces}, Senior
  Thesis, Department of Mathematics, Princeton University, 2013.

\bibitem{Shideler18}
\leavevmode\vrule height 2pt depth -1.6pt width 23pt, {\em Limit F-Signature
  Functions of Diagonal Hypersurfaces}, Ph.D. Thesis, Department of
  Mathematics, University of Illinois at Chicago, 2018.

\bibitem{Sin05}
{\sc A.~K. Singh}, {\em The {$F$}-signature of an affine semigroup ring}, J.
  Pure Appl. Algebra, 196 (2005), pp.~313--321.

\bibitem{SVdB97}
{\sc K.~E. Smith and M.~Van~den Bergh}, {\em Simplicity of rings of
  differential operators in prime characteristic}, Proc. London Math. Soc. (3),
  75 (1997), pp.~32--62.

\bibitem{Trivedi18}
{\sc V.~Trivedi}, {\em Hilbert-{K}unz density function and {H}ilbert-{K}unz
  multiplicity}, Trans. Amer. Math. Soc., 370 (2018), pp.~8403--8428.

\bibitem{Trivedi23}
{\sc V.~Trivedi}, {\em The {H}ilbert-{K}unz density functions of quadric
  hypersurfaces}, Adv. Math., 430 (2023), pp.~Paper No. 109207, 63.

\bibitem{Tucker12}
{\sc K.~Tucker}, {\em {$F$}-signature exists}, Invent. Math., 190 (2012),
  pp.~743--765.

\bibitem{WatYos2000}
{\sc K.-i. Watanabe and K.-i. Yoshida}, {\em Hilbert--{K}unz multiplicity and
  an inequality between multiplicity and colength}, J. Algebra, 230 (2000),
  pp.~295--317.

\bibitem{WatYosh}
\leavevmode\vrule height 2pt depth -1.6pt width 23pt, {\em Minimal relative
  {H}ilbert--{K}unz multiplicity}, Illinois J. Math., 48 (2004), pp.~273--294.

\bibitem{WatYos05}
\leavevmode\vrule height 2pt depth -1.6pt width 23pt, {\em Hilbert--{K}unz
  multiplicity of three-dimensional local rings}, Nagoya Math. J., 177 (2005),
  pp.~47--75.

\end{thebibliography}

\end{document}